\numberwithin{equation}{section}  
\theoremstyle{plain}
\newtheorem{theorem}{Theorem}[section]
\newtheorem{lemma}[theorem]{Lemma}
\newtheorem{proposition}[theorem]{Proposition}
\newtheorem{question}[theorem]{Question}
\newtheorem{claim}[theorem]{Claim} 
\theoremstyle{definition}
\newtheorem{definition}[theorem]{Definition}
\newtheorem{remark}[theorem]{Remark}
\newcommand*{\rom}[1]{\expandafter\@slowromancap\romannumeral #1@}
 \colorlet{lgray}{white!80!black}
\colorlet{shadecolor}{gray!20}
\pgfplotsset{compat=1.9}
\tikzstyle{fleche}=[>=stealth', postaction={decorate}, thick]
\tikzstyle{axis}=[->, >=stealth', thick, gray]
\tikzstyle{paths}=[>->, >=stealth', thick]
\tikzstyle{path}=[->, >=stealth', thick]
\tikzstyle{grille}=[dotted, gray] 
\DeclareFontFamily{U}{mathx}{}
\DeclareFontShape{U}{mathx}{m}{n}{<-> mathx10}{}
\DeclareSymbolFont{mathx}{U}{mathx}{m}{n}
\DeclareMathAccent{\widehat}{0}{mathx}{"70}
\DeclareMathAccent{\widecheck}{0}{mathx}{"71}
\def \be {\begin{equation}}
\def \ee  {\end{equation}}
\def \lb {\left(}
\def \rb {\right)} 
\def \le {\left[}
\def \re {\right]}
\def \EE {\mathbb{E}}
\def \de {\delta}
\def \lnor {\left\lVert}
\def \rnor {\right\rVert_{TV}} 
\def \RR {\mathbb{R}}
\def \R {\mathbb{R}}
\def \ep {\varepsilon}
\def \NN {\mathbb{N}_0}
\def \d {\mathsf{d}}  
\def \e {\mathfrak{e}}
\def \f {\mathfrak{f}}
\def \eee{\mathbf{e}}
\def \fff{\mathbf{f}} 
\def \aa {\mathbf{a}}
\def \bb {\mathbf{b}}
\def \cc {\mathbf{c}}
\def \dd {\mathbf{d}}
\def \ZZ {\mathbb{Z}}
\def \RR {\mathbb{R}}
\def \CC {\mathbb{C}}
\def \th {\theta}
\def \k {\kappa} 
\def \up {\upsilon}
\def \G {\Gamma}
\def \one {\mathds{1}}
\def \i {\mathtt{i}}
\def \dtv {\d_{TV}}
\def \lv {\left\lvert}
\def \rv {\right\lvert}
\def \la {\lambda}
\DeclareMathOperator{\ber}{Ber} 
\DeclareMathOperator{\const}{Const}  
\DeclareMathOperator{\card}{card}
\begin{document}

\begin{frontmatter} 
\title{Limits of open ASEP stationary measures near a boundary} 
\runtitle{Open ASEP stationary measures near a boundary} 

\begin{aug} 
\author {\fnms{Zongrui}~\snm{Yang} \ead[label=e1]{zy2417@columbia.edu}} 
\orcid{0000-0003-2551-7965} 
\address {Department of Mathematics,
                Columbia University \printead[presep={,\ }]{e1}}
\end{aug}

\begin{abstract}
Consider the stationary measure of open asymmetric simple exclusion process (ASEP) on the lattice $\{1,\dots,n\}$. Taking $n$ to infinity while fixing the jump rates, this measure converges to a measure on the semi-infinite lattice. In the high and low density phases,  we characterize the limiting measure and provide bounds on the convergence rates in total variation distance.  Our approach involves bounding the total variation distance using generating functions, which are further estimated through a subtle analysis of the atom masses of Askey--Wilson signed measures.
\end{abstract}

\begin{keyword}[class=MSC]
\kwd[Primary ]{82C22}
\kwd{60K35}
\kwd[; secondary ]{82B23}
\end{keyword}

\begin{keyword}
\kwd{Open ASEP}
\kwd{Askey--Wilson signed measures}
\end{keyword}

\end{frontmatter}

\section{Introduction}
\subsection{Preface} 
The open asymmetric simple exclusion process (ASEP) serves as a fundamental model for nonequilibrium systems with open boundaries and for Kardar–Parisi–Zhang (KPZ) universality.  Over the past five decades, extensive studies have been dedicated to understand the stationary measure of open ASEP, encompassing a wide range of its asymptotic and limiting behaviors, including the particle densities  \cite{derrida1993exact,schutz1993phase,mallick1997finite,essler1996representations,sasamoto2000density,derrida2002exact,derrida2003exact,uchiyama2004asymmetric,bryc2019limit,wang2023askey},  limit fluctuations \cite{derrida2004asymmetric,bryc2019limit,wang2023askey}, large deviations \cite{derrida2002exact,derrida2003exact,bryc2017asymmetric} and open KPZ limits \cite{corwin2021stationary,bryc2021markov,barraquand2021steady,bryc2021markov2}. See survey papers \cite{blythe2007nonequilibrium,corwin2022some} and more references therein. A significant portion of these studies stems from the matrix product ansatz (MPA) method introduced in the seminal work \cite{derrida1993exact}. This method is notably related to the Askey--Wilson polynomials \cite{uchiyama2004asymmetric,corteel2011tableaux} and processes \cite{bryc2017asymmetric}.

We are interested in a straightforward limit of the open ASEP stationary measures near the boundary. Specifically, we will consider the stationary measure on the lattice $\{1,\dots,n\}$. We fix all the parameters $q,\alpha,\beta,\gamma,\delta$ of the model and take the system size $n$ to infinity. It is known from \cite{liggett1975ergodic,sasamoto2012combinatorics} that such sequences  weakly converge, and that the limiting probability measures on $\{0,1\}^{\ZZ_+}$   are stationary measures of certain ASEP systems on the semi-infinite lattice $\ZZ_+$  
 with parameters $q,\alpha,\gamma$. 
We mention that the stationary measures of this semi-infinite ASEP are not unique and are parameterized by the limiting densities at infinity. 
  The aforementioned limits from finite lattices $\{1,\dots,n\}$ to $\ZZ_+$  were first studied by Liggett \cite{liggett1975ergodic} assuming the so-called Liggett's condition, see for example \cite[Theorem 1.8 and Theorem 3.10]{liggett1975ergodic}. Later in Grosskinsky \cite[Theorem 3.2]{grosskinsky2004phase} and Sasamoto--Williams \cite{sasamoto2012combinatorics}, a matrix product ansatz was developed to characterize the limiting measures, enabling the studies of their large deviations in Duhart \cite{de2015large} and Duhart--Mörters--Zimmer \cite{duhart2018semi}. The limiting measures were further characterized in Bryc--Weso\l owski \cite[Theorem 12]{bryc2017asymmetric} in terms of the Askey--Wilson processes, in the `fan region' part of the phase diagram.

In this paper we achieve two main goals. Firstly, using the Askey--Wilson signed measures introduced in a recent work Wang--Weso{\l}owski--Yang \cite{wang2023askey}, we characterize the limiting probability measures on $\{0,1\}^{\ZZ_+}$ in the `shock region' part of the phase diagram. This complements the characterization Bryc--Weso\l owski \cite[Theorem 12]{bryc2017asymmetric} in the  fan region and could serve as a useful tool for further studies of the asymptotics. Secondly, we investigate the following natural question:
\begin{question}
    At which scale of the (leftmost) sublattice does this convergence occur in total variation distance? To be specific, when comparing the open ASEP stationary measure on $\{0,1\}^{n}$ with the limiting measure on $\{0,1\}^{\ZZ_+}$ by measuring the total variation distance of their marginal distributions on the sublattice $\{1,\dots,m_n\}$,  under which growth rate of the sequence $m_n$, $n=1,2,\dots$ does this total variation distance converge to zero? 
\end{question}

In the case $\gamma=\delta=0$ and within the low density phase of the shock region, a recent work by Nestoridi and Schmid \cite[Theorem 1.4]{nestoridi2023approximating} provides a growth rate. In this paper, we contribute another (partial) answer to this question: In both the high and low-density phases, the convergence occurs in total variation distance on the leftmost sublattice with a scale of $n/\log n$. We note that in the high density phase, the limiting measures on $\{0,1\}^{\mathbb{Z}_+}$ are in general no longer product Bernoulli measures (as in the low density phase), necessitating a different method.

Our approach involves bounding the total variation distance between two probability measures on $\{0,1\}^m$ by the values of their joint generating functions at specific points. To bound the generating functions, subtle estimations on the total variations of certain Askey–Wilson signed measures are necessary, which are derived through careful analysis of the masses of all the atoms.

\subsection{Model and results}\label{subsec:model and results}
The open ASEP is a continuous-time irreducible Markov process on the state space $\{0,1\}^n$ with parameters
\be\label{eq:conditions open ASEP}
\alpha,\beta>0,\quad \gamma,\de\geq 0,\quad 0\leq q<1,
\ee  
which models the evolution of particles on the lattice $\left\{1,\dots,n\right\}$. In the bulk of the system, particles move at random to the left with rate $q$ and to the right with rate $1$.
At the left boundary, particles enter at random with rate $\alpha$ and exit at random with rate $\gamma$. At the right boundary, particles enter at random with rate $\delta$ and exit at random with rate $\beta$. Any move of a particle is prohibited  if the target site is already occupied. See Figure \ref{fig:openASEP} for an illustration.  
 
\begin{figure}[ht]
\centering
 \begin{tikzpicture}[scale=0.8]
\draw[thick] (-1.2, 0) circle(1.2);
\draw (-1.2,0) node{reservoir};
\draw[thick] (0, 0) -- (12, 0);
\foreach \x in {1, ..., 12} {
	\draw[gray] (\x, 0.15) -- (\x, -0.15);
}
\draw[thick] (13.2,0) circle(1.2);
\draw(13.2,0) node{reservoir};
\fill[thick] (2, 0) circle(0.2);
\fill[thick] (5, 0) circle(0.2);
\fill[thick] (7, 0) circle(0.2);
\fill[thick] (10, 0) circle(0.2);
\draw[thick, ->] (2, 0.3)  to[bend left] node[midway, above]{$1$} (3, 0.3);
\draw[thick, ->] (5, 0.3)  to[bend right] node[midway, above]{$q$} (4, 0.3);
\draw[thick, ->] (7, 0.3) to[bend left] node[midway, above]{$1$} (8, 0.3);
\draw[thick, ->] (10, 0.3) to[bend left] node[midway, above]{$1$} (11, 0.3);
\draw[thick, ->] (10, 0.3) to[bend right] node[midway, above]{$q$} (9, 0.3);
\draw[thick, ->] (-0.1, 0.5) to[bend left] node[midway, above]{$\alpha$} (0.9, 0.4);
\draw[thick, <-] (0, -0.5) to[bend right] node[midway, below]{$\gamma$} (0.9, -0.4);
\draw[thick, ->] (12, -0.4) to[bend left] node[midway, below]{$\delta$} (11, -0.5);
\draw[thick, <-] (12, 0.4) to[bend right] node[midway, above]{$\beta$} (11.1, 0.5);
\node[gray] at (1,-0.4) {\tiny $1$};\node[gray] at (2,-0.4) {\tiny $2$};\node[gray] at (3,-0.4) {\tiny $3$};\node[gray] at (4,-0.4) {\tiny $4$};\node[gray] at (5,-0.4) {\tiny\dots};\node[gray] at (11,-0.4) {\tiny $n$};
\end{tikzpicture} 
\caption{Jump rates in the open ASEP.}
\label{fig:openASEP}
\end{figure}
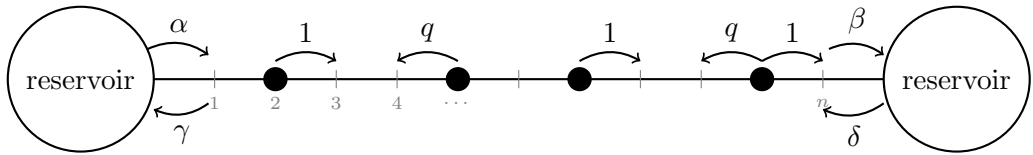 
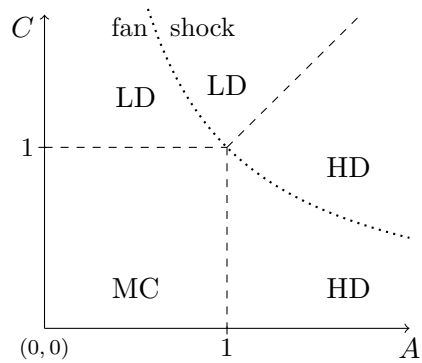
\begin{figure}[ht]
    \centering
    \begin{tikzpicture}[scale=0.8]
 \draw[scale = 1,domain=6.7:11,smooth,variable=\x,dotted,thick] plot ({\x},{1/((\x-7)*1/3+2/3)*3+5});
 \draw[->] (5,5) to (5,10.2);
 \draw[->] (5.,5) to (11,5);
   \draw[dashed] (5,8) to (8,8);
  \draw[dashed] (8,8) to (8,5);
   \draw[dashed] (8,8) to (10.2,10.2);
   \node [left] at (5,8) {\small$1$};
   \node[below] at (8,5) {\small $1$};
     \node [below] at (11,5) {$A$};
   \node [left] at (5,10) {$C$};
 \draw[dashed] (8,4.9) to (8,5.1);
  \draw[dashed] (4.9,8) to (5.1,8);
 \node [below] at (5,5) {\scriptsize$(0,0)$};
    \node [above] at (6.5,8.5) {LD};\node [above] at (8,8.7) {LD}; 
    \node [below] at (10,6) {HD};  \node [below] at (10,8) {HD};  
 \node [below] at (6.5,6) {MC}; 
 \node at (6.4,10) {\small{fan}};
 \node at (7.6,10) {\small{shock}};
 \end{tikzpicture} 
    \caption{Phase diagrams for the open ASEP stationary measures. LD, HD, MC respectively
stand for the low density, high density and maximal current phases.}
    \label{fig:phase}
\end{figure}

We will work with a re-parameterization of the open ASEP system by $A,B,C,D$ and $q$, where
\be\label{eq:defining ABCD}
A=\phi_+(\beta,\de),\quad B=\phi_-(\beta,\de),\quad C=\phi_+(\alpha,\gamma),\quad D=\phi_-(\alpha,\gamma), 
\ee 
and
\be \label{eq:phi}
\phi_{\pm}(x,y)=\frac{1}{2x}\lb1-q-x+y\pm\sqrt{(1-q-x+y)^2+4xy}\rb, \quad \mbox{for }\; x>0\mbox{ and }y\geq 0.
\ee 
The quantities $\frac{A}{1+A}$ and $\frac{1}{1+C}$ have nice
physical interpretations as the `effective densities' near the left and right boundaries of the system, see for example a survey   \cite[Section 6.2]{corwin2022some}.

One can check that \eqref{eq:defining ABCD} gives a bijection between \eqref{eq:conditions open ASEP} and 
\be\label{eq:conditions qABCD}
A,C\geq0,\quad -1<B,D\leq 0,\quad 0\leq q<1.
\ee 
We will assume \eqref{eq:conditions open ASEP} and consequently,  \eqref{eq:conditions qABCD} throughout the paper.

It is known since \cite{derrida1993exact} that as the system size $n\rightarrow\infty$, the asymptotic behavior of open ASEP is governed by parameters $A$ and $C$, which exhibits a phase diagram (Figure \ref{fig:phase}) involving three phases:
\begin{enumerate}
    \item [$\bullet$] (maximal current phase) $A<1$, $C<1$,
    \item [$\bullet$] (high density phase) $A>1$, $A>C$,
    \item [$\bullet$] (low density phase) $C>1$, $C>A$.
\end{enumerate} 

There are also two regions on the phase diagram distinguished by \cite{derrida2002exact,derrida2003exact}:
\begin{enumerate}
    \item [$\bullet$] (fan region) $AC<1$,
    \item [$\bullet$] (shock region) $AC>1$.
\end{enumerate}

We denote by $\mu_n:=\mu_n^{(A,B,C,D)}$ the (unique) stationary measure of open ASEP, which is a probability measure on $(\tau_1,\dots,\tau_n)\in\left\{0,1\right\}^n$, where
$\tau_i\in\left\{0,1\right\}$ is the occupation variable on site $i$, for $i=1,\dots,n$. For any $1\leq m\leq n$, the marginal distribution of $\mu_n$ on the sublattice $\{1,\dots,m\}$ is denoted by $\mu_{n|m}:=\mu_{n|m}^{(A,B,C,D)}$, which is a probability measure on $(\tau_1,\dots,\tau_m)\in\{0,1\}^m$. For any two probability measures $\k$ and $\k'$ on $\{0,1\}^m$, we denote their total variation distance by 
$$
\dtv(\k,\k'):=\frac{1}{2}\sum_{x\in\{0,1\}^m}|\k(x)-\k'(x)|=\max_{A\subseteq\{0,1\}^m} |\k(A)-\k'(A)|.
$$
We equip $\{0,1\}^{\ZZ_+}$ with the infinite product $\sigma$-algebra. The measure $\mu_n$ on $\{0,1\}^n$ can be regarded as a probability measure on $\{0,1\}^{\ZZ_+}$ by setting $\tau_i=0$ for $i\geq n+1$, which we also denote by $\mu_n$. As mentioned in the preface, we will  characterize the weak limit $\mu_{\infty}$ of measures $\mu_n$ as $n\rightarrow\infty$. We will also provide a bound of total variation distance between probability measures $\mu_{n|m}$ and the marginal of $\mu_{\infty}$ on the sublattice $\{0,1\}^m$, for  $1\leq m\leq n$. This total variation bound will in particular imply that for  sequences $m_n$, $n=1,2,\dots$ of growth rate $n/\log n$, the total variation distance tends to $0$.

We now state our main theorem in the low density phase:
\begin{theorem}\label{thm:LD}
    In the low density phase $C>A$, $C>1$, as $n\rightarrow\infty$, the open ASEP stationary measure $\mu_n$ weakly converges to the product Bernoulli measure on the semi-infinite lattice $\ZZ_+$ with density $\frac{1}{1+C}$. 
    
    We furthermore assume that $A/C\notin\{q^l:l\in\ZZ_+\}$ if $A\geq1$. Then there exists $H>0$  depending on $A,B,C,D$ and $q$ and $\th\in(0,1)$ depending on $A,C$ and $q$ such that for any $1\leq m\leq n$ we have:
     \be\label{eq:bound LD}
    \dtv\lb\mu_{n|m},\ber_m\lb\frac{1}{1+C}\rb\rb\leq \th^n (Hm)^{3m},
    \ee 
    where we use $\ber_k(\rho)$ to denote the product Bernoulli measure on the lattice $\{1,\dots,k\}$ with density $\rho$, for $ \rho\in[0,1]$ and $k\in\ZZ_+$.    
    
    As a corollary of the bound \eqref{eq:bound LD} above, in view of Lemma \ref{lem:tech},  there exists $s>0$ depending on $A,C$ and $q$ such that for any sequence $\{m_n\}_{n=1}^{\infty}$  satisfying  $m_n\leq s\frac{n}{\log n}$, the total variation distance between $\mu_{n|m_n}$ and $\ber_{m_n}\lb\frac{1}{1+C}\rb$ converges to zero as $n\rightarrow\infty$.
\end{theorem}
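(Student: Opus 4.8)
The plan is to derive all three assertions from the total-variation estimate \eqref{eq:bound LD}, and to prove \eqref{eq:bound LD} by comparing joint generating functions. For probability measures $\k,\k'$ on $\{0,1\}^m$ the function $G_\k(t_1,\dots,t_m)=\EE_\k\bigl[\prod_{i=1}^{m}t_i^{\tau_i}\bigr]$ is multilinear, each $t_i^{\tau_i}$ being affine in $t_i$; writing $G_\k[T]$ for its value at $t_i=1$ for $i\in T$ and $t_i=0$ for $i\notin T$ --- equivalently $G_\k[T]=\mathbb P_\k(\tau_i=0\text{ for all }i\notin T)$ --- every point mass of $\k$ is recovered from the $2^m$ numbers $G_\k[T]$, $T\subseteq\{1,\dots,m\}$, by inclusion--exclusion, which gives the elementary bound
\be
\dtv(\k,\k')\ \leq\ 4^{m}\max_{T\subseteq\{1,\dots,m\}}\bigl|\,G_\k[T]-G_{\k'}[T]\,\bigr|.
\ee
Since $G_{\k'}[T]=\bigl(\tfrac{C}{1+C}\bigr)^{m-|T|}$ for $\k'=\ber_m\bigl(\tfrac1{1+C}\bigr)$, it suffices to show that $\bigl|\,G_{\mu_{n|m}}[T]-\bigl(\tfrac{C}{1+C}\bigr)^{m-|T|}\,\bigr|\leq\const\cdot\th^{n}(Hm)^{3m}$ uniformly over $T\subseteq\{1,\dots,m\}$ and $1\leq m\leq n$; absorbing the constant and the $4^m$ into $H$ then yields \eqref{eq:bound LD}. (When $\th^{n}(Hm)^{3m}\geq1$ the bound is trivial, so one may assume throughout that $n$ is large compared with $m$.)

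To control $G_{\mu_{n|m}}[T]$ I would use the matrix product ansatz together with the Askey--Wilson signed measures of \cite{wang2023askey}. Writing $C=D+E$ for the DEHP generators, $G_{\mu_{n|m}}[T]=\langle W|\,Y_1\cdots Y_m\,C^{n-m}\,|V\rangle\big/\langle W|C^{n}|V\rangle$ with $Y_i=C$ for $i\in T$ and $Y_i=E$ for $i\notin T$. In the Askey--Wilson representation there are a function $\lambda$ and an Askey--Wilson signed measure $\nu$ attached to $(A,B,C,D,q)$ with $\langle W|C^{n}|V\rangle=\int\lambda(x)^{n}\,\nu(\d x)$; the numerator $\langle W|Y_1\cdots Y_m C^{n-m}|V\rangle$ has an analogous representation in which the $m$ inserted factors $Y_1,\dots,Y_m$ expand, via the combinatorics of at most $m$ matrix-product factors, into a finite sum of integrals against $\nu$ and against related Askey--Wilson signed measures (the $E$'s producing parameter shifts). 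In the low density phase $C>A$, $C>1$, the measure $\nu$ carries an atom at a point $x_\star$ with $\lambda(x_\star)=\lambda_{\max}:=\sup_{x\in\mathrm{supp}\,\nu}|\lambda(x)|$ and strictly positive mass $w_\star$, and --- this is where the hypothesis $A/C\notin\{q^l:l\in\ZZ_+\}$ for $A\geq1$ enters --- every other point of $\mathrm{supp}\,\nu$ satisfies $|\lambda|\leq\th\lambda_{\max}$ for some $\th\in(0,1)$ depending only on $A,C,q$. At $x_\star$ the generators $D$ and $E$ act as the scalars $\tfrac1{1+C}\lambda_{\max}$ and $\tfrac{C}{1+C}\lambda_{\max}$, so the $m$ inserted factors contribute the scalar $\lambda_{\max}^{m}\bigl(\tfrac{C}{1+C}\bigr)^{m-|T|}$; hence $x_\star$ contributes $w_\star\lambda_{\max}^{n}\bigl(\tfrac{C}{1+C}\bigr)^{m-|T|}$ to the numerator and $w_\star\lambda_{\max}^{n}$ to the denominator, and $G_{\mu_{n|m}}[T]-\bigl(\tfrac{C}{1+C}\bigr)^{m-|T|}$ equals the quotient of a fixed combination of the non-$x_\star$ contributions (to numerator and denominator) by the full denominator.

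The core of the proof --- and the step I expect to be the main obstacle --- is to bound those non-$x_\star$ contributions, divided by the denominator (which is $\asymp w_\star\lambda_{\max}^{n}$), by $\th^{n}(Hm)^{3m}$ uniformly in $n$ and $m$. This demands a delicate analysis of \emph{all} the atoms of the Askey--Wilson signed measures that occur: in the shock region these are genuine signed measures, and both the total variation $\|\nu\|_{\tv}$ and the Askey--Wilson polynomials at the atoms may be large, so one must track every atom's location and signed mass. Concretely, I would combine (i) estimates for $\|\nu\|_{\tv}$ and for the gap between $\lambda_{\max}$ and the next-largest value of $|\lambda|$ on the support, which supply the factor $\th^{n}$; (ii) crude bounds for the Askey--Wilson polynomials and for the three-term recurrence coefficients, together with the count of the matrix-product terms on $\leq m$ sites, which between them account for the super-exponential factor $(Hm)^{3m}$ (presumably far from optimal); and (iii) control --- again through the non-resonance hypothesis --- of how the parameter shifts caused by the inserted $E$'s displace the atoms, uniformly in $m$. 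Substituting these bounds into the quotient of the previous paragraph gives \eqref{eq:bound LD}.

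Finally, the first and third assertions follow quickly. Fixing $m$ and letting $n\to\infty$ in \eqref{eq:bound LD} shows that every finite-dimensional marginal of $\mu_n$ converges in total variation to that of the product Bernoulli measure with density $\tfrac1{1+C}$ on $\ZZ_+$, which is the claimed weak convergence (the resonant parameters $A/C=q^l$ with $A\geq1$ being covered either by the same argument with a mildly modified formula, or by the classical results \cite{liggett1975ergodic,sasamoto2012combinatorics}). For the last assertion, if $m_n\leq s\,n/\log n$ then
\be
\log\bigl(\th^{n}(Hm_n)^{3m_n}\bigr)=n\log\th+3m_n\log(Hm_n)\ \leq\ n\log\th+\frac{3sn}{\log n}\log(Hsn)\ =\ n\bigl(\log\th+3s\bigr)+o(n),
\ee
which tends to $-\infty$ as soon as $3s<\log(1/\th)$; taking such an $s$ --- the content of Lemma~\ref{lem:tech} --- gives $\dtv\bigl(\mu_{n|m_n},\ber_{m_n}(\tfrac1{1+C})\bigr)\to0$, which is the third assertion.
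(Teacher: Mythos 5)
Your high-level architecture is the same as the paper's: reduce the total variation distance to a comparison of joint generating functions, isolate the contribution of the dominant atom of the relevant Askey--Wilson signed measure (which produces the product Bernoulli limit with density $\frac{1}{1+C}$ and the ratio $\th$ as the gap to the second-largest atom), and absorb everything else into $\th^n(Hm)^{3m}$; your deduction of the $n/\log n$ rate at the end is exactly Lemma \ref{lem:tech}. But there are genuine gaps. First, your reduction evaluates the generating function at $t_i\in\{0,1\}$ by inclusion--exclusion, whereas the Askey--Wilson signed-measure representation of the generating function (Theorem \ref{thm:characterization}) is only valid for $t_1\leq\dots\leq t_m$ in a right-neighborhood $[1,1+\ep)$ of $1$; at $t_i=0$ the representation you invoke (bare $E$-insertions ``producing parameter shifts'') is not available in this framework and would itself require a nontrivial construction. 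The paper instead proves a different elementary inequality (Proposition \ref{prop:bounding distance two prob measures1}) bounding $\dtv$ by $2^m$ evaluations at points $t_{i,\up}=1+(i+\up)\ep/(2m)$, all inside $[1,1+\ep)$, at the cost of the factor $\prod_i(1+t_{i,1})/(t_{i,1}-t_{i,0})$ of order $(6m/\ep)^m$, which is one of the sources of $(Hm)^{3m}$. Second --- and you flag this yourself --- the entire quantitative content of the theorem is the uniform-in-$m$ control of the non-dominant atoms, and your proposal leaves this as a plan. In the paper this is Proposition \ref{prop:TV bound of AW signed measures1} (the bound $\lnor P_{s,t}(x,\cdot)\rnor\leq K/(t-s)^2$ uniformly over $x\in U_s$, proved by a case analysis of every linear factor $1-q^l\f/\e$ in the atom-mass formulas) together with Proposition \ref{prop:bound on pi11}; without these the factor $(Hm)^{3m}$ cannot actually be extracted.

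Two further omissions. The theorem makes no assumption excluding $q^lABCD=1$, but at such parameters $(ABCD)_\infty=0$ and the atom-mass formulas \eqref{eq: p_0}--\eqref{eq: p_j} degenerate, so the quotient of non-dominant contributions by the denominator is not directly controlled; the paper handles this by a continuity argument in $D$ (Lemma \ref{lem:continuity}) combined with the uniform bound of Proposition \ref{prop:bound on pi11}, and your proof would need an analogous step. Also, the paper does not attack the low density phase head-on: Theorem \ref{thm:characterization} naturally controls the marginal on the \emph{last} $m$ sites, so the paper proves the estimate for that marginal in the \emph{high} density phase --- where $P_{s,t}(y_0(s),\cdot)=\delta_{y_0(t)}$ makes the limit product Bernoulli --- and transfers it to $\mu_{n|m}$ in the low density phase by particle-hole duality; your direct matrix-product formula for the first $m$ sites would require a time-reversed representation, which amounts to the same detour. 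The weak convergence without the hypothesis $A/C\notin\{q^l\}$ is obtained in the paper by stochastic sandwiching in the boundary rates, not by a ``mildly modified formula''; your fallback to the classical references is plausible but not worked out.
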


 In the high density phase,  as mentioned in the preface, the limiting measure is in general no longer product Bernoulli. Instead, the limiting measure will be characterized in the next definition  in terms of the Askey--Wilson signed measures introduced in \cite{wang2023askey}.

\begin{definition}\label{def:HD limit measure}
    In the high density phase $A>1$, $A>C$, we assume that $C/A\notin\{q^l:l\in\ZZ_+\}$ if $C\geq1$. We define probability measures $\la_m$ on $\{0,1\}^m$ for $m\in\ZZ_+$ by their joint generating functions: For some $\ep>0$ and for any $t_1\leq\dots\leq t_m$ within the interval $(1-\ep,1]$,
\be\label{eq:lambda}
\EE_{\la_m}\le\prod_{i=1}^mt_i^{\tau_i}\re=\frac{A^m}{(1+A)^{2m}}\int_{\RR^m}\prod_{i=1}^m\lb1+t_i+2\sqrt{t_i}x_i\rb
        \pi^{(A,1/A,C,D)}_{t_1,\dots,t_m}\lb\d x_1,\dots,\d x_m\rb,
\ee
where $\pi_{t_1,\dots,t_m}\lb\d x_1,\dots,\d x_m\rb$ on the RHS above is the multi-dimensional Askey--Wilson signed measure (see Section \ref{subsec:Backgrounds} for a brief review).
We will prove in Theorem \ref{thm:HD} below that there exists $\ep>0$ depending on $A,B,C,D$ and $q$ such that for all $m\in\ZZ_+$, the expression on the RHS above is indeed the generating function of a probability measure $\la_m$ on $\{0,1\}^m$, and that the marginal distribution of $\la_{m+1}$ on the leftmost sublattice $\{1,\dots,m\}$ coincides with $\la_m$. We define the probability measure $\lambda$ on $\{0,1\}^{\ZZ_+}$ by requiring that its marginal distribution on the sublattice $\{1,\dots,m\}$ equals $\lambda_m$ for all $m = 1, 2, \dots$.
\end{definition}

 The next result is our main theorem in the high density phase.

\begin{theorem}\label{thm:HD}
In the high density phase $A>1$, $A>C$, we assume that $C/A\notin\{q^l:l\in\ZZ_+\}$ if $C\geq1$.
Then the probability measures $\la_m$ on $\{0,1\}^m$ for $m\in\ZZ_+$ and the probability measure $\la$ on $\{0,1\}^{\ZZ_+}$ in Definition \ref{def:HD limit measure} are well-defined.  
 Furthermore, as $n\rightarrow\infty$, the open ASEP stationary measure $\mu_n$ on $\{0,1\}^n$ weakly converges to the measure $\la$ on $\{0,1\}^{\ZZ_+}$. 

Moreover,  there exists $H>0$  depending on $A,B,C,D$ and $q$ and $\th\in(0,1)$ depending on $A,C$ and $q$ such that for any $1\leq m\leq n$ we have:
   \be\label{eq:bound HD}
    \dtv\lb\mu_{n|m},\la_m\rb\leq \th^n (Hm)^{3m},
   \ee
   
    As a corollary of the bound \eqref{eq:bound HD} above, in view of Lemma \ref{lem:tech},  there exists $s>0$ depending on $A,C$ and $q$ such that for any sequence $\{m_n\}_{n=1}^{\infty}$  satisfying  $m_n\leq s\frac{n}{\log n}$, the total variation distance between $\mu_{n|m_n}$ and $\la_{m_n}$ converges to zero as $n\rightarrow\infty$.
\end{theorem}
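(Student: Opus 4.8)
The plan is to deduce the entire statement --- that the measures $\la_m,\la$ of Definition~\ref{def:HD limit measure} are well defined, the weak convergence $\mu_n\Rightarrow\la$, the bound \eqref{eq:bound HD}, and the corollary --- from a single quantitative comparison of joint generating functions near $t=(1,\dots,1)$, following the scheme of Theorem~\ref{thm:LD} but with the extra task of identifying the (non-product) limit.

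\textbf{Step 1 (a finite-size identity).} I would start from the Askey--Wilson signed matrix ansatz of \cite{wang2023askey}, which represents the open ASEP stationary weights through the multidimensional Askey--Wilson signed measure $\pi^{(A,B,C,D)}$. For $n\geq m$ and $t_1\leq\dots\leq t_m$ in a fixed right neighbourhood of $1$, freezing $t_{m+1}=\dots=t_n=1$ collapses the last $n-m$ Askey--Wilson transition steps into one power, giving a formula of the schematic form
\be\label{eq:plan-finite}
\EE_{\mu_{n|m}}\left[\prod_{i=1}^m t_i^{\tau_i}\right]=\frac{1}{Z_n}\int_{\RR^{m+1}}\left(\prod_{i=1}^m\bigl(1+t_i+2\sqrt{t_i}\,x_i\bigr)\right)\bigl(2+2x_{m+1}\bigr)^{\,n-m}\,\pi^{(A,B,C,D)}_{t_1,\dots,t_m,1}(\d x_1,\dots,\d x_{m+1}),
\ee
where $x_{m+1}$ records the process at the frozen time $1$, its marginal under $\pi^{(A,B,C,D)}_{t_1,\dots,t_m,1}$ is the one-dimensional Askey--Wilson signed measure $\nu=\nu^{(A,B,C,D,q)}$, and $Z_n=\int_{\RR}(2+2x)^n\,\nu(\d x)$; all multiplicative constants depend only on $q,A,B,C,D$ and cancel in the ratio. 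The point is that the $n$-dependence lives only in $Z_n$ and in the geometric factor $(2+2x_{m+1})^{n-m}$.

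\textbf{Step 2 (asymptotics; well-definedness; weak convergence).} Since $A>1>q$ and $A>C$, the largest point of the support of $\nu$ is the atom $x^\ast=\tfrac12(A+A^{-1})$: it beats the continuous part $[-1,1]$ (indeed $2+2x^\ast=(1+A)^2/A>4$ for $A\neq1$), it beats the other $A$-atoms $\tfrac12(Aq^j+A^{-1}q^{-j})$, $j\geq1$ (as $1<Aq^j\leq A$ and $x\mapsto x+x^{-1}$ increases on $(1,\infty)$), and it beats all $C$-atoms (as $A>C$). The hypothesis $C/A\notin\{q^l:l\in\ZZ_+\}$ guarantees that no $A$-atom meets a $C$-atom, so every atom is simple with an explicit nonzero mass, and the target measure $\pi^{(A,1/A,C,D)}$ is non-degenerate. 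Writing $x^\dagger<x^\ast$ for the second largest point of the support (with $x^\dagger=1$ if there is no atom in $(1,x^\ast)$), $\th:=\tfrac{1+x^\dagger}{1+x^\ast}\in(0,1)$, and $c_\ast>0$ for the $\nu$-mass of $x^\ast$, one gets $Z_n=(2+2x^\ast)^n\bigl(c_\ast+O(\th^{\,n})\bigr)$; extracting from \eqref{eq:plan-finite} the contribution of $x_{m+1}=x^\ast$ and using $2+2x^\ast=(1+A)^2/A$ yields, uniformly over the ordered tuples $\vec t$ near $1$,
\be\label{eq:plan-limit}
\EE_{\mu_{n|m}}\left[\prod_{i=1}^m t_i^{\tau_i}\right]=\frac{A^m}{(1+A)^{2m}}\int_{\RR^m}\prod_{i=1}^m\bigl(1+t_i+2\sqrt{t_i}\,x_i\bigr)\,\widehat\pi_{t_1,\dots,t_m}(\d x_1,\dots,\d x_m)+R_{n,m}(\vec t),
\ee
where $\widehat\pi_{t_1,\dots,t_m}$ is the law of $\pi^{(A,B,C,D)}_{t_1,\dots,t_m,1}$ conditioned on $x_{m+1}=x^\ast$. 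I would then carry out a direct residue computation with the explicit Askey--Wilson transition kernels to identify $\widehat\pi_{t_1,\dots,t_m}=\pi^{(A,1/A,C,D)}_{t_1,\dots,t_m}$: conditioning the Askey--Wilson process at the frozen endpoint on its principal $A$-atom amounts to replacing the boundary parameter $B\in(-1,0]$ by $1/A\in(0,1)$. Hence the leading term of \eqref{eq:plan-limit} equals the right-hand side of \eqref{eq:lambda}, so $\EE_{\mu_{n|m}}[\prod t_i^{\tau_i}]$ converges to it; since $\{0,1\}^m$ is finite, the cell probabilities converge, whence $\la_m$ is a genuine probability measure with the asserted generating function, $\la_{m+1}|_{\{1,\dots,m\}}=\la_m$ follows from $\mu_{n|m+1}|_{\{1,\dots,m\}}=\mu_{n|m}$, $\la$ is well defined, and $\mu_n\Rightarrow\la$ holds cylinder by cylinder. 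Finally I would establish $\bigl\lvert R_{n,m}(\vec t)\bigr\rvert\leq(H_1m)^{2m}\th^{\,n}$ for some $H_1>0$ depending on $q,A,B,C,D$: the factor $\th^{\,n}$ is the geometric gain from $x^\dagger$, and the $m$-dependent factor comes from bounding, uniformly over ordered $\vec t$ near $1$, the total variations of the multidimensional Askey--Wilson signed measures $\pi^{(A,B,C,D)}_{t_1,\dots,t_m,1}$ and $\widehat\pi_{t_1,\dots,t_m}$ --- controlled superpositions of their atom masses (enumerated explicitly, with both signs) and absolutely continuous parts --- together with the uniform boundedness of $\prod_{i=1}^m(1+t_i+2\sqrt{t_i}x_i)$ on their supports.

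\textbf{Step 3 (total variation; the corollary).} By Step~2, for $1-\ep<t_1\leq\dots\leq t_m\leq1$ the joint generating functions of $\mu_{n|m}$ and $\la_m$ differ by at most $(H_1m)^{2m}\th^{\,n}$. I would then invoke a deterministic interpolation estimate --- using that a measure on $\{0,1\}^m$ has a multilinear generating function, evaluating on a tensor grid of $2m$ ordered points $1-\ep<s_1<s_1'\leq s_2<s_2'\leq\dots\leq s_m<s_m'\leq1$ with consecutive gaps $\geq\ep/(2m)$, and inverting the associated $2^m\times2^m$ tensor-product Vandermonde system --- which recovers each cell probability with coefficients of size $\leq(H_2m)^{m}$, whence
\be\label{eq:plan-tv}
\dtv\bigl(\mu_{n|m},\la_m\bigr)\leq(H_2m)^{m}\cdot(H_1m)^{2m}\,\th^{\,n}\leq(Hm)^{3m}\,\th^{\,n},
\ee
which is \eqref{eq:bound HD}. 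Taking logarithms, $\log\dtv\bigl(\mu_{n|m_n},\la_{m_n}\bigr)\leq n\log\th+3m_n\log(Hm_n)$, so when $m_n\leq s\,n/\log n$, Lemma~\ref{lem:tech} shows that for $s>0$ small enough --- depending on $\log(1/\th)$, hence only on $A,C,q$ --- the right-hand side tends to $-\infty$, giving the corollary.

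\textbf{Main obstacle.} The crux is the uniform estimate $\bigl\lvert R_{n,m}(\vec t)\bigr\rvert\leq(H_1m)^{2m}\th^{\,n}$ in Step~2: one must bound the total variations of the multidimensional Askey--Wilson \emph{signed} measures in \eqref{eq:plan-finite}--\eqref{eq:plan-limit}, and the discrepancy between the ``$B$-version'' and the ``$1/A$-version'', with the correct growth in $m$ and a genuine geometric gain $\th^{\,n}$, \emph{uniformly over all ordered} $\vec t$ in $(1-\ep,1]$. The obstruction is that these measures carry, in general, many atoms --- the $A$-atoms, and when $C\geq1$ also the $C$-atoms, propagated through the $m$-fold Askey--Wilson transition --- whose positions and masses vary with $\vec t$ and may nearly coincide; this is precisely why one must impose $C/A\notin\{q^l:l\in\ZZ_+\}$, and why the proof requires a careful case-by-case analysis of all the atom masses alongside matching control of the absolutely continuous parts. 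The residue computation identifying $\widehat\pi$ with $\pi^{(A,1/A,C,D)}$ --- i.e.\ the replacement $B\mapsto1/A$ --- is a further, but comparatively routine, ingredient.
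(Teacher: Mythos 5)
Your plan is correct in substance and relies on the same analytic core as the paper --- domination by the top atom $x^\ast=\tfrac12(A+A^{-1})$ giving the geometric factor $\th^n$, the bound $\lnor P_{s,t}(x,\cdot)\rnor\leq K/(t-s)^2$ applied over a grid with gaps $\ep/(2m)$ to produce the $(Hm)^{2m}$ factor (Proposition \ref{prop:TV bound of AW signed measures1}), the interpolation/Vandermonde inversion converting generating-function control into a total variation bound (Proposition \ref{prop:bounding distance two prob measures1}), and Lemma \ref{lem:tech} for the corollary. Where you genuinely diverge is in the organization: you work directly with the first-$m$-sites marginal $\mu_{n|m}$, freeze the trailing times at $1$, and identify the limit by conditioning the multi-time Askey--Wilson signed measure on its \emph{terminal} coordinate sitting at $x^\ast$. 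The paper instead proves everything for the \emph{last} $m$ sites (Theorem \ref{thm:modulo particle hole}), where the frozen time sits at the \emph{beginning} of the time sequence, so that conditioning on the top atom is just the forward evaluation $P_{1,t_1}(y_0(1),\cdot)=\pi_{t_1}^{(A,B,C,1/C)}$; it then transfers to $\mu_{n|m}$ in the high density phase via particle-hole duality (Lemma \ref{lem:particle hole}) and the time-reversal symmetry of Askey--Wilson signed measures (Proposition \ref{prop:time reversal1}). Your route buys a more direct statement with no duality bookkeeping; the paper's route buys the fact that one only ever needs the Askey--Wilson representation for times in $[1,1+\ep)$, where Theorem \ref{thm:characterization} is proved, and that the top-atom conditioning is a one-line kernel evaluation rather than a backward conditioning.

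Two points in your plan need more care than you give them. First, your ``direct residue computation'' showing that conditioning at the frozen endpoint replaces $B$ by $1/A$ is not routine: the multi-time measure is defined as a forward product $\pi_{t_1}(\d x_1)P_{t_1,t_2}(x_1,\d x_2)\cdots$, and for a \emph{signed} measure there is no automatic disintegration with respect to the last coordinate into backward kernels; the identity you need is precisely the time-reversal symmetry, which the paper proves in Appendix \ref{sec:time reversal} using the orthogonality and projection formulas for Askey--Wilson polynomials together with Stone--Weierstrass and Riesz representation. (The identification itself is correct: reversing time sends $\pi^{(A,B,C,D)}_{t_1,\dots,t_m,1}$ to $\pi^{(C,D,A,B)}_{1,1/t_m,\dots,1/t_1}$, and $P^{(C,D)}_{1,s}(x^\ast,\d y)=\pi_s^{(C,D,A,1/A)}(\d y)$, which reverses back to $\pi^{(A,1/A,C,D)}_{t_1,\dots,t_m}$.) Second, your Step 1 says the $t_i$ lie in a \emph{right} neighbourhood of $1$, but with $t_{m+1}=\dots=t_n=1$ the ordering $t_1\leq\dots\leq t_n$ forces $t_1\leq\dots\leq t_m\leq 1$; you implicitly correct this in Step 3, but you must then verify that the Askey--Wilson representation and the kernel bounds remain valid for times in $(1-\ep,1]$ in the high density phase (the analogue of the parameter-in-$\Omega$ check in the proof of Theorem \ref{thm:characterization}), which is exactly the verification the paper's duality detour is designed to avoid.
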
 

Theorem \ref{thm:LD} and Theorem \ref{thm:HD} above will be proved in Section \ref{subsec:proof of theorems}.

\begin{remark}
    We note that Theorem \ref{thm:LD} and Theorem \ref{thm:HD}, which respectively concern the low density and high density phases, are not related by the particle-hole duality (Lemma \ref{lem:particle hole}) of the open ASEP stationary measure. The particle-hole dual of Theorem \ref{thm:LD} in the low density phase would correspond to a statement in the high density phase, but for the limit as $n\rightarrow\infty$ of a certain transformation of the open ASEP stationary measure---that is, the measure $\widetilde{\mu_n}$ on $\{0,1\}^n$ defined by
    $\widetilde{\mu_n}(\tau_1,\dots,\tau_n)=\mu_n(1-\tau_n,\dots,1-\tau_1)$ for any $\tau_1,\dots,\tau_n\in\left\{0,1\right\}$, where $\mu_n$ is the open ASEP stationary measure. The two limiting measures on $\{0,1\}^{\ZZ_+}$ obtained respectively from $\widetilde{\mu_n}$ and $\mu_n$ on $\{0,1\}^n$ are fundamentally different.
\end{remark}

\begin{remark}
In our bounds \eqref{eq:bound LD} and \eqref{eq:bound HD} of the total variation distance, the constant $\th\in(0,1)$ can be given by
   $$\th=\frac{2+\max\lb2, qC+\lb qC\rb^{-1}, \max \lb A,1\rb+\max\lb A,1\rb^{-1} 
   \rb}{2+ C+C^{-1} }$$
   in the low density phase and by the same formula with $A$ and $C$ swapped in the high density phase. 
The constant $s>0$ appearing in the growth rate $m_n\leq sn/\log n$ can be given by $s=-(\log\th)/3$. These constants can be observed from our proofs of the main theorems in Section \ref{subsec:proof of theorems}.
   
\end{remark}
\begin{remark}
    Our total variation distance bounds \eqref{eq:bound LD} and \eqref{eq:bound HD} are not expected to be optimal. Hence the growth rate  $n/\log n$ of sequences $\{m_n\}_{n=1}^{\infty}$ induced by those bounds are also not optimal. It would be an interesting question to ask for the optimal growth rate. Moreover, in the maximal current phase $A<1$, $C<1$,  we do not know how to use our methods to obtain a total variation distance bound that is not too loose and that yields a growth rate of $\{m_n\}_{n=1}^{\infty}$ that is not too slow. Hence we do not cover this phase in the present paper. It remains an interesting question to find effective total variation distance bounds and the optimal growth rate of $\{m_n\}_{n=1}^{\infty}$ in the maximal current phase.  
\end{remark}

The next result determines exactly when the limiting measure $\lambda$ on $\{0,1\}^{\ZZ_+}$ in the high density phase is a product Bernoulli measure.

\begin{proposition}\label{prop:product Bernoulli} Assume the same conditions as in Definition \ref{def:HD limit measure}, i.e., we are in the high density phase $A>1$, $A>C$, and that $C/A \notin \{q^l : l \in \ZZ_+\}$ if $C \geq 1$. Then the measure $\lambda$ on $\{0,1\}^{\ZZ_+}$ introduced in Definition \ref{def:HD limit measure} is a product Bernoulli measure if and only if $AC = 1$, in which case it has density $A/(1+A)$. \end{proposition}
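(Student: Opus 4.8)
The plan is to reduce the statement to the joint generating functions in \eqref{eq:lambda}. The measure $\la$ on $\{0,1\}^{\ZZ_+}$ is product Bernoulli if and only if each marginal $\la_m$ is a product measure, i.e. for every $m$ the polynomial on the right-hand side of \eqref{eq:lambda} factors as $\prod_{i=1}^m\lb1-\rho+\rho t_i\rb$ for a single $\rho\in[0,1]$. A product Bernoulli measure is determined by its common density, which equals its density at infinity; and the density at infinity of $\la$ is the bulk density $\frac{A}{1+A}$ of open ASEP in the high density phase (by the classical density computations for open ASEP, together with the weak convergence $\mu_n\to\la$ from Theorem \ref{thm:HD} and the exponential decay of the left boundary layer when $A>C$). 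So if $\la$ is product Bernoulli it must equal $\ber\lb\frac{A}{1+A}\rb^{\otimes\infty}$; in particular $\la_1=\ber\lb\frac{A}{1+A}\rb$, i.e. the $m=1$ instance of \eqref{eq:lambda} must read $\EE_{\la_1}[t^{\tau_1}]=\frac{1+At}{1+A}$.

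For necessity it thus suffices to evaluate $\la_1(\tau_1=1)$ from \eqref{eq:lambda} with $m=1$ and to show it equals $\frac{A}{1+A}$ only when $AC=1$. Using the explicit one-dimensional Askey--Wilson signed measure $\pi^{(A,1/A,C,D)}_{t_1}$ from \cite{wang2023askey} — its atomic and absolutely continuous parts and its first two moments — the normalization (total mass one) forces one moment identity, and the coefficient of $t_1$ gives $\la_1(\tau_1=1)$ as an explicit rational function of $A,C,D,q$; setting it equal to $\frac{A}{1+A}$ and clearing denominators, the resulting equation reduces to $AC=1$. (One may instead work with $\la_2$ and show $\mathrm{Cov}_{\la_2}(\tau_1,\tau_2)=0\iff AC=1$ from the joint moments of $\pi^{(A,1/A,C,D)}_{1,1}$; since $\la_2$ is a marginal of $\la$, this also yields necessity.) In either case $\la$ product Bernoulli $\Rightarrow AC=1$.

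For sufficiency, assume $AC=1$, so $C=1/A$ and the Askey--Wilson parameters in \eqref{eq:lambda} are $(A,1/A,1/A,D)$. The value $AC=1$ is a degenerate point of the Askey--Wilson data — it makes a normalizing $q$-Pochhammer factor vanish — at which the multi-dimensional signed measure $\pi^{(A,1/A,1/A,D)}_{t_1,\dots,t_m}$ collapses to an explicitly integrable form; substituting this into \eqref{eq:lambda} and integrating out, one obtains $\EE_{\la_m}[\prod_{i=1}^m t_i^{\tau_i}]=\prod_{i=1}^m\frac{1+At_i}{1+A}$ for all $m$, hence $\la_m=\ber_m\lb\frac{A}{1+A}\rb$ and, by Definition \ref{def:HD limit measure}, $\la=\ber\lb\frac{A}{1+A}\rb^{\otimes\infty}$. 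Concretely one expects a site-by-site telescoping in the Markov transition-operator description of the signed measure, fueled by the relation $A\cdot C=1$ together with $1+t+2\sqrt t\,x=(1+\sqrt t\,e^{\i\th})(1+\sqrt t\,e^{-\i\th})$ for $x=\cos\th$. A cleaner structural alternative: $\la$ is a stationary measure of the semi-infinite ASEP with parameters $q,\alpha,\gamma$ and density at infinity $\frac{A}{1+A}$; when $AC=1$ the boundary current-balance condition holds, so $\ber\lb\frac{A}{1+A}\rb^{\otimes\infty}$ is itself such a stationary measure with the same density at infinity, and as these stationary measures are parameterized by the density at infinity, the two coincide.

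The crux in both directions is the exact evaluation of the Askey--Wilson integrals. For necessity in the shock sub-region $AC>1$ of the high density phase, $\pi^{(A,1/A,C,D)}$ genuinely carries atoms, so extracting $\la_1(\tau_1=1)$ (or the two-site covariance) needs the careful atom-mass accounting that is the technical core of this paper, and one must rule out accidental cancellations producing $\frac{A}{1+A}$ (resp. $0$) away from $AC=1$. For sufficiency, the delicate step is identifying the precise $q$-Pochhammer degeneration at $AC=1$ and carrying the resulting simplification through the signed transition operators uniformly in $m$ — or, along the structural route, invoking the sharp uniqueness of semi-infinite ASEP stationary measures with a prescribed density at infinity and checking the current-balance reduction to $AC=1$.
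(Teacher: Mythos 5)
Your route is genuinely different from the paper's, and while its skeleton could probably be made to work, as written it has gaps at precisely the points you yourself label ``the crux.'' The paper's necessity argument computes no one-point function or covariance: it sets $t_1=\dots=t_m=t$ in \eqref{eq:lambda}, so the generating function becomes the single integral $\frac{A^m}{(1+A)^{2m}}\int_{\RR}(1+t+2\sqrt{t}x)^m\,\pi_t^{(A,1/A,C,D)}(\d x)$, and sends $m\to\infty$. The largest atom $y_0(t)$ dominates, which simultaneously yields $\rho=A/(1+A)$ and $\pi_t(\{y_0(t)\})=1$; a second asymptotic comparison kills every other atom; the fixed sign of the continuous density \eqref{eq:continuous part density} then forces the continuous part to vanish identically, giving $(q,t,AC,AD,C/A,D/A,CD/t)_{\infty}=0$ and hence $ACq^k=1$; finally $k\geq1$ is excluded by exhibiting a second nonzero atom via \eqref{eq: p_0}--\eqref{eq: p_j}. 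In particular the value $\rho=A/(1+A)$ is derived inside the argument rather than imported from the density profile.

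The concrete gaps in your version are these. First, your identification of the Bernoulli density with $A/(1+A)$ rests on the density at infinity of $\lambda$ and an exchange of limits $\lim_{i}\lim_{n}\EE_{\mu_n}[\tau_i]$, justified only by an appeal to ``classical density computations'' and ``exponential decay of the boundary layer''; none of this is proved or precisely cited here, and it is avoidable (either by the paper's asymptotic device or by your own parenthetical covariance alternative, which needs no a priori value of $\rho$). Second, the decisive algebra is asserted rather than performed. It does appear to close: with $(a,b,c,d)=(A\sqrt{t},\sqrt{t}/A,C/\sqrt{t},D/\sqrt{t})$ one has $abcd=CD$, the first moment of $\pi_t$ equals $(a+b+c+d-abc-abd-acd-bcd)/(2(1-abcd))$ by orthogonality of $w_1$ against $w_0$ (Lemma \ref{lem:orthogonality}), and equating the coefficient of $t$ in \eqref{eq:lambda} for $m=1$ with $A/(1+A)$ reduces to $(1-AC)(1-AD)=0$, which forces $AC=1$ since $AD\leq 0$; but carrying this out is the entire content of the necessity direction, since ``ruling out accidental cancellations'' is exactly what a proof must do. Third, for sufficiency, ``one expects a site-by-site telescoping'' is not an argument. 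The actual mechanism is that at $AC=1$ the factor $(ac)_{\infty}=(AC)_{\infty}=0$ annihilates the continuous density, $(ac)_k=0$ annihilates every atom mass $p_k^{\aa}$ with $k\geq1$, and $b,c,d$ generate no atoms, so $\pi_{t}^{(A,1/A,C,D)}=\delta_{y_0(t)}$; combined with $P_{s,t}(y_0(s),\d y)=\delta_{y_0(t)}(\d y)$ from Lemma \ref{lem:cite}, the multi-dimensional signed measure is a product of point masses and \eqref{eq:lambda} factors as $\prod_{i}\frac{1+At_i}{1+A}$. Your structural alternative instead invokes uniqueness of semi-infinite ASEP stationary measures with prescribed density at infinity, a fact nowhere established in this paper.
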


Proposition \ref{prop:product Bernoulli} will be proved in Section \ref{sec:proof of product Bernoulli}. We note that when  $AC = 1$, it is known that the open ASEP stationary measure $\mu_n$ on $\{0,1\}^n$ is a product Bernoulli measure with density $A/(1+A)$, see for example \cite[Appendix A]{enaud2004large}. Therefore the limiting measure $\lambda$ on $\{0,1\}^{\mathbb{Z}_+}$ is also a product Bernoulli measure with the same density.

\subsection{Comparision with a related result}
As mentioned in the preface, a recent work by Nestoridi--Schmid \cite{nestoridi2023approximating} established a result which is of the same type as our Theorem \ref{thm:LD} in the low density phase. We rephrase their result using our notation as follows.
\begin{theorem}[Theorem 1.4 in \cite{nestoridi2023approximating}]
\label{thm:cite dominik}
Assume that the open ASEP jump rates $\gamma=\delta=0$. Within the low density phase $C>A$, $C>1$ and the shock region $AC>1$, assume furthermore that there exist $\beta'$ and $\beta''$ satisfying $\beta'\leq\beta\leq\beta''$ and some $k\in\ZZ_+$ such that the respective parameters $A':=\phi_+(\beta',\delta)$ and $A'':=\phi_+(\beta'',\delta)$ (recall equations \eqref{eq:defining ABCD} and \eqref{eq:phi} defining $A,B,C,D$) satisfy $C>\max(A',A'',1)$ and $A'Cq^k=A''Cq^{k-1}=1$. Then for any  sequence $\{m_n\}_{n=1}^{\infty}\subset\ZZ_+$ satisfying $n-m_n\rightarrow\infty$ as $n\rightarrow\infty$,  
$$\lim_{n\rightarrow\infty}\dtv\lb \mu_{n|m_n},\ber_{m_n}\lb\frac{1}{1+C}\rb\rb=0.$$ 
\end{theorem}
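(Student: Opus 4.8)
The plan is to exploit the two-sided bracketing $\beta'\le\beta\le\beta''$ together with the attractiveness of open ASEP when $\gamma=\delta=0$, which reduces the problem to the two ``resonant'' systems with parameters $A'$ and $A''$. These are special because $A'C=q^{-k}$ and $A''C=q^{-(k-1)}$: by the standard theory of the DEHP algebra (equivalently, the $q$-Racah degeneration of the Askey--Wilson weight) they admit \emph{finite}-dimensional matrix product representations, of dimensions $k+1$ and $k$, and for such systems the transfer matrix $M=D+E$ is an honest fixed matrix with a spectral gap. This is precisely what upgrades the convergence rate from the $n/\log n$ regime of Theorem~\ref{thm:LD} to the $n-m\to\infty$ regime. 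Concretely: with $\gamma=\delta=0$ the open ASEP is attractive and, via the basic coupling, its stationary measure is stochastically decreasing in $\beta$; since $\phi_+(\cdot,0)$ is non-increasing, $\beta'\le\beta\le\beta''$ gives $\mu_n^{(\beta'')}\preceq_{\mathrm{st}}\mu_n^{(\beta)}\preceq_{\mathrm{st}}\mu_n^{(\beta')}$ (all other rates fixed), and the same for the marginals on $\{1,\dots,m\}$.

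Next I reduce the sandwiched measure to one-site densities. For $\nu^-\preceq_{\mathrm{st}}\nu^+$ on $\{0,1\}^m$, Strassen's theorem yields a coupling $(X,Y)$ with $X\leq Y$ coordinatewise, so $\dtv(\nu^-,\nu^+)\leq\mathbb{P}(X\neq Y)\leq\sum_{i=1}^m\EE(Y_i-X_i)=\sum_{i=1}^m(\rho_i^+-\rho_i^-)$ with $\rho_i^\pm$ the one-site densities. Writing $\rho_i^{(\beta)}:=\EE_{\mu_n^{(\beta)}}\tau_i$ and using the sandwiching,
\[
\dtv\!\lb\mu_{n|m}^{(\beta)},\,\ber_m\lb\tfrac{1}{1+C}\rb\rb\;\leq\;\sum_{i=1}^m\lb\rho_i^{(\beta')}-\rho_i^{(\beta'')}\rb\;+\;\dtv\!\lb\mu_{n|m}^{(\beta')},\,\ber_m\lb\tfrac{1}{1+C}\rb\rb.
\]
So it suffices to prove, as $n-m\to\infty$: (i) $\sum_{i=1}^m(\rho_i^{(\beta')}-\rho_i^{(\beta'')})\to0$, and (ii) $\dtv(\mu_{n|m}^{(\beta')},\ber_m(\tfrac{1}{1+C}))\to0$.

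Both (i) and (ii) come from the finite-dimensional representations. Write the $\{1,\dots,m\}$-marginal weight of $(\tau_1,\dots,\tau_m)$ as $\langle W|X_{\tau_1}\cdots X_{\tau_m}M^{\,n-m}|V\rangle$ with $X_1=D$, $X_0=E$ and $M=D+E$, a fixed primitive matrix ($(k+1)\times(k+1)$ for $\beta'$, $k\times k$ for $\beta''$; primitivity is visible from its tridiagonal/Jacobi form) with simple dominant eigenvalue $\lambda>0$ and second eigenvalue of modulus $\vartheta\lambda$, $\vartheta=\vartheta(A,C,q)\in(0,1)$. Since $\langle W|M^n|V\rangle=Z_n\sim c\lambda^n$ with $c>0$ (known free-energy asymptotics), the Perron projections are non-degenerate, and $M^{\,n-m}|V\rangle=\lambda^{\,n-m}(c|u\rangle+R)$ with $\|R\|\leq C_k\vartheta^{\,n-m}\lambda^{\,n-m}$. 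Using that the $X_\tau$ have nonnegative entries, so $\sum_{\tau}\|X_{\tau_1}\cdots X_{\tau_m}\|_1=\|M^m\|_1\leq C_k\lambda^m$, dividing by $Z_n$ gives
\[
\dtv\!\lb\mu_{n|m}^{(\beta')},\,\mu_{\infty|m}\rb\leq C_k\,\vartheta^{\,n-m},\qquad\text{where }\ \mu_{\infty|m}(\tau_1,\dots,\tau_m):=\frac{\langle W|X_{\tau_1}\cdots X_{\tau_m}|u\rangle}{\lambda^m\langle W|u\rangle}.
\]
By the known weak limit of $\mu_n^{(\beta')}$ in the low density phase (identified in Theorem~\ref{thm:LD}) we have $\mu_{\infty|m}=\ber_m(\tfrac1{1+C})$, which is (ii); the one-site version of the same spectral estimate gives $|\rho_i^{(\beta^*)}-\tfrac1{1+C}|\leq C_k\vartheta^{\,n-i}$ for $*\in\{\,',''\}$, hence $\sum_{i=1}^m(\rho_i^{(\beta')}-\rho_i^{(\beta'')})\leq\frac{2C_k}{1-\vartheta}\vartheta^{\,n-m}\to0$, which is (i). Combining, $\dtv(\mu_{n|m_n}^{(\beta)},\ber_{m_n}(\tfrac1{1+C}))\lesssim\vartheta^{\,n-m_n}\to0$.

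The technical heart is the finite-dimensional spectral input: one must confirm that, at the resonances $A'C=q^{-k}$ and $A''C=q^{-(k-1)}$, the transfer matrix $D+E$ is primitive with a genuine gap (so a single $\vartheta<1$ works) and that the relevant Perron overlaps do not vanish — this should follow from the explicit Jacobi-matrix form of the finite DEHP representation, but it must be carried out with enough uniformity to let $m\to\infty$ simultaneously with $n-m\to\infty$, which is why tracking $\|M^m\|_1\leq C_k\lambda^m$ against $Z_n\asymp\lambda^n$ is the right bookkeeping. This finite-dimensionality is also the structural reason for the seemingly ad hoc hypothesis on $\beta',\beta''$: for a generic $\beta$ the DEHP representation is infinite-dimensional, the transfer operator's spectrum fills $[0,\lambda]$ with no gap, and one is stuck with the lossy $\th^n(Hm)^{3m}$ bound of Theorem~\ref{thm:LD}; the bracketing hypothesis is exactly what permits squeezing the generic system between two finite-dimensional ones.
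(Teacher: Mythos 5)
You should first note that this paper does not actually prove Theorem \ref{thm:cite dominik}: it is imported verbatim from Nestoridi--Schmid, and the paper only records that their proof goes through the Jafarpour--Masharian characterization of the stationary measure at the resonance $ACq^k=1$ as a convex combination of Bernoulli shock measures. Your proposal shares the same skeleton that the hypotheses are engineered for --- sandwich $\beta$ between the two resonant values $\beta',\beta''$ via attractiveness, then exploit finite-dimensionality at the resonances --- but you replace the probabilistic shock-measure decomposition with a Perron--Frobenius analysis of the finite transfer matrix $M=D+E$. The reduction itself is correct: the monotonicity direction of Lemma \ref{lem:sandwiching} is applied correctly, the coupling bound $\dtv\leq\sum_i(\rho_i^{+}-\rho_i^{-})$ is valid, the bookkeeping of $\|M^m\|$ against $Z_n\asymp\lambda^n$ is the right way to let $m\to\infty$ with $n-m\to\infty$, and the identification $\mu_{\infty|m}=\ber_m\lb\tfrac{1}{1+C}\rb$ via the weak limit of Theorem \ref{thm:LD} legitimately kills the potential left-boundary layer in the one-site density estimate. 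I can also confirm a point you left unchecked: the top eigenvalue $2+C+C^{-1}$ of the $(k+1)$-dimensional transfer matrix is automatically simple, because a degeneracy would force $Cq^j=C^{-1}$ for some $1\leq j\leq k$, hence $A'=q^{-k}/C\geq C$, contradicting $C>A'$.

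The genuine gap is the one you flag but do not close, and it is not a routine verification: your argument needs a finite-dimensional representation in which $D$, $E$, $\langle W|$, $|V\rangle$ all have nonnegative entries. This is used twice in an essential way --- once to get $\sum_{\tau}\|X_{\tau_1}\cdots X_{\tau_m}\|_1\leq\const\cdot\|M^m\|_1\leq C_k\lambda^m$ (without nonnegativity you only get $(\|D\|+\|E\|)^m$, which can exceed $\lambda^m$ exponentially and destroys the bound precisely in the regime $m\sim n$), and once to invoke Perron--Frobenius primitivity and the strict positivity of the overlaps $\langle W|u\rangle$, $\langle v|V\rangle$. Finite-dimensional DEHP representations at $ACq^k=1$ are not automatically entrywise nonnegative; establishing a nonnegative one (equivalently, positivity of the coefficients in the shock-measure decomposition in the low density phase) is exactly the nontrivial input that Nestoridi--Schmid outsource to Jafarpour--Masharian. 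Until that step is carried out --- either by exhibiting the explicit nonnegative Jacobi-form representation and checking its boundary vectors, or by citing the shock-decomposition positivity --- the proof is an outline with its central load-bearing claim unproven.
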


This theorem was proved in \cite{nestoridi2023approximating} using an entirely different method from the present paper. Their method relies on a characterization of the open ASEP stationary measures as convex linear combinations of specific Bernoulli shock measures, under the special condition $ACq^k=1$, as shown in Jafarpour--Masharian \cite{jafarpour2007matrix}. When comparing it with Theorem \ref{thm:LD} in the low density phase, Theorem \ref{thm:cite dominik} (i.e., \cite[Theorem 1.4]{nestoridi2023approximating}) provides a significantly faster growth rate for the sequence $\{m_n\}_{n=1}^{\infty}$, albeit within a smaller parameter range, assuming $\gamma=\delta=0$, and within a subregion of the shock region.

\subsection{Outline of the paper} 
Section \ref{subsec:Backgrounds} reviews the Askey--Wilson signed measures and open ASEP stationary measures, along with their useful properties.  In Section \ref{subsec:proof of theorems} we prove the main theorems using technical results that are provided in the three appendices.
 In Section \ref{sec:proof of product Bernoulli} we prove Proposition \ref{prop:product Bernoulli}. 
Appendix \ref{sec:bounding TV distance by generating functions} establishes a bound of the total variation distance between two probability measures by their generating functions. Appendix \ref{sec:total variation bounds} provides total variation bounds for certain Askey--Wilson signed measures. Appendix \ref{sec:time reversal} proves a special symmetry of multi-dimensional Askey--Wilson signed measures known as the time reversal.

 \begin{acks}[Acknowledgments]
We thank Wlodek Bryc for bringing the paper \cite{nestoridi2023approximating} to our attention, for pointing out an error in the previous version of this manuscript, and for   helpful discussions, which, in particular, contributed to Appendix \ref{sec:time reversal}. We thank Ivan Corwin, Evita Nestoridi, Dominik Schmid, Yizao Wang  and Jacek Weso\l owski for helpful conversations.
\end{acks}
\begin{funding}
The author was partially supported by Ivan Corwin's NSF grants DMS:1811143, DMS:2246576, Simons Foundation Grant 929852, and the Fernholz Foundation's `Summer Minerva Fellows' program.
\end{funding}

\section{Proofs of the main theorems} 
We review some  background  in Section \ref{subsec:Backgrounds} and prove the main theorems in Section \ref{subsec:proof of theorems}.
 In Section \ref{sec:proof of product Bernoulli} we will prove Proposition \ref{prop:product Bernoulli}. 

\subsection{Background}
\label{subsec:Backgrounds}
We first review the definition and some results of Askey--Wilson signed measures following \cite{wang2023askey}. The Askey--Wilson signed measures were introduced in \cite{wang2023askey} generalizing the Askey--Wilson processes from the  earlier works \cite{bryc2010askey,bryc2017asymmetric}. 
We later review some useful properties of the open ASEP stationary measures.

\begin{definition}[Definition 2.1 and Definition 2.2 in \cite{wang2023askey}]\label{def:AW}
Assume $q\in[0,1)$. We denote by $\Omega$ the  set of parameters $(a,b,c,d)\in\CC^4$ satisfying the following three assumptions:
\begin{enumerate} 
    \item  [\textbf{(1)}] $a,b$ are real, and $c,d$ are either real or form complex conjugate pair; $ab<1$ and $cd<1$,
    \item  [\textbf{(2)}]for any two  distinct $\e,\f\in\{a,b,c,d\}$ such that $|\e|,|\f|\geq 1$,  we have  $\e/\f\notin\{q^l:l\in\mathbb{Z}\}$,
    \item   [\textbf{(3)}]  $q^labcd\neq1$ for all $l\in\NN$,  where $\NN:=\{0,1,\dots\}$.
\end{enumerate}
For $(a,b,c,d)\in\Omega$, the Askey--Wilson signed measure is of mixed type:
    \be\label{eq:AW signed}\nu(\d x;a,b,c,d)=f(x;a,b,c,d)\one_{|x|<1}\d x+\sum_{x\in F(a,b,c,d)}
p(x) \delta_{x},\ee
where the continuous part density is defined as, for $x=\cos\theta\in(-1,1)$,
\be\label{eq:continuous part density}f(x;a,b,c,d)=\frac{(q, ab, ac, ad, bc, bd, cd)_{\infty}}{2\pi
(abcd)_{\infty}\sqrt{1-x^2}} \lv\frac{(e^{2\i\theta
})_{\infty}}{(ae^{\i\theta}, be^{\i\theta}, ce^{\i\theta},
de^{\i\theta})_{\infty}} \rv^2. 
\ee 
Here and below, for complex $z$ and $n\in\NN\cup\{\infty\}$, we use the $q$-Pochhammer symbol:
$$
(z)_n=(z;q)_n=\prod_{j=0}^{n-1}\,(1-z q^j), \quad (z_1,\cdots,z_k)_n=(z_1,\cdots,z_k;q)_n=\prod_{i=1}^k(z_i;q)_n.
$$
The set of atoms $F(a,b,c,d)$ is generated by each $\e\in\{a,b,c,d\}$ with  $|\e|\geq 1$. One can observe that the assumption $(a,b,c,d)\in\Omega$ guarantee that such $\e$ is a real number. When $\e=a$ the
corresponding atoms are 
$$
y_{k}^\aa=y_{k}^\aa(a,b,c,d)=\frac{1}{2}\lb a q^k+(aq^k)^{-1}\rb,
$$
with $k\geq0$ such that $|aq^k|\geq1$, and the corresponding
masses are 
\begin{align}\label{eq: p_0}
    p(y_{0}^\aa)&=p_0^\aa(a,b,c,d)=\frac{(a^{-2},bc,bd,cd)_\infty}{(b/a,c/a,d/a,abcd)_\infty},\\
    \label{eq: p_j}
p(y_{k}^\aa)&=p_k^\aa(a,b,c,d) 
 = \frac{p_0^\aa(a,b,c,d)q^k(1-a^2q^{2k})(a^2,ab,ac,ad)_k}{(q)_k(1-a^2)a^k\prod_{l=1}^k\lb(b-q^la)(c-q^la)(d-q^la)\rb},\quad k\geq 1. 
\end{align}
The bold symbol $\aa$ in the superscripts signal that the atom (if exists) is generated by the parameter coming from the $\aa$ (first) position of the four parameters. For $\e\in\{b,c,d\}$, atoms $y_k^\bb$, $y_k^\cc$, $y_k^\dd$ and masses $p(y_k^\bb)$, $p(y_k^\cc)$, $p(y_k^\dd)$ are given by similar formulas with $a$ and $\e$ swapped.  
\end{definition}
\begin{remark}
    Parameter $q\in[0,1)$ will be fixed throughout the paper.
\end{remark}
 
\begin{remark}
    We recall from \cite{wang2023askey} that the Askey--Wilson signed measures are finite signed measures with compact supports in $\RR$. Also they have total mass $1$, in the sense that:
$$\int_{\RR}\nu(\d x;a,b,c,d)=1.$$
\end{remark}

For certain parameters $A,B,C,D\in\RR$ and on some suitable  
`time interval' $I\subset\RR$ (which will be defined later), we will study the following Askey--Wilson signed measures: For $t\in I$,
\be\label{eq:marginal AW signed}\pi^{(A,B,C,D)}_t(\d y):=\nu\lb \d y;A\sqrt{t},B\sqrt{t},\frac{C}{\sqrt{t}},\frac{D}{\sqrt{t}}\rb.\ee 
Define $U_t\subset\RR$ to be the (compact) support of $\pi^{(A,B,C,D)}_t(\d y)$. We will also study the following Askey--Wilson signed measures: For $s,t\in I$, $s<t$ and $x\in U_s$,
\be\label{eq:transition AW signed}P_{s,t}^{(A,B)}(x,\d y):=\nu\lb \d y;A\sqrt{t},B\sqrt{t},\sqrt{\frac{s}{t}}\lb x+\sqrt{x^2-1}\rb,\sqrt{\frac{s}{t}}\lb x-\sqrt{x^2-1}\rb\rb.\ee 
When $s=t\in I$ and $x\in U_s$ we define $P^{(A,B)}_{s,s}(x,\d y)=\delta_x(\d y)$ for convenience.

For any $t_1\leq\dots\leq t_n$ in $I$, we define the `multi-dimensional' Askey--Wilson signed measure:  
    \be\label{eq:definition of multi-time}\pi^{(A,B,C,D)}_{t_1,\dots,t_n}(\d x_1,\dots,\d x_n):=\pi^{(A,B,C,D)}_{t_1}(\d x_1)P^{(A,B)}_{t_1,t_2}(x_1,\d x_2)\dots P^{(A,B)}_{t_{n-1},t_n}(x_{n -1},\d x_n),\ee
which is a finite signed measure with total mass $1$, supported on compact subset $U_{t_1}\times\dots\times U_{t_n}\subset\RR^n$.
\begin{remark}
  We note that in this paper, the term “Askey–Wilson signed measure” is used to refer to several related objects: the signed measure $\nu(\d x; a, b, c, d)$ defined by \eqref{eq:AW signed}; its parameter specializations $\pi_t^{(A,B,C,D)}(\d x)$ and $P_{s,t}^{(A,B)}(x, \d y)$ defined  by \eqref{eq:marginal AW signed} and \eqref{eq:transition AW signed} respectively; and the signed measure $\pi^{(A,B,C,D)}_{t_1,\dots,t_n}(\d x_1,\dots,\d x_n)$ defined by \eqref{eq:definition of multi-time}, which we sometimes also refer to as the multi-dimensional Askey–Wilson signed measure.  

    In the rest of this paper, we will omit the superscripts $(A,B,C,D)$ and $(A,B)$ on the Askey--Wilson signed measures $\pi_t$, $\pi_{t_1,\dots,t_n}$ and $P_{s,t}$ if  no confusion will arise.
\end{remark}

The following result characterizes the open ASEP stationary measures by Askey--Wilson signed measures. This characterization was originally due to \cite{bryc2017asymmetric} in the form of Askey--Wilson processes (see also anterior earlier work \cite{uchiyama2004asymmetric}) and was later generalized in \cite{wang2023askey}.
\begin{theorem} 
\label{thm:characterization}
    Consider the open ASEP stationary measure $\mu_n=\mu_n^{(A,B,C,D)}$ on the lattice $\{1,\dots,n\}$.
    We assume that  $q^lABCD\neq1$ for all $l\in\NN$  and that $A/C\notin\{q^l:l\in\ZZ\}$ if $A,C\geq 1$.
    Then there exists $I=[1,1+\ep)$ for some $\ep=\ep(A,B,C,q)>0$ depending on $A,B,C$ and $q$ such that for any  $1\leq m< n$  and $t_1\leq\dots\leq t_m$ in $I$, we have:
    \begin{multline*}
        \EE_{\mu_n}\le\prod_{i=1}^mt_{i}^{\tau_{n-m+i}}\re\\=\frac{\int_{\RR^{m+1}}(2+2x)^{n-m}\prod_{i=1}^m(1+t_i+2\sqrt{t_i}x_i)\pi_{1,t_1,\dots,t_m}\lb\d x, \d x_1,\dots,\d x_m\rb}{\int_{\RR}(2+2x)^n\pi_1(\d x)}.
    \end{multline*} 
\end{theorem}

 We note that the formula above is the generating function for the open ASEP stationary measure $\mu_n$ on $\{0,1\}^n$, which involves the multi-dimensional Askey–Wilson signed measure parameterized by $(A, B, C, D)$. In contrast, formula \eqref{eq:lambda} in Definition \ref{def:HD limit measure} gives the generating function for the limiting measure $\lambda_m$ on $\{0,1\}^m$, which involves the multi-dimensional Askey–Wilson signed measure parameterized by $(A, 1/A, C, D)$.   
\begin{proof}
    This theorem follows from \cite[Theorem 1.1]{wang2023askey} except for the fact that $\ep>0$ can be taken as  depending on $A,B,C$ and $q$ but not on $D$. By the arguments in \cite{wang2023askey}, we only need to show that there exists $\ep=\ep(A,B,C,q)>0$ such that for any  $s<t$ in $I=[1,1+\ep)$ and $x\in U_s$, we have
    $(a,b,c,d)\in\Omega$, where 
    \be \label{eq:abcd1}
    a=A\sqrt{t},\quad b=B\sqrt{t},\quad c=\sqrt{\frac{s}{t}}\lb x+\sqrt{x^2-1}\rb,\quad d=\sqrt{\frac{s}{t}}\lb x-\sqrt{x^2-1}\rb 
    \ee so that $P_{s,t}^{(A,B)}(x,\d y)=\nu\lb \d y;a,b,c,d\rb$.
    Assumption \textbf{(1)} and \textbf{(3)} in Definition \ref{def:AW} always hold since $a$ and $b$ are real numbers; $c$ and $d$ form a complex conjugate pair; $ab=ABt\leq 0$; $cd=s/t<1$ and $abcd=ABs\leq 0$. 

    Recall that we have assumed $A/C\notin\{q^l:l\in\ZZ\}$ if $A,C\geq 1$. One can choose $\ep=\ep(A,B,C,q)>0$ such that $1+\ep<\min\lb 1/q,1/B^2\rb$ and $1+\ep<1/A^2$ if $A<1$, and that the interval $I=[1,1+\ep)$ does not contain elements in $\{Cq^l/A:l\in\ZZ\}$ if $A,C\geq 1$. In the following we will show that for any $s<t$ in $I$ and $x\in U_s$, assumption \textbf{(2)} in Definition \ref{def:AW} holds for $(a,b,c,d)$ given by \eqref{eq:abcd1}. Note that we always have $b=B\sqrt{t}\in(-1,0]$ for $t\in I$. Also, since $B\sqrt{s}, D/\sqrt{s}\in(-1,0]$, any possible atom in $U_s$ is generated by either $A\sqrt{s}$ or $C/\sqrt{s}$. We split the proof into three cases depending on $x\in U_s$:
\begin{enumerate}
    \item [Case 1.] Let $x\in[-1,1]$ then $c$ and $d$ are complex conjugate pairs with norm $<1$, where the norm of a complex number $z$ refers to its absolute value $|z|$.  Hence at most one element in $\{a,b,c,d\}$ has norm $\geq1$ and assumption \textbf{(2)} vacuously holds.
    \item [Case 2.] Let $x=\frac{1}{2}\lb q^jA\sqrt{s}+\lb q^jA\sqrt{s}\rb^{-1}\rb$ for $j\in\NN$ and $q^jA\sqrt{s}>1$. Then $c=q^jAs/\sqrt{t}$ and $d=1/\lb q^jA\sqrt{t}\rb<1$. Only $a$ and $c$ in $\{a,b,c,d\}$ can have norm $\geq1$. We have $c/a=q^js/t\notin\{q^l:l\in\ZZ\}$ since $s/t\in(q,1)$. Hence assumption \textbf{(2)} holds.
    \item [Case 3.] Let $x=\frac{1}{2}\lb q^jC/\sqrt{s}+\lb q^jC/\sqrt{s}\rb^{-1}\rb$ for $j\in\NN$ and $q^jC/\sqrt{s}>1$. Then $c=q^jC/\sqrt{t}$ and $d=s/(q^jC\sqrt{t})<1$. Only $a$ and $c$ in $\{a,b,c,d\}$ can have norm $\geq1$. If $A<1$ then $a=A\sqrt{t}<1$ and assumption \textbf{(2)} vacuously holds. If $C<1$ then $c=q^jC/\sqrt{t}<1$ and assumption \textbf{(2)} vacuously holds. If $A,C\geq 1$ then $c/a=q^jC/(At)\notin\{q^l:l\in\ZZ\}$ by our assumption, hence assumption \textbf{(2)} holds. 
\end{enumerate} 

We conclude the proof.
\end{proof}

The next result is a basic property of the Askey--Wilson signed measure $P_{s,t}(x,\d y)$:
\begin{lemma}[Lemma 2.14 and Lemma 2.15 in \cite{wang2023askey}]\label{lem:cite}
    Assume  that $q^lABCD\neq1$ for all $l\in\NN$ and that $A/C\notin\{q^l:l\in\ZZ\}$ if $A,C\geq 1$. Choose the interval $I=[1,1+\ep)$ from Theorem \ref{thm:characterization}.
    Then for any $s\leq t$ in $I$ and any $x\in U_s$, the Askey--Wilson signed measure $P_{s,t}(x,\d y)$ is supported on $U_t$. In the high density phase, we have $P_{s,t}\lb y_0(s),\d y\rb=\delta_{y_0(t)}(\d y)$, where we denote by $y_0(t)$ the largest atom in $U_t$ for any $t\in I$.
\end{lemma}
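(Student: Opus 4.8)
The plan is to read off both parts directly from the explicit description of the Askey--Wilson signed measure, running the same case analysis on $x\in U_s$ that appears in the proof of Theorem~\ref{thm:characterization}. Recall that the support of $\nu(\d y;a,b,c,d)$ lies in $[-1,1]$ together with the atoms $y_k^{\e}$ generated by those $\e\in\{a,b,c,d\}$ with $|\e|\geq1$, such $\e$ being automatically real when $(a,b,c,d)\in\Omega$. First I would note that, after shrinking $\ep$ so that $B\sqrt t,D/\sqrt t\in(-1,0)$ (using $1+\ep<1/B^2$ and $s,t\geq1$), $U_t=[-1,1]\cup\{y_k^{A\sqrt t}\}\cup\{y_k^{C/\sqrt t}\}$, each family present only when $A\sqrt t\geq1$, resp.\ $C/\sqrt t\geq1$. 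Since $P_{s,t}(x,\cdot)=\nu(\cdot\,;a,b,c,d)$ with $a=A\sqrt t$, $b=B\sqrt t$, $c=\sqrt{s/t}\,(x+\sqrt{x^2-1})$, $d=\sqrt{s/t}\,(x-\sqrt{x^2-1})$ and $cd=s/t\leq1$, the crucial point is that $a$ is the \emph{same} parameter as in $\pi_t$ (so its atoms already lie in $U_t$) and $b$ never generates an atom; hence the whole problem reduces to controlling the atoms produced by $c$ and $d$.

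For the first part I would split into the same three cases as in the proof of Theorem~\ref{thm:characterization}. If $x\in[-1,1]$, then $c,d$ form a complex-conjugate pair of modulus $\sqrt{s/t}\leq1$, so there are no atoms from $c,d$ and $P_{s,t}(x,\cdot)$ is supported in $[-1,1]\cup\{y_k^{A\sqrt t}\}\subseteq U_t$. Otherwise $x>1$ is an atom of $U_s$, and since $B\sqrt s,D/\sqrt s\in(-1,0)$ we have $x=y_j^{A\sqrt s}$ or $x=y_j^{C/\sqrt s}$ for some $j\in\NN$; then $x+\sqrt{x^2-1}$ equals the corresponding generating parameter, while $x-\sqrt{x^2-1}\in(0,1)$ forces $|d|<\sqrt{s/t}\leq1$, so $d$ contributes nothing. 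In the $C$-case $c=q^jC/\sqrt t$, so every atom of $P_{s,t}(x,\cdot)$ coming from $c$ is some $y_{j+k}^{C/\sqrt t}\in U_t$. In the $A$-case $c=q^jAs/\sqrt t$ and $d=1/(q^jA\sqrt t)$, whence $ad=q^{-j}$; here, if $|c|\geq1$, all atoms generated by $c$ carry zero mass, because after swapping $a\leftrightarrow c$ the numerator of $p_0^{\cc}$ in \eqref{eq: p_0} contains the factor $(ad)_\infty=(q^{-j};q)_\infty=0$, and by \eqref{eq: p_j} $p_0^{\cc}$ is a multiplicative factor of every $p_k^{\cc}$. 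In all cases $P_{s,t}(x,\cdot)$ is supported in $U_t$ (the case $s=t$ being trivial, as $P_{s,s}(x,\cdot)=\delta_x$).

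For the second part, in the high density phase $A\sqrt t>1$ and, since $z\mapsto\tfrac12(z+z^{-1})$ increases on $[1,\infty)$ and $A\sqrt t>C/\sqrt t$, the largest atom of $U_t$ is $y_0(t)=\tfrac12\big(A\sqrt t+(A\sqrt t)^{-1}\big)$, and likewise $y_0(s)=y_0^{A\sqrt s}$ — the $j=0$ instance of the computation above, where $c=As/\sqrt t$, $d=1/(A\sqrt t)$ and $ad=1$. I would then check $\nu(\cdot\,;a,b,c,d)=\delta_{y_0(t)}$ in three steps: (i) the continuous density \eqref{eq:continuous part density} vanishes identically, because $(ad)_\infty=(1;q)_\infty=0$ appears in its numerator; (ii) the atoms from $c$ carry zero mass (as above), and $|d|=(A\sqrt t)^{-1}<1$, $|b|<1$ leave no atoms from $d$ or $b$; (iii) for the atoms from $a$, the relation $d=a^{-1}$ makes $(a^{-2})_\infty=(d/a)_\infty$ cancel in \eqref{eq: p_0}, and substituting $a=A\sqrt t,\,b=B\sqrt t,\,c=As/\sqrt t,\,d=1/(A\sqrt t)$ gives $bc=abcd=ABs$, $bd=b/a=B/A$, $cd=c/a=s/t$, so $p_0^{\aa}=1$, while for $k\geq1$ the numerator of $p_k^{\aa}$ in \eqref{eq: p_j} contains $(ad)_k=(1;q)_k=0$, so $p_k^{\aa}=0$. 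Hence $\nu(\cdot\,;a,b,c,d)=p_0^{\aa}\,\delta_{y_0^{\aa}}=\delta_{y_0(t)}$.

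The step I expect to be the main obstacle is making the ``zero mass'' conclusions fully rigorous at the non-generic parameter values where a \emph{denominator} $q$-Pochhammer in \eqref{eq: p_0}--\eqref{eq: p_j} also vanishes, so that the mass is a priori an indeterminate $0/0$: for example $(a/c)_\infty$ vanishes exactly when $s=t$ (which is why that case is set aside), and in the denominator of $p_k^{\aa}$ one must also exclude $d-q^la=0$, i.e.\ $A^2t\in\{q^{-l}:l\geq1\}$. I would handle this by first cancelling the common $q$-Pochhammer factors so that each mass is displayed as a manifestly finite ratio before specializing $s,t$, or by a continuity argument in $t\in I$; and I would track that the fixed $\ep$ from Theorem~\ref{thm:characterization} (possibly shrunk by a further amount depending only on $A,B,C,q$) keeps $B\sqrt t$, $D/\sqrt t$ and $d$ strictly inside $(-1,1)$, so that the dichotomy ``modulus $<1$, or zero mass'' is genuinely exhaustive.
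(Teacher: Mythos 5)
The paper does not actually prove this lemma --- it is imported verbatim from \cite{wang2023askey} (Lemmas 2.14 and 2.15), so you are supplying a proof where the paper supplies only a citation. Your overall strategy (explicit case analysis on $x\in U_s$ plus direct inspection of the mass formulas \eqref{eq: p_0}--\eqref{eq: p_j}) is the natural one, and your proof of the second statement is correct and essentially complete: the identities $ad=1$, $bc=abcd$, $bd=b/a$, $cd=c/a$, $a^{-2}=d/a$ do give $p_0^\aa=1$, and $(ad)_k=(1)_k=0$ kills every other contribution. Moreover, the $0/0$ issue you single out as the main obstacle is in fact vacuous: a factor $d-q^lc$ with $1\leq l\leq k$ can vanish only if $s/t=cd=q^lc^2\geq q^{l-2k}$, forcing $l>2k\geq 2l$, a contradiction; the same argument disposes of $d-q^la$, and $(a/c)_\infty\neq 0$ because $t/s\in(1,1/q)$. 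So no cancellation or continuity argument is needed there.

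The genuine gap is in the first statement, and it comes from your identification $U_t=[-1,1]\cup\{y_k^{A\sqrt t}\}\cup\{y_k^{C/\sqrt t}\}$. By definition $U_t$ is the \emph{support of the signed measure} $\pi_t$, and the hypotheses of the lemma do not exclude $ACq^i=1$ (only $q^lABCD\neq 1$ and $A/C\notin\{q^l\}$ are assumed; the case $ACq^i=1$ is exactly the Jafarpour--Masharian regime discussed elsewhere in the paper). When $ACq^i=1$ the prefactor $(ac)_\infty=(AC)_\infty$ of the continuous density \eqref{eq:continuous part density} of $\pi_t$ vanishes, so $[-1,1]\not\subseteq U_t$, and the atoms $y_k^{A\sqrt t}$ with $(AC)_k=0$ have zero mass and also drop out of $U_t$. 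Your argument only bounds $\mathrm{supp}\,P_{s,t}(x,\cdot)$ by the larger ``generic'' set, so the asserted containment in $U_t$ is not established at these parameters. The statement is still true, but closing the gap requires matching the vanishing patterns of the two measures: for $x=y_j^{A\sqrt s}\in U_s$ one has $ad=q^{-j}$, so $(ad)_\infty=0$ annihilates the continuous part of $P_{s,t}(x,\cdot)$ and $(ad)_k=(q^{-j})_k=0$ annihilates the masses of $y_k^{A\sqrt t}$ for $k>j$, while $x\in U_s$ forces $(AC)_j\neq 0$ and hence $y_k^{A\sqrt t}\in U_t$ for $k\leq j$; an analogous computation with $(ac)_\infty=(q^jAC)_\infty$ and $(ca)_k=(q^jAC)_k$ handles the case $x=y_j^{C/\sqrt s}$ and the $x\in[-1,1]$ case (where $x\in U_s$ already forces $(AC)_\infty\neq0$). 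With that bookkeeping added, your proof goes through.
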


We next recall some useful properties of the open ASEP stationary measure.
The following result is well-known as the particle-hole duality:
\begin{lemma}[Particle-hole duality]\label{lem:particle hole}
    The stationary measures $\mu_n^{(A,B,C,D)}$ and $\mu_n^{(C,D,A,B)}$ for open ASEP on $\{1,\dots,n\}$ are related by a transformation of occupation variables $$\widehat{\tau}_i:=1-\tau_{n+1-i}\quad\text{ for }i=1,\dots,n$$
\end{lemma}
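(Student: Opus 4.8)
The plan is to prove the duality at the level of Markov generators and then invoke uniqueness of the stationary measure on a finite irreducible chain. Write $L=L_n^{q,\alpha,\beta,\gamma,\delta}$ for the generator of open ASEP on $\{1,\dots,n\}$ acting on functions $f\colon\{0,1\}^n\to\RR$: it is a sum of bulk terms (for each bond $\{i,i+1\}$, a right jump of a particle at rate $1$, encoded by the weight $\tau_i(1-\tau_{i+1})$, and a left jump at rate $q$, encoded by $(1-\tau_i)\tau_{i+1}$) together with four boundary terms (entrance at site $1$ at rate $\alpha$ with weight $1-\tau_1$, removal at site $1$ at rate $\gamma$ with weight $\tau_1$, entrance at site $n$ at rate $\delta$ with weight $1-\tau_n$, removal at site $n$ at rate $\beta$ with weight $\tau_n$). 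Introduce the involution $\Theta\colon\{0,1\}^n\to\{0,1\}^n$, $(\Theta\tau)_i=1-\tau_{n+1-i}$, which simultaneously swaps particles with holes and reflects the lattice; note $(\Theta\tau)_{n+1-i}=1-\tau_i$, and $\Theta$ carries the bond swap $\tau^{i,i+1}$ to $(\Theta\tau)^{n-i,n-i+1}$ and the single flip $\tau^{i}$ to $(\Theta\tau)^{n+1-i}$.

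First I would verify the conjugation identity
\[
L_n^{q,\alpha,\beta,\gamma,\delta}\,(f\circ\Theta)\;=\;\bigl(L_n^{q,\beta,\alpha,\delta,\gamma}f\bigr)\circ\Theta\qquad\text{for all }f\colon\{0,1\}^n\to\RR,
\]
by matching transitions one at a time after substituting $\sigma=\Theta\tau$. For the bulk bond $\{i,i+1\}$ with $j:=n-i$, the rate-$1$ weight $\tau_i(1-\tau_{i+1})$ equals $\sigma_j(1-\sigma_{j+1})$ and the transition $\tau\mapsto\tau^{i,i+1}$ corresponds to $\sigma\mapsto\sigma^{j,j+1}$ moving a particle from $j$ to $j+1$; thus a rate-$1$ right jump maps to a rate-$1$ right jump, and symmetrically the rate-$q$ family is preserved. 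The crucial point is that the bulk asymmetry parameter $q$ is unchanged precisely because each of ``particle$\leftrightarrow$hole'' and ``reflect the lattice'' reverses the orientation of a bulk jump, so their composition preserves it. For the boundary terms: the $\alpha$-entrance at site $1$ becomes removal at site $n$ at rate $\alpha$ (new $\beta=$ old $\alpha$), the $\gamma$-removal at site $1$ becomes entrance at site $n$ at rate $\gamma$ (new $\delta=$ old $\gamma$), the $\delta$-entrance at site $n$ becomes removal at site $1$ at rate $\delta$ (new $\gamma=$ old $\delta$), and the $\beta$-removal at site $n$ becomes entrance at site $1$ at rate $\beta$ (new $\alpha=$ old $\beta$). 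Collecting the terms, the image generator is exactly $L_n^{q,\beta,\alpha,\delta,\gamma}$.

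Since $\Theta$ is an involution and $f\mapsto f\circ\Theta$ is a bijection of functions, the identity above gives, for any measure $\mu$ invariant for $L_n^{q,\alpha,\beta,\gamma,\delta}$ and any $f$,
\[
\int L_n^{q,\beta,\alpha,\delta,\gamma}f\;d(\Theta_*\mu)=\int\bigl(L_n^{q,\beta,\alpha,\delta,\gamma}f\bigr)\circ\Theta\;d\mu=\int L_n^{q,\alpha,\beta,\gamma,\delta}(f\circ\Theta)\;d\mu=0,
\]
so the pushforward $\Theta_*\mu$ is invariant for $L_n^{q,\beta,\alpha,\delta,\gamma}$. Under the standing assumption \eqref{eq:conditions open ASEP} the open ASEP is an irreducible continuous-time Markov chain on the finite set $\{0,1\}^n$, hence has a unique stationary measure, so $\Theta_*\mu_n^{q,\alpha,\beta,\gamma,\delta}=\mu_n^{q,\beta,\alpha,\delta,\gamma}$. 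Finally, translating parameters: by \eqref{eq:defining ABCD} and \eqref{eq:phi}, the swap $(\alpha,\gamma)\leftrightarrow(\beta,\delta)$ interchanges $A=\phi_+(\beta,\delta)$ with $C=\phi_+(\alpha,\gamma)$ and $B=\phi_-(\beta,\delta)$ with $D=\phi_-(\alpha,\gamma)$, so $\mu_n^{q,\beta,\alpha,\delta,\gamma}=\mu_n^{(C,D,A,B)}$; since $(\Theta\tau)_i=1-\tau_{n+1-i}=\widehat\tau_i$ is precisely the transformation in the statement, this is the asserted identity. The only real obstacle is the transition-by-transition bookkeeping in the conjugation identity (in particular the invariance of $q$); everything else is an appeal to uniqueness of the stationary distribution and the definitions of $A,B,C,D$. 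This duality is classical and already appears in \cite{derrida1993exact}, but the generator computation above is self-contained.
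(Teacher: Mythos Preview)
Your proof is correct. The generator conjugation identity is verified correctly (the key observation that both ``particle$\leftrightarrow$hole'' and lattice reflection individually reverse jump direction, so their composition preserves it, is exactly the reason the bulk parameter $q$ is unchanged), the boundary bookkeeping is right, and the passage from conjugation of generators to equality of stationary measures via uniqueness on a finite irreducible chain is the standard and correct move. The translation $(\alpha,\gamma)\leftrightarrow(\beta,\delta)\Longleftrightarrow(A,B)\leftrightarrow(C,D)$ through \eqref{eq:defining ABCD} is also correct.

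The paper does not give its own proof of this lemma; it simply cites \cite[Section 4.2]{wang2023askey}. Your argument is the standard dynamical proof of particle--hole duality and is essentially what one finds in the cited reference (and, as you note, going back to \cite{derrida1993exact}). So there is no meaningful methodological difference to compare---you have supplied a self-contained version of the proof that the paper chose to outsource.
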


See for example \cite[Section 4.2]{wang2023askey} for the proof of the above result.

The open ASEP stationary measure is known to be continuous with respect to its parameters:
\begin{lemma}\label{lem:continuity}
    As a probability measure on $\{0,1\}^n$, the open ASEP stationary measure $\mu_n^{(A,B,C,D)}$ depends continuously on its parameters $A,B,C,D$ and $q$.
\end{lemma}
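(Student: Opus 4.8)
The plan is to reduce the claim to the classical fact that the unique stationary distribution of a finite, irreducible, continuous-time Markov chain depends continuously (indeed rationally) on its transition rates, combined with the observation that the open ASEP rates are continuous functions of $A,B,C,D$ and $q$.

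First I would make the latter precise. Inverting the relations \eqref{eq:defining ABCD}--\eqref{eq:phi} is elementary: from $\phi_+(x,y)\phi_-(x,y)=-y/x$ and $\phi_+(x,y)+\phi_-(x,y)=(1-q-x+y)/x$ one obtains explicit formulas such as $\beta=(1-q)/((1+A)(1+B))$ and $\delta=-AB(1-q)/((1+A)(1+B))$, with the analogous expressions for $\alpha,\gamma$ obtained by replacing $(A,B)$ with $(C,D)$. On the parameter domain \eqref{eq:conditions qABCD} the denominators satisfy $(1+A)(1+B)>0$ and $(1+C)(1+D)>0$, so these formulas exhibit $\alpha,\beta,\gamma,\delta$ as continuous (rational) functions of $(A,B,C,D,q)$; the bulk rates are $q$ and $1$, trivially continuous. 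Hence the generator $Q=Q_n(A,B,C,D,q)$ of the open ASEP on $\{0,1\}^n$, viewed as a $2^n\times 2^n$ matrix, has entries depending continuously on $(A,B,C,D,q)$.

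Next I would use irreducibility of the open ASEP on $\{1,\dots,n\}$ for every admissible parameter choice (part of the standing assumptions on the model), which guarantees that $\mu_n^{(A,B,C,D)}$ is the \emph{unique} probability vector $\pi$ on $\{0,1\}^n$ solving $\pi Q=0$. Continuity then follows by a standard compactness argument: given parameters converging to $(A,B,C,D,q)$, with generators $Q_k\to Q$ entrywise, the associated stationary vectors $\pi_k$ lie in the compact simplex of probability measures on $\{0,1\}^n$, so any subsequence has a further subsequence $\pi_{k_j}\to\pi_\ast$; passing to the limit in $\pi_{k_j}Q_{k_j}=0$ gives $\pi_\ast Q=0$ with $\pi_\ast$ a probability vector, whence $\pi_\ast=\mu_n^{(A,B,C,D)}$ by uniqueness. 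Since every subsequence of $(\pi_k)$ admits a further subsequence with this same limit, $\pi_k\to\mu_n^{(A,B,C,D)}$, which is the asserted continuity.

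There is no serious obstacle here; the only points requiring care are recording the inverse parameterization so the statement is genuinely phrased in $(A,B,C,D,q)$, and noting that irreducibility --- hence uniqueness of the stationary measure --- holds throughout the domain \eqref{eq:conditions qABCD}. If one prefers an argument that makes continuity manifest and bypasses the compactness step, one can instead invoke the Markov chain tree theorem, which expresses each coordinate $\mu_n^{(A,B,C,D)}(x)$ as a ratio of two strictly positive polynomials in the off-diagonal entries of $Q$; composing with the first step then shows $\mu_n^{(A,B,C,D)}$ is in fact real-analytic in $(A,B,C,D,q)$.
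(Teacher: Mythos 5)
Your argument is correct. Structurally it matches the paper's proof in its first half: both reduce the statement to continuity of the stationary measure in the jump rates by composing with the inverse parameterization $\beta=(1-q)/((1+A)(1+B))$, $\delta=-AB(1-q)/((1+A)(1+B))$ (and the analogous formulas for $\alpha,\gamma$), which is exactly the formula the paper quotes from Bryc--Weso{\l}owski and which is well defined on the domain \eqref{eq:conditions qABCD} since $(1+A)(1+B)>0$ and $(1+C)(1+D)>0$. Where you diverge is in how the continuity-in-rates step is handled: the paper simply cites \cite[Remark 1.9]{barraquand2023stationary} for real-analytic dependence of the stationary measure on $(\alpha,\beta,\gamma,\delta,q)$ in the uniqueness region, whereas you prove it from scratch via the compactness-of-the-simplex plus uniqueness argument (or, in your closing remark, via the Markov chain tree theorem, which recovers the full real-analyticity the cited reference provides). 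Your version is self-contained and elementary, at the cost of a few extra lines; the paper's is shorter but outsources the key fact. Both are complete proofs, and your observation that irreducibility (hence uniqueness of the stationary vector) holds throughout \eqref{eq:conditions open ASEP} because $\alpha,\beta>0$ is the one hypothesis that genuinely needs checking in either route.
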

\begin{proof}
    It is shown in \cite[Remark 1.9]{barraquand2023stationary} that the open ASEP stationary measure depends real analytically on the jump rates $\alpha,\beta,\gamma,\delta$ and $q$ in the region of the parameter space where the stationary measure is unique. The continuity with respect to parameters $A,B,C,D$ then follows from \cite[equation (2.4)]{bryc2017asymmetric}:
    \begin{multline*}
        \alpha=\frac{1-q}{(1+C)(1+D)},\quad \beta=\frac{1-q}{(1+A)(1+B)},\quad \\\gamma=\frac{-(1-q)CD}{(1+C)(1+D)},\quad \delta=\frac{-(1-q)AB}{(1+A)(1+B)}.\end{multline*}
\end{proof}

The following result is known as the `stochastic sandwiching' of open ASEP stationary measure, which is first introduced by \cite{corwin2021stationary}:
\begin{lemma}[Lemma 4.1 in \cite{corwin2021stationary}]\label{lem:sandwiching}
    Fix $q\in[0,1)$ and consider real numbers
    $$0<\alpha' \leq\alpha'',\quad\beta' \geq\beta''>0,\quad\gamma' \geq\gamma''\geq0,\quad0\leq\delta' \leq\delta''.$$

Consider the open ASEP on the lattice $\{1,\dots,n\}$ with rates $(q,\alpha',\beta',\gamma',\delta')$ and $(q,\alpha'',\beta'',\gamma'',\delta'')$. We denote the stationary measures as $\mu_n'$ and $\mu_n''$. The corresponding occupation variables are denoted as $(\tau'_1,\dots,\tau'_n)$ and  $(\tau''_1,\dots,\tau''_n)$. Then there exists a coupling of $\mu'_n$ and $\mu_n''$ such that almost surely $\tau'_i\leq\tau_i''$ for $i=1,\dots,n$.

As a corollary, for any $t_1,\dots,t_n\geq1$, we have
$$\EE_{\mu'_n}\le\prod_{i=1}^nt_i^{\tau_i}\re\leq\EE_{\mu''_n}\le\prod_{i=1}^nt_i^{\tau_i}\re.$$
\end{lemma}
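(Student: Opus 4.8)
The plan is to produce the coupling directly by coupling the two open ASEP \emph{dynamics} through shared Poisson clocks (the \emph{basic coupling}), check that this coupling is attractive, and then push it to stationarity using finiteness of the state space; the corollary is then immediate from monotonicity of the test function. Concretely, run configurations $\tau'$ and $\tau''$ on one probability space as follows. Each bulk edge $\{i,i+1\}$, $1\leq i\leq n-1$, carries a rate-$1$ clock for right jumps and a rate-$q$ clock for left jumps; when such a clock rings one attempts the corresponding exclusion move in both copies simultaneously (a particle actually moves only if in that copy the source site is occupied and the target empty). At the left boundary use a rate-$\alpha'$ clock attempting a creation at site $1$ in \emph{both} copies, plus an independent rate-$(\alpha''-\alpha')$ clock attempting creation only in $\tau''$; and a rate-$\gamma''$ clock attempting an annihilation at site $1$ in both copies, plus an independent rate-$(\gamma'-\gamma'')$ clock attempting annihilation only in $\tau'$ --- this uses $\alpha'\leq\alpha''$ and $\gamma'\geq\gamma''$. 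Symmetrically at the right boundary use a shared rate-$\beta''$ annihilation at site $n$ plus an extra rate-$(\beta'-\beta'')$ annihilation only in $\tau'$, and a shared rate-$\delta'$ creation at site $n$ plus an extra rate-$(\delta''-\delta')$ creation only in $\tau''$. Each marginal then evolves as open ASEP with the prescribed rates.

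Next I would verify that the coordinatewise order $\tau'\leq\tau''$ is preserved. For bulk moves this is the classical attractiveness of ASEP: a simultaneous attempted jump across $\{i,i+1\}$ cannot create a wrong-sign discrepancy at $i$ or at $i+1$, since doing so would require a wrong-sign discrepancy to be present already at one of these two sites, contradicting $\tau'\leq\tau''$. At the left boundary, a shared creation and a creation only in $\tau''$ can only raise coordinates of $\tau''$, while a shared annihilation and an annihilation only in $\tau'$ can only lower coordinates of $\tau'$; in each case the order is preserved, and these are precisely the clocks the rate inequalities permit. The right boundary is handled in the same way. Hence, started from any ordered pair --- e.g.\ $(\mathbf 0,\mathbf 1)$ --- the coupled process stays forever in the closed set $S:=\{(\sigma,\eta)\in(\{0,1\}^n)^2:\sigma\leq\eta\}$.

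To reach the stationary regime, observe that since $n$ is finite, the coupled process restricted to the (closed, finite) set $S$ is a continuous-time Markov chain, and hence has a stationary distribution $\Pi$ supported on $S$. Projecting the coupled generator onto either coordinate recovers the generator of the corresponding open ASEP, so both marginals of $\Pi$ are stationary for those dynamics; as open ASEP on $\{0,1\}^n$ is irreducible, its stationary measure is unique, so the marginals of $\Pi$ must be $\mu_n'$ and $\mu_n''$. Thus $\Pi$ is a coupling with $\tau_i'\leq\tau_i''$ almost surely, which is the assertion. (Equivalently, attractiveness gives $\mu_n'\preceq\mu_n''$ in the stochastic order and one invokes Strassen's theorem.) For the corollary, note that for $t_1,\dots,t_n\geq1$ the map $(\tau_1,\dots,\tau_n)\mapsto\prod_{i=1}^n t_i^{\tau_i}$ is nondecreasing on $\{0,1\}^n$; under $\Pi$ we then have $\prod_i t_i^{\tau_i'}\leq\prod_i t_i^{\tau_i''}$ almost surely, and taking expectations yields $\mathbb E_{\mu_n'}[\prod_i t_i^{\tau_i}]\leq\mathbb E_{\mu_n''}[\prod_i t_i^{\tau_i}]$.

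The main obstacle is the attractiveness check at the boundary: one must match each of the four rate inequalities to the correct ``shared-clock / extra-clock'' split and confirm that every boundary transition type preserves the coordinatewise order --- the bulk computation is standard, but the boundary bookkeeping is exactly where the hypotheses $\alpha'\leq\alpha''$, $\beta'\geq\beta''$, $\gamma'\geq\gamma''$, $\delta'\leq\delta''$ are consumed. The passage from order-preserving dynamics to order-coupled stationary measures is then routine given the finite state space.
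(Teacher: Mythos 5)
Your proof is correct and is essentially the standard argument: the paper does not prove this lemma itself but cites \cite{corwin2021stationary}, Lemma 4.1, whose proof is exactly this basic coupling with shared clocks, attractiveness in the bulk and at the boundaries via the four rate inequalities, and passage to stationarity using finiteness and uniqueness of the stationary measure. Nothing to add.
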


\subsection{Proof of Theorem \ref{thm:LD} and Theorem \ref{thm:HD}}
\label{subsec:proof of theorems}
In this subsection we will demonstrate the proofs of our main results: Theorem \ref{thm:LD} in the low density phase and Theorem \ref{thm:HD} in the high density phase. 

For reasons that will become clear later, we first prove the results modulo a particle-hole duality. The main parts of these theorems (modulo the particle-hole duality) are stated in  Theorem \ref{thm:modulo particle hole} below, which combines both the high and low density phases.

\begin{definition}\label{def:measure etam}
    Assume $\max(A,C)>1$ and that $A/C\notin\{q^l:l\in\ZZ\}$ if $A,C\geq 1$. We define probability measures $\eta_m$ on $\{0,1\}^m$ for $m\in\ZZ_+$ by their joint generating functions:  For some $\ep>0$ and for any $t_1\leq\dots \leq t_m$ within the interval $[1,1+\ep)$,
    \begin{multline}
    \label{eq:def of eta}
\EE_{\eta_m}\le\prod_{i=1}^mt_i^{\tau_i}\re\\=\int_{\RR^m}\prod_{i=1}^m\frac{1+t_i+2\sqrt{t_i}x_i}{2+2y_0(1)}
        P_{1,t_1}\lb y_0(1),\d x_1\rb P_{t_1,t_2}\lb x_1,\d x_2\rb \dots P_{t_{m-1},t_m}\lb x_{m-1},\d x_m\rb,
\end{multline} 
 where we recall from Lemma \ref{lem:cite} that $y_0(1)$ is the largest atom in the (compact) support $U_1$ of the Askey--Wilson signed measure $\pi_1(\d y)=\nu(\d y;A,B,C,D)$, and $P_{s,t}(x,\d y)$ was defined by \eqref{eq:transition AW signed}.
 We will prove in Theorem \ref{thm:modulo particle hole}  that there exists $\ep>0$ depending on $A,B,C,D$ and $q$ such that for all $m\in\ZZ_+$, the probability measure $\eta_m$ on $\{0,1\}^m$ are well-defined.
\end{definition}

\begin{theorem}\label{thm:modulo particle hole}
    Assume $\max(A,C)>1$ and that $A/C\notin\{q^l:l\in\ZZ\}$ if $A,C\geq 1$.   Then the probability measures $\eta_m$ on $\{0,1\}^m$ for $m\in\ZZ_+$ in Definition \ref{def:measure etam} are well-defined, and  the marginal distribution of $\eta_{m+1}$ on the leftmost sublattice $\{1,\dots,m\}$ coincides with $\eta_m$.   

Denote the marginal distribution of open ASEP stationary measure $\mu_n=\mu_n^{(A,B,C,D)}$ on the last $m$ sites $\{n-m+1,\dots,n\}$ by $\mu_{n,m}$. Then there exists $H>0$ depending on $A,B,C,D$ and $q$ and $\th\in(0,1)$ depending on $A,C$ and $q$  such that for all  $1\leq m\leq n$,
\be  \label{eq:TV bound final}
\dtv\lb\mu_{n,m},\eta_m\rb\leq\th^n (Hm)^{3m}. 
\ee  
\end{theorem}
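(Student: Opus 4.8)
The plan is to control the total variation distance via generating functions, using the bound from Appendix~\ref{sec:bounding TV distance by generating functions} that estimates $\dtv(\k,\k')$ for two probability measures on $\{0,1\}^m$ in terms of the differences of their joint generating functions evaluated at suitable points $t_1\le\dots\le t_m$ in a small interval $I=[1,1+\ep)$. Thus the task reduces to (i) showing that the candidate generating function in \eqref{eq:def of eta} genuinely defines a probability measure $\eta_m$, with the stated consistency across $m$, and (ii) bounding the difference between $\EE_{\mu_{n,m}}[\prod t_i^{\tau_{n-m+i}}]$, given by Theorem~\ref{thm:characterization}, and $\EE_{\eta_m}[\prod t_i^{\tau_i}]$ from \eqref{eq:def of eta}, uniformly for $t_i\in I$.

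The central mechanism is the following: by Theorem~\ref{thm:characterization}, the generating function of $\mu_{n,m}$ is the ratio
\[
\frac{\int (2+2x)^{n-m}\prod_{i=1}^m(1+t_i+2\sqrt{t_i}x_i)\,\pi_{1,t_1,\dots,t_m}(\d x,\d x_1,\dots,\d x_m)}{\int (2+2x)^n\,\pi_1(\d x)}.
\]
As $n\to\infty$, the weight $(2+2x)^{n-m}$ against the signed measure $\pi_1(\d x)$ concentrates — after normalization — on the largest atom $y_0(1)$ of $U_1$, because $x\mapsto 2+2x$ is maximized there and, by Lemma~\ref{lem:cite}, $P_{1,t_1}(y_0(1),\d y)=\delta_{y_0(1)}$ propagates this atom forward (so in the high density phase the atom structure is stable along the chain). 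The limit of the ratio is then exactly the integral against $P_{1,t_1}(y_0(1),\cdot)P_{t_1,t_2}\cdots$ with the normalizing factor $(2+2y_0(1))$, which is precisely \eqref{eq:def of eta}. To make this quantitative I would split $U_1$ into the top atom $\{y_0(1)\}$ and the rest $U_1\setminus\{y_0(1)\}$, observe that on the remainder $2+2x\le 2+2x'$ for some $x'<y_0(1)$ strictly bounded away from $y_0(1)$, and that $\pi_1$ and all the transition kernels $P_{s,t}$ are \emph{finite} signed measures with total variation bounded uniformly in $t_i\in I$ (this is where Appendix~\ref{sec:total variation bounds} enters — the atom-mass analysis gives $\|\pi_{1,t_1,\dots,t_m}\|_{\mathrm{TV}}\le (Hm)^{O(m)}$ type bounds). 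The ratio of "bad" part to "good" part is then at most $(\text{const})^n\cdot(Hm)^{3m}$ with a geometric factor $\th^n$, $\th=\big(2+\max(2,qC+(qC)^{-1},\dots)\big)/\big(2+C+C^{-1}\big)<1$: the numerator of $\th$ reflects the largest possible value of $2+2x$ over $U_1\setminus\{y_0(1)\}$ (the competing atom being at $q C/\sqrt{\cdot}$ or $A/\sqrt{\cdot}$, hence the $qC$ and $A$ terms), and the denominator $2+2y_0(1)=2+C+C^{-1}$. Combining, the well-definedness of $\eta_m$ follows by letting $n\to\infty$ in the (nonnegative, total-mass-one) marginals $\mu_{n,m}$ and using that the limit of their generating functions is \eqref{eq:def of eta}; consistency across $m$ is then inherited from the consistency of the $\mu_{n,m}$'s.

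The main obstacle I expect is step (ii)'s quantitative estimate: bounding the total variation of the multi-dimensional Askey--Wilson signed measure $\pi_{1,t_1,\dots,t_m}$ (and of the renormalized version built from $P_{1,t_1}(y_0(1),\cdot)$) by something like $(Hm)^{3m}$, uniformly for $t_i$ in $I$. This is genuinely delicate because each kernel $P_{t_{i-1},t_i}(x_{i-1},\d y)$ may carry atoms whose masses, given by \eqref{eq: p_0}--\eqref{eq: p_j}, involve $q$-Pochhammer ratios that can be large; one must track how these masses compound over $m$ steps and show the growth is only exponential-in-$m$ with a polynomial-in-$m$ base — this is exactly the "subtle analysis of atom masses" the abstract refers to, and I would isolate it as Appendix~\ref{sec:total variation bounds}. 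A secondary technical point is ensuring $\ep>0$ can be chosen depending only on $A,B,C,D,q$ (not on $m$ or $n$) so that all parameter tuples $(a,b,c,d)$ arising stay in $\Omega$; this is already handled by the argument in the proof of Theorem~\ref{thm:characterization}, which I would reuse verbatim. Finally, the passage from the generating-function bound to the total variation bound \eqref{eq:TV bound final} uses Appendix~\ref{sec:bounding TV distance by generating functions}, which will contribute the extra polynomial factors in $m$ (absorbed into $H$) and fixes the exponent $3m$.
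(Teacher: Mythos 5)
Your proposal follows essentially the same route as the paper: express the generating function of $\mu_{n,m}$ as the ratio from Theorem \ref{thm:characterization}, observe that the weight $(2+2x)^{n}$ concentrates the signed measure $\pi_1$ on its largest atom $y_0(1)$ at geometric rate $\th^n$ with $\th=(1+y_0^*(1))/(1+y_0(1))$, control the multi-dimensional signed measure via the total variation bound $\lnor P_{s,t}(x,\cdot)\rnor\leq K/(t-s)^2$ of Appendix \ref{sec:total variation bounds}, evaluate at a grid of $t_i$'s with spacing $\ep/(2m)$ (whence the $(Hm)^{3m}$), and convert to a total variation bound via Proposition \ref{prop:bounding distance two prob measures}. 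The identification of $\eta_m$ as the limit and the consistency of the family $\{\eta_m\}$ are obtained exactly as you describe. One parenthetical slip: $P_{1,t_1}(y_0(1),\d y)=\delta_{y_0(t_1)}(\d y)$ holds only in the high density phase (Lemma \ref{lem:cite}), whereas the theorem also covers the low density phase where this kernel is a genuine signed measure; fortunately your concentration argument does not actually rely on this delta property, so this does not damage the proof.

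There is, however, one genuine gap. Theorem \ref{thm:characterization}, on which your entire quantitative estimate rests, requires the hypothesis $q^lABCD\neq 1$ for all $l\in\NN$, but Theorem \ref{thm:modulo particle hole} makes no such assumption, and the excluded parameter set is nonempty (in the shock region one can have $ABCD=AC\cdot BD\geq 1$, hence $q^lABCD=1$ for some $l$). Your proposal never addresses this case, so as written it proves the theorem only on a proper subset of its stated range. The paper closes this gap in a separate step by perturbing $D$ to a sequence $D_k\to D$ with $q^lABCD_k\neq 1$, invoking continuity of $\mu_n^{(A,B,C,D)}$ in its parameters, and --- crucially --- verifying that the constants in the bound can be taken uniform along the sequence; the delicate point is that the denominator $\lv\pi_1(\{y_0(1)\})\rv-\th^n\lnor\pi_1\rnor$ must stay bounded away from zero uniformly in $k$, which requires the additional estimate $\lnor\pi_1\rnor\leq \frac{L}{1-BD}\lv\pi_1(\{y_0(1)\})\rv$ of Proposition \ref{prop:bound on pi11} (with $L$ independent of $D$). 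Without some such uniformity argument, the limiting bound could degenerate as $D_k\to D$, so this step is not merely cosmetic and needs to be supplied.
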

\begin{remark}\label{rmk:mu}
    We note the difference between notations $\mu_{n|m}$ and $\mu_{n,m}$. The first notation denotes the marginal  of $\mu_n$ on the first $m$ sites and the second one denotes the marginal on the last $m$ sites.
\end{remark}

The proof of the above theorem will constitute the major technical component of this section. Before commencing with the proof, we will state two results which will be needed in the proof. 

The following result provides a delicate bound of the total variation of Askey--Wilson signed measures $P_{s,t}(x,\d y)$: 

\begin{proposition}\label{prop:TV bound of AW signed measures1}
    Assume $A,C\geq0$, $-1<B,D\leq 0$ and $q\in[0,1)$.
    Assume also  that $q^lABCD\neq1$ for all $l\in\NN$ and that $A/C\notin\{q^l:l\in\ZZ\}$ if $A,C\geq 1$. Then there exist positive constants $K\geq1$ and $\ep$ depending on $A,B,C$ and $q$ but not on $D$, such that for any $s<t$ in $I=[1,1+\ep)$ and $x\in U_s$, the total variation of the Askey--Wilson signed measure $P_{s,t}(x,\d y)$ is bounded from above by $\frac{K}{(t-s)^2}$. 
\end{proposition}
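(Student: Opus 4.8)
The plan is to bound the total variation of $P_{s,t}(x,\d y) = \nu(\d y;a,b,c,d)$ with $(a,b,c,d)$ as in \eqref{eq:abcd1}, by splitting the signed measure into its continuous part $f(y;a,b,c,d)\one_{|y|<1}\d y$ and its atomic part, and controlling each separately, uniformly over $x\in U_s$ and $s<t$ in $I$. Since $b = B\sqrt{t}\in(-1,0]$ and (recalling the proof of Theorem \ref{thm:characterization}) $c,d$ either form a complex conjugate pair of modulus $\sqrt{s/t}<1$ or are real with one of them $<1$ in modulus, the only parameters among $\{a,b,c,d\}$ that can have modulus $\geq 1$ are $a=A\sqrt{t}$ (when $A\geq 1$) and, in Case 3 of that proof, $c = q^jC/\sqrt{t}$. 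So $F(a,b,c,d)$ contains at most the $\aa$-string of atoms generated by $a$ and at most the $\cc$-string generated by $c$; in all cases the number of atoms stays bounded (by something depending only on $A,C,q$, not on $D$, since $|aq^k|\ge 1$ forces $k \le \log_q(1/A\sqrt t)$ etc.).

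First I would handle the continuous part. Integrating $|f(y;a,b,c,d)|$ over $|y|<1$: the prefactor $(q,ab,ac,ad,bc,bd,cd)_\infty/\big(2\pi(abcd)_\infty\big)$ is a ratio of $q$-Pochhammer symbols whose arguments all stay in a fixed compact subset of $\{|z|<1\}$ — here one uses $ab = ABt \le 0$, $cd = s/t < 1$ bounded away from $1$ since $t/s \le 1+\ep$, and $|ac|,|ad|,|bc|,|bd| = |a|\sqrt{s/t}$ or $|b|\sqrt{s/t}$ which stay bounded — so this prefactor is uniformly bounded. The remaining factor $\big|(e^{2\i\theta})_\infty / (ae^{\i\theta},be^{\i\theta},ce^{\i\theta},de^{\i\theta})_\infty\big|^2 / \sqrt{1-y^2}$ integrates against $\d y = -\sin\theta\,\d\theta$ on $(-1,1)$; the potential blow-up of the denominator as one of $a,b,c,d$ approaches the unit circle is the delicate point, but since $b$ stays in $(-1,0]$ bounded away from $-1$ (as $t \le 1+\ep < 1/B^2$), $c,d$ stay bounded away from the unit circle by $\sqrt{s/t}$ when complex, and when $a$ or $c$ is real with modulus near $1$ the corresponding Pochhammer factor in the denominator behaves like $(1 - |a|e^{\pm\i\theta})$ whose reciprocal is still square-integrable against $\d\theta$ near $\theta = 0$ — indeed $\int_0^\pi |1 - re^{\i\theta}|^{-2}\,d\theta$ stays bounded as $r\uparrow 1$ only if we're careful; more robustly, $1 - |a|^2 \ge c_0(t-s)$ or a similar quantitative gap should be extracted. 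Actually the cleanest route: use $(q,ab,\dots)_\infty/(abcd)_\infty$ times $|(e^{2\i\theta})_\infty|^2$ is bounded, and $|(ae^{\i\theta})_\infty|^{-2}$ is integrable with a bound that degrades only when $a\to 1$; since for the continuous part to even be relevant we need to track how close $a,c$ get to $1$, and the $(t-s)^{-2}$ in the statement suggests the real obstruction is elsewhere (the atoms), I expect the continuous part to contribute an $O(1)$ bound after these estimates, or at worst $O\big((t-s)^{-1}\big)$.

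The main work — and the reason for the $(t-s)^{-2}$ — is the atomic part. I would write out $p(y_k^\aa)$ and $p(y_k^\cc)$ from \eqref{eq: p_0}–\eqref{eq: p_j} with the substitution \eqref{eq:abcd1} and sum absolute values. The $k=0$ masses $p_0^\aa = (a^{-2},bc,bd,cd)_\infty/(b/a,c/a,d/a,abcd)_\infty$ and its $\cc$-analogue are the dangerous ones: the denominator contains $(c/a)_\infty$ (for $p_0^\aa$), and $c/a = \sqrt{s/t}(x+\sqrt{x^2-1})/(A\sqrt t)$, which in Case 2 equals $q^j s/t$ — this is bounded away from $1$ since $s/t<1$, so no blow-up there. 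But $d/a = \sqrt{s/t}(x - \sqrt{x^2-1})/(A\sqrt t)$ and the factor $(a^{-2})_\infty = (1/(A^2t))_\infty$ could approach a vanishing factor when $A^2 t \to 1$, i.e.\ when $t\to 1/A^2$; and crucially $(c/a)$ or $(d/a)$ in the denominator vanishes precisely when $x + \sqrt{x^2-1}$ or its reciprocal hits $A\sqrt t/\sqrt s$, which as $s\to t$ pins $x$ near the atom $y_0^\aa(s) = \frac12(A\sqrt s + (A\sqrt s)^{-1})$ — exactly the regime where $P_{s,t}(y_0(s),\d y) = \delta_{y_0(t)}$ by Lemma \ref{lem:cite}, so the near-singular atom masses should nearly cancel in total variation but individually blow up like $(t-s)^{-1}$ each, and there can be two such (from $a$ and from $c$), giving $(t-s)^{-2}$. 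So the key step is a careful expansion: for each atom, show $|p(y_k)| \le \mathrm{Const}/(t-s)$ where the numerator Pochhammer $(1-c/a)$ or $(1-d/a) = 1 - s/t$ or $1 - q^j s/t$ contributes the single power of $(t-s)^{-1}$ (since $1 - s/t = (t-s)/t \asymp t-s$), and when $x$ ranges over atoms of $U_s$ generated by $A\sqrt s$ we additionally get a factor from $(1 - a^2 q^{2k})$-type terms — I'd bound the product $\prod_{l=1}^k (\e - q^l a)$ in the denominator from below by a constant (using the separation assumptions and $t\in I$), and the numerator $(a^2, ab, ac, ad)_k$ from above by a constant, leaving the genuinely small factors isolated. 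Summing the $O(1)$-many atoms gives $K/(t-s)^2$.

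The hard part will be precisely this last bookkeeping: identifying which Pochhammer factors in $p_0^\aa$, $p_0^\cc$ and the recursion \eqref{eq: p_j} can be small, showing each such factor is $\gtrsim (t-s)$ rather than something worse like $(t-s)^{1/2}$ or $0$, and checking that at most two independent powers of $(t-s)^{-1}$ ever accumulate — i.e.\ that the $a$-string and $c$-string singularities don't compound within a single atom. The separation hypothesis $A/C \notin \{q^l\}$ (when $A,C\ge 1$) and the choice of $\ep$ ensuring $I$ avoids $\{Cq^l/A\}$ are what prevent a denominator factor $(c/a - q^l)$ from vanishing and are used here exactly as in the proof of Theorem \ref{thm:characterization}. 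I would organize the atomic estimate along the same three-case split (Case 1: no atoms, trivial; Case 2: $x$ an $\aa$-atom of $U_s$; Case 3: $x$ a $\cc$-atom of $U_s$) to keep the substitutions \eqref{eq:abcd1} explicit.
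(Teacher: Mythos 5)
Your handling of the atomic part is essentially the paper's argument: the number of atoms is uniformly bounded, the numerators of the masses \eqref{eq: p_0}--\eqref{eq: p_j} are uniformly bounded, and the only dangerous quantities are the denominator factors $|1-q^l\f/\e|$; one enumerates the possible values of $\f/\e$ according to the same three-case split on $x\in U_s$ that appears in the proof of Theorem \ref{thm:characterization}, shows each potentially degenerate factor is bounded below by $\const\times(t-s)$ (using the separation hypotheses and the choice of $\ep$), and checks that at most two such factors accumulate in any single atom mass, giving $\const/(t-s)^2$ per atom. This is precisely the content of the paper's Claims \ref{claim} and \ref{claim:only need}, and you correctly identify it as the crux.

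The genuine gap is the continuous part. You attempt to bound $\int_{-1}^1|f(y;a,b,c,d)|\,\d y$ by direct estimation of the density and conjecture it contributes $O(1)$, or at worst $O((t-s)^{-1})$, on the grounds that ``the real obstruction is the atoms.'' This is not justified, and one intermediate assertion (that $\int_0^\pi|1-re^{\i\theta}|^{-2}\,\d\theta$ stays bounded as $r\uparrow1$) is false --- that integral diverges like $(1-r)^{-1}$. More importantly, the continuous part cannot be decoupled from the atoms: the density \eqref{eq:continuous part density} has fixed sign on $(-1,1)$ and the signed measure has total mass $1$, so $\int_{-1}^1|f|\,\d y=\lv 1-\sum_j p_j\rv$, which is exactly as large as the atom masses you are trying to control; whenever the atoms blow up like $(t-s)^{-2}$, so does the continuous part. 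The paper turns this into a one-line fix rather than an obstacle (Lemma \ref{lem:bound TV}): $\lnor\nu\rnor\leq1+2\card\lb F(a,b,c,d)\rb\sup|p_j^{\eee}|$, so the continuous part is controlled for free once the atom masses are. Replacing your direct estimation of $f$ by this observation closes the gap; the rest of your plan goes through as you describe.
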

\begin{remark}\label{rmk:tv bound}
The above result will appear again as Proposition \ref{prop:TV bound of AW signed measures}, which will be proved in Appendix \ref{sec:total variation bounds}. It is worth noting that a strictly weaker version of this total variation bound has previously appeared in \cite{wang2023askey}. In particular, the total variation bound of $P_{s,t}(x,\mathrm{d}y)$ provided by \cite[Proposition A.1]{wang2023askey} can be derived as a simple corollary of the above result, but the converse does not hold. The derivation of the bound here requires a delicate analysis of atom masses, see Appendix \ref{sec:total variation bounds}.\end{remark}

To bound the total variation distance between two probability measures (in our case, $\mu_{n,m}$ and $\eta_m$) on $\{0,1\}^m$, we will use the following bound of this total variation distance by generating functions:

\begin{proposition}\label{prop:bounding distance two prob measures1}
    Let $\k$ and $\k'$ be probability measures on $\{0,1\}^m$. Then for any set of numbers $0<t_{i,0}<t_{i,1}$ for $i=1,\dots,m$, we have:
    $$ 
    \dtv(\k,\k')
    \leq \frac{1}{2}\prod_{i=1}^m\frac{1+t_{i,1}}{t_{i,1}-t_{i,0}}\sum_{\up_1,\dots,\up_m\in\{0,1\}}
    \lv\EE_{\k}\le\prod_{i=1}^mt_{i,\up_i}^{\tau_i}\re - \EE_{\k'}\le\prod_{i=1}^mt_{i,\up_i}^{\tau_i}\re\rv, $$
    where $\tau_i\in\{0,1\}$ is the occupation variable on the site $i$, for $i=1,\dots,m$.
\end{proposition}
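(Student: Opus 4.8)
\textbf{Proof proposal for Proposition \ref{prop:bounding distance two prob measures1}.}

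The plan is to reduce the total variation distance between $\k$ and $\k'$ to a single Fourier-type coefficient in the multilinear expansion of the generating function, and then to extract that coefficient by a finite-difference (divided-difference) operation in each variable. First I would write the total variation distance as
$$
\dtv(\k,\k')=\frac12\sum_{\sigma\in\{0,1\}^m}\bigl|\k(\sigma)-\k'(\sigma)\bigr|.
$$
For each fixed $\sigma=(\sigma_1,\dots,\sigma_m)\in\{0,1\}^m$ the number $\k(\sigma)-\k'(\sigma)$ is exactly a single coefficient in the expansion of the polynomial $P(t_1,\dots,t_m):=\EE_{\k}[\prod_i t_i^{\tau_i}]-\EE_{\k'}[\prod_i t_i^{\tau_i}]$, namely the coefficient of $\prod_{i:\sigma_i=1}t_i$. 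So the whole statement is an instance of: the coefficients of a multilinear polynomial are controlled by its values at a grid of $2^m$ points, with an explicit constant.

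The key step is a one-variable identity applied coordinate by coordinate. For a polynomial that is affine in $t_i$, say $g(t_i)=c_0+c_1 t_i$, one has the divided-difference formula
$$
c_1=\frac{g(t_{i,1})-g(t_{i,0})}{t_{i,1}-t_{i,0}},\qquad
c_0=\frac{t_{i,1}\,g(t_{i,0})-t_{i,0}\,g(t_{i,1})}{t_{i,1}-t_{i,0}},
$$
so that for $\up_i\in\{0,1\}$ the coefficient of $t_i^{\sigma_i}$ can be written as a signed combination $\sum_{\up_i\in\{0,1\}}w_{i,\up_i}^{(\sigma_i)}g(t_{i,\up_i})$ with weights satisfying $\sum_{\up_i}|w_{i,\up_i}^{(\sigma_i)}|\le \frac{1+t_{i,1}}{t_{i,1}-t_{i,0}}$ (when $\sigma_i=1$ the weights are $\pm 1/(t_{i,1}-t_{i,0})$; when $\sigma_i=0$ they are $t_{i,1}/(t_{i,1}-t_{i,0})$ and $t_{i,0}/(t_{i,1}-t_{i,0})$, and in either case the sum of absolute values is at most $(1+t_{i,1})/(t_{i,1}-t_{i,0})$ using $0<t_{i,0}<t_{i,1}$). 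Iterating this over $i=1,\dots,m$ (since $P$ is multilinear, hence affine in each variable separately with the other variables held fixed), I would obtain
$$
\k(\sigma)-\k'(\sigma)=\sum_{\up_1,\dots,\up_m\in\{0,1\}}\Bigl(\prod_{i=1}^m w_{i,\up_i}^{(\sigma_i)}\Bigr)\Bigl(\EE_{\k}\bigl[\textstyle\prod_i t_{i,\up_i}^{\tau_i}\bigr]-\EE_{\k'}\bigl[\textstyle\prod_i t_{i,\up_i}^{\tau_i}\bigr]\Bigr).
$$
Taking absolute values, using $\bigl|\prod_i w_{i,\up_i}^{(\sigma_i)}\bigr|\le \prod_i |w_{i,\up_i}^{(\sigma_i)}|$, summing over $\sigma\in\{0,1\}^m$, and interchanging the order of summation, the $\sigma$-sum of the weight products factorizes as $\prod_{i=1}^m\bigl(\sum_{\sigma_i\in\{0,1\}}|w_{i,\up_i}^{(\sigma_i)}|\bigr)$; bounding each inner sum crudely by $(1+t_{i,1})/(t_{i,1}-t_{i,0})$ gives the claimed factor $\prod_i (1+t_{i,1})/(t_{i,1}-t_{i,0})$, and the leftover factor $\tfrac12$ comes from the $\tfrac12$ in the definition of $\dtv$. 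I would carry this out by an explicit induction on $m$ to make the bookkeeping of the product of weights rigorous.

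I do not expect a genuine obstacle here; the only delicate point is organizing the constant carefully. Specifically, one must check that for $\sigma_i=0$ the two weights $t_{i,0}/(t_{i,1}-t_{i,0})$ and $t_{i,1}/(t_{i,1}-t_{i,0})$ really sum (in absolute value) to at most $(1+t_{i,1})/(t_{i,1}-t_{i,0})$ — this uses $t_{i,0}<t_{i,1}$ so that $t_{i,0}+t_{i,1}<1+t_{i,1}$ fails in general unless $t_{i,0}<1$... so more honestly one bounds $t_{i,0}+t_{i,1}\le 2t_{i,1}$ and uses $2t_{i,1}\le 1+t_{i,1}$ only when $t_{i,1}\le 1$; to get a bound valid for all $t_{i,1}>0$ one instead notes $t_{i,0}+t_{i,1}\le 1+t_{i,1}$ is false, so the cleanest route is to absorb the $\sigma_i=1$ and $\sigma_i=0$ cases jointly: after summing over $\sigma_i\in\{0,1\}$ the total weight attached to $g(t_{i,\up_i})$ is $(1+t_{i,\up_i'})/(t_{i,1}-t_{i,0})$ for the appropriate other index, which is at most $(1+t_{i,1})/(t_{i,1}-t_{i,0})$. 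Making this last aggregation precise — summing over $\sigma_i$ \emph{before} bounding — is the one place where care is needed, and I would present that step explicitly rather than leave it to the reader.
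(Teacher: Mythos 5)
Your proposal is correct and is essentially the paper's argument: both extract the multilinear coefficients of the difference of generating functions via one-variable divided differences at the grid points $\{t_{i,0},t_{i,1}\}$ and iterate over coordinates (the paper by induction on $m$, you by the equivalent factorized product of weights), with the crucial point in both being that one sums over $\sigma_i\in\{0,1\}$ \emph{before} bounding, so that the total absolute weight attached to $G(\dots,t_{i,\up_i},\dots)$ is $(1+t_{i,1-\up_i})/(t_{i,1}-t_{i,0})\le(1+t_{i,1})/(t_{i,1}-t_{i,0})$. Your flagged concern about the $\sigma_i$-wise bound failing when $t_{i,1}<1$ is real, but your proposed fix (aggregate over $\sigma_i$ first) is exactly what the paper's base case $m=1$ does, so there is no gap.
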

\begin{remark}
The above result will appear again as Proposition \ref{prop:bounding distance two prob measures} which will be proved in Appendix \ref{sec:bounding TV distance by generating functions}. As one can observe, this constitutes a general bound applicable to any two probability measures on $\{0,1\}^m$. While it is possible that this result is already known, we have been unable to locate it in references.\end{remark}

We now begin the proof of Theorem \ref{thm:modulo particle hole}:
\begin{proof}[Proof of Theorem \ref{thm:modulo particle hole}]
    We divide the proof into three steps.\\

{\bf \raggedleft \underline{Step 1.}} 
In this step, under an extra assumption that $q^lABCD\neq1$ for all $l\in\NN$, we prove that there exists  $\ep>0$ such that, for $1\leq t_1\leq\dots \leq t_m< 1+\ep$, we have:
\begin{multline}\label{eq:only need to show step 1}
    \lim_{n\rightarrow\infty}\EE_{\mu_{m,n}}\le\prod_{i=1}^mt_i^{\tau_i}\re\\=\int_{\RR^m}\prod_{i=1}^m\frac{1+t_i+2\sqrt{t_i}x_i}{2+2y_0(1)}
        P_{1,t_1}\lb y_0(1),\d x_1\rb P_{t_1,t_2}\lb x_1,\d x_2\rb \dots P_{t_{m-1},t_m}\lb x_{m-1},\d x_m\rb.
\end{multline} 
The above equation in particular implies that its RHS (i.e., the RHS of \eqref{eq:def of eta}) indeed defines a probability measure $\eta_m$ on $\{0,1\}^m$ that is the limit of measures $\mu_{m,n}$ as $n\rightarrow\infty$. As a corollary, the marginal distribution of $\eta_{m+1}$ on the first $m$ sites coincides with $\eta_m$. 

We choose $\ep>0$ and $I=[1,1+\ep)$ according to Theorem \ref{thm:characterization} and Proposition \ref{prop:TV bound of AW signed measures1}.  By Theorem \ref{thm:characterization}, 
we have that for any $1\leq m< n$ and $1=t_0\leq t_1\leq\dots\leq t_m<1+\ep$,
\be\label{eq:generating as G1'G2'}\EE_{\mu_{n,m}}\le\prod_{i=1}^mt_i^{\tau_i}\re=\EE_{\mu_n}\le\prod_{i=1}^mt_{i}^{\tau_{n-m+i}}\re=\frac{\G_1'}{\G_2'},\ee
where we define:
\begin{align}\label{eq:G1'G2'}
    \begin{split}
        \G_1':=&\frac{1}{\lb 2+2y_0(1)\rb^n}\int_{\RR^{m+1}}(2+2x)^{n-m}\prod_{i=1}^m(1+t_i+2\sqrt{t_i}x_i)\pi_{1,t_1,\dots,t_m}\lb\d x, \d x_1,\dots,\d x_m\rb,\\
        \G_2':=&\frac{1}{\lb 2+2y_0(1)\rb^n}\int_{\RR}(2+2x)^n\pi_1(\d x).
    \end{split}
\end{align}
We write the RHS of \eqref{eq:only need to show step 1} as $\G_1/\G_2$, where  
\begin{align}\label{eq:G1G2}
    \begin{split}
        \G_1:=&\int_{\RR^m}\prod_{i=1}^m\frac{1+t_i+2\sqrt{t_i}x_i}{2+2y_0(1)}\pi_{1,t_1,\dots,t_m}\lb\{y_0(1)\}, \d x_1,\dots,\d x_m\rb \\
        =&\frac{1}{\lb 2+2y_0(1)\rb^n} \\
        & \times \int_{\RR^{m}}(2+2y_0(1))^{n-m}\prod_{i=1}^m(1+t_i+2\sqrt{t_i}x_i)\pi_{1,t_1,\dots,t_m}\lb\{y_0(1)\}, \d x_1,\dots,\d x_m\rb,\\
        \G_2:=&\pi_1\lb\{y_0(1)\}\rb = \frac{1}{\lb 2+2y_0(1)\rb^n}\int_{\{y_0(1)\}}(2+2x)^n\pi_1(\d x).
    \end{split}
\end{align}
We need to bound the difference between $\G_1/\G_2$ and $\G_1'/\G_2'$. Under $|\G_2|>|\G_2'-\G_2|$,  we have:
\be \label{eq:gamma bound}
\begin{split} \left\lvert\frac{\G_1'}{\G_2'}-\frac{\G_1}{\G_2}\right\rvert 
    &=\frac{|\G_1'\G_2-\G_1\G_2'|}{|\G_2'\G_2|} 
     \leq \frac{|\G_1'\G_2-\G_1\G_2|+|\G_1\G_2-\G_1\G_2'|}{|\G_2'\G_2|}\\
    &=\frac{|\G_1'-\G_1|}{|\G_2'|}+\frac{|\G_1|}{|\G_2'\G_2|}|\G_2'-\G_2| 
    \leq\frac{|\G_1'-\G_1|}{|\G_2|-|\G_2'-\G_2|}+\frac{|\G_1|}{|\G_2|}\frac{|\G_2'-\G_2|}{|\G_2|-|\G_2'-\G_2|}. 
\end{split}
\ee  
Hence we need to bound three quantities $|\G_1/\G_2|$, $|\G_1'-\G_1|$ and $|\G_2'-\G_2|$. We first recall that $\G_1/\G_2$ is the RHS of \eqref{eq:only need to show step 1}, i.e.,
\be\label{eq:vwev}
\frac{\G_1}{\G_2} =\int_{\RR^m}\prod_{i=1}^m\frac{1+t_i+2\sqrt{t_i}x_i}{2+2y_0(1)}
       P_{1,t_1}\lb y_0(1),\d x_1\rb P_{t_1,t_2}\lb x_1,\d x_2\rb \dots P_{t_{m-1},t_m}\lb x_{m-1},\d x_m\rb.
\ee
In view of Lemma \ref{lem:cite}, the signed measure $$P_{1,t_1}\lb y_0(1),\d x_1\rb\dots P_{t_{m-1},t_m}\lb x_{m-1},\d x_m\rb$$ on the RHS
is supported on $U_{t_1}\times\dots\times U_{t_m}$. 
We next estimate its total variation. We denote the total variation of any signed measure $\nu$ by $\lnor\nu\rnor$.
Notice that for $1\leq i\leq m$ and $x\in U_{t_{i-1}}$, we have $\lnor P_{t_{i-1},t_i}\lb x,\d y\rb\rnor=1$ when $t_{i-1}=t_i$ and, by Proposition \ref{prop:TV bound of AW signed measures1}, $\lnor P_{t_{i-1},t_i}\lb x,\d y\rb\rnor\leq\frac{K}{(t_i-t_{i-1})^2}$ when $t_{i-1}<t_i$. Therefore, by multiplying those bounds together, we have:
\be \label{eq:qwer}\lnor P_{1,t_1}\lb y_0(1),\d x_1\rb P_{t_1,t_2}\lb x_1,\d x_2\rb \dots P_{t_{m-1},t_m}\lb x_{m-1},\d x_m\rb\rnor\leq \prod_{1\leq i\leq m, t_{i-1}<t_i}\frac{K}{(t_i-t_{i-1})^2}.\ee
Notice that $1+t+2\sqrt{t}x>0$ for any $x\in U_t$.  
Define $$r:= \sup_{t\in I} \lb 1+t+2\sqrt{t}y_0(t)\rb$$ 
In view of \eqref{eq:vwev} and \eqref{eq:qwer},  and noting that $2+2y_0(t)\geq4>1$, we have: 
 \be\label{eq:bound on G1 over G2} 
\lv\frac{\G_1}{\G_2}\rv\leq \lb \frac{r}{2+2y_0(1)}\rb^m\prod_{1\leq i\leq m, t_{i-1}<t_i}\frac{K}{(t_i-t_{i-1})^2}\leq r^m\prod_{1\leq i\leq m, t_{i-1}<t_i}\frac{K}{(t_i-t_{i-1})^2}.
\ee 
We next bound the differences $|\G_1'-\G_1|$ and $|\G_2'-\G_2|$.  
By \eqref{eq:G1'G2'} and \eqref{eq:G1G2} we have: 
\begin{multline}\label{eq:step 11}
    \G_1'-\G_1=\frac{1}{\lb 2+2y_0(1)\rb^n}\int_{\lb U_1\setminus\{y_0(1)\}\rb\times U_{t_1}\times\dots\times U_{t_m}}(2+2x)^{n-m}\prod_{i=1}^m(1+t_i+2\sqrt{t_i}x_i)\\\times\pi_{1,t_1,\dots,t_m}\lb\d x, \d x_1,\dots,\d x_m\rb.
\end{multline}
Using Proposition \ref{prop:TV bound of AW signed measures1} in a similar way as above, we have:
\begin{multline*}
    \lnor\pi_{1,t_1,\dots,t_m}\rnor \leq 
        \lnor\pi_1\rnor\prod_{i=1}^m\sup_{x\in U_{t_{i-1}}}\lnor P_{t_{i-1},t_i}\lb x,\d y\rb\rnor \\
         \leq \lnor\pi_1\rnor\prod_{1\leq i\leq m, t_{i-1}<t_i}\frac{K}{(t_i-t_{i-1})^2}.
\end{multline*}
For $t\in I$, we denote by $y_0^*(t)$ the largest element of $U_t\setminus\{y_0(t)\}$. In the support $$\lb U_1\setminus\{y_0(1)\}\rb\times U_{t_1}\times\dots\times U_{t_m}$$ on the RHS integration of \eqref{eq:step 11}, we have $x\leq y^*_0(1)$ and $x_i\leq y_0(t_i)$ for $1\leq i\leq m$. Hence:
\begin{multline*}
0<\frac{(2+2x)^{n-m}\prod_{i=1}^m(1+t_i+2\sqrt{t_i}x_i)}{\lb 2+2y_0(1)\rb^n}\\\leq\frac{(2+2y^*_0(1))^{n-m}r^m}{\lb 2+2y_0(1)\rb^n}< \frac{(2+2y^*_0(1))^nr^m}{\lb 2+2y_0(1)\rb^n}=\th^nr^m,
\end{multline*}
where we denote $\th:=\frac{1+y^*_0(1)}{1+y_0(1)}\in(0,1)$.
Therefore, in view of \eqref{eq:step 11}, we have
\be \label{eq:bound of quotience1} 
|\G_1'-\G_1| \leq \th^nr^m\lnor\pi_1\rnor\prod_{1\leq i\leq m, t_{i-1}<t_i}\frac{K}{(t_i-t_{i-1})^2}. 
\ee 
By \eqref{eq:G1'G2'} and \eqref{eq:G1G2} we also have:
$$\G_2'-\G_2=\frac{1}{\lb 2+2y_0(1)\rb^n}\int_{U_1\setminus\{y_0(1)\}}(2+2x)^n\pi_1(\d x).$$
Using a similar but simpler argument to the one above, we conclude
\[ 
|\G_2'-\G_2| \leq\th^n\lnor\pi_1\rnor.
\] 

We recall that $\G_2=\pi_1\lb\{y_0(1)\}\rb$.
There exists $N=N(A,B,C,D,q)\in\ZZ_+$ such that for any $n\geq N$, we have 
\be \label{eq:bound of quotience2}|\G_2|=\lv\pi_1\lb\{y_0(1)\}\rb\rv>\th^n\lnor \pi_1\rnor\geq|\G_2'-\G_2|.\ee  
Combining the estimates \eqref{eq:gamma bound}, \eqref{eq:bound on G1 over G2}, \eqref{eq:bound of quotience1} and \eqref{eq:bound of quotience2},  we get that for any $n\geq N$, $1\leq m< n$ and $1=t_0\leq t_1\leq\dots\leq t_m<1+\ep$, 
\be\label{eq:estimate to norm in proof}
\left\lvert \frac{\G'_1}{\G'_2}-\frac{\G_1}{\G_2} \right\rvert
   \leq\frac{2\th^n r^m \lnor\pi_1\rnor }{\lv\pi_1\lb\{y_0(1)\}\rb\rv-\th^n\lnor \pi_1\rnor}\prod_{1\leq i\leq m, t_{i-1}<t_i}\frac{K}{(t_i-t_{i-1})^2}.\ee          
For fixed $m$, as $n\rightarrow\infty$, the RHS above converges to $0$. In view of \eqref{eq:generating as G1'G2'} and the definitions of $\G_1$ and $\G_2$ (so that the RHS of \eqref{eq:only need to show step 1} equals $\G_1/\G_2$), we conclude the proof of the convergence \eqref{eq:only need to show step 1}. 
Step 1 is complete.\\

{\bf \raggedleft \underline{Step 2.}} 
In this step, under the extra assumption  that $q^lABCD\neq1$ for all $l\in\NN$, we prove \eqref{eq:TV bound final}: there exists  $H>0$ depending on $A,B,C,D$ and $q$  such that for all  $1\leq m\leq n$,
\be\label{eq:only need step 2}
\dtv\lb\mu_{n,m},\eta_m\rb\leq\th^n (Hm)^{3m}, 
\ee
where we recall $\th =\frac{1+y^*_0(1)}{1+y_0(1)}\in(0,1)$.

To bound the total variation distance of two probability measures on $\{0,1\}^m$, by Proposition \ref{prop:bounding distance two prob measures1}, we only need to bound the difference between their generating functions. 
In view of \eqref{eq:estimate to norm in proof},  we have:
\be \label{eq:bound with generating functions}
    \left\lvert \EE_{\mu_{n,m}}\le\prod_{i=1}^mt_i^{\tau_i}\re-\EE_{\eta_m}\le\prod_{i=1}^mt_i^{\tau_i}\re\right\rvert 
   \leq\frac{2\th^n r^m \lnor\pi_1\rnor }{\lv\pi_1\lb\{y_0(1)\}\rb\rv-\th^n\lnor \pi_1\rnor}\prod_{1\leq i\leq m, t_{i-1}<t_i}\frac{K}{(t_i-t_{i-1})^2}, 
\ee
for any  $n\geq N$, $1\leq m<n$ and $1=t_0\leq t_1\leq\dots\leq t_m<1+\ep$.  
We set $$t_{i,\up,m}:=1+\frac{(i+\up)\ep}{2m}\quad\text{for }  i=1,\dots,m  \text{ and } \up\in\{0,1\}.$$  For any $\up_1,\dots,\up_m\in\{0,1\}$, we take $t_i=t_{i,\up_i,m}$ for $i=1,\dots,m$ in \eqref{eq:bound with generating functions}. In view of the fact that $$t_i-t_{i-1}\in\left\{0,\frac{\ep}{2m},\frac{\ep}{m}\right\},\quad   i=1,\dots,m,$$ we have:
\be \label{eq:generating function bound in proof 2}
\left\lvert \EE_{\mu_{n,m}}\le\prod_{i=1}^mt_{i,\up_i,m}^{\tau_i}\re-\EE_{\eta_m}\le\prod_{i=1}^mt_{i,\up_i,m}^{\tau_i}\re\right\rvert
\leq     
\frac{2\th^n r^m\lnor\pi_1\rnor}{ \lv\pi_1\lb\{y_0(1)\}\rb\rv-\th^n\lnor \pi_1\rnor}
\frac{K^m}{(\ep/(2m))^{2m}}.
\ee 
Using Proposition \ref{prop:bounding distance two prob measures1}, in view of 
$$1+t_{i,1,m}<2+\ep,\quad t_{i,1,m}-t_{i,0,m}=\frac{\ep}{2m},\quad   i=1,\dots,m,$$
  we have:
\be  \label{eq:bound TV distance in proof continuity}
\dtv\lb\mu_{n,m},\eta_m\rb\leq \frac{ \th^n r^m\lnor\pi_1\rnor}{\lv\pi_1\lb\{y_0(1)\}\rb\rv-\th^n\lnor \pi_1\rnor} \frac{K^m}{(\ep/(2m))^{2m}} \lb\frac{2+\ep}{\ep/(2m)}\rb^m2^m.
\ee 
One can choose $H=H(A,B,C,D,q)>0$ sufficiently large so that the RHS of \eqref{eq:bound TV distance in proof continuity} can be bounded above by $\th^n (Hm)^{3m}$ for any $n\geq N$ and  $1\leq m< n$. 
For the cases when $n < N$ or $m = n$, one can increase the value of $H$ if necessary, so that $\theta^n (Hm)^{3m} \geq 1$, which  is greater or equal to $\dtv\lb\mu_{n,m},\eta_m\rb\leq1$.   
We conclude the proof of \eqref{eq:only need step 2}. Step 2 is complete.\\

{\bf \raggedleft \underline{Step 3.}}   In this step we show that the total variation distance bound \eqref{eq:only need step 2} continues to hold without the assumption that $q^lABCD\neq1$ for all $l\in\NN$. Note that the well-definedness of the measure $\eta_m$ (which we concluded from Step 1) does not have any issues, since it only involves $A,B,C$ and $q$.

Before we embark on the major component of Step 3, we present the following technical total variation bound of the Askey--Wilson signed measure $\pi_1$ by the mass of its largest atom:
\begin{proposition}\label{prop:bound on pi11}
   Assume $A,C\geq0$, $\max(A,C)>1$, $-1<B,D\leq 0$, $q\in[0,1)$ and $|(ABCD)_{\infty}|\leq1$. Assume also  $A/C\notin\{q^l:l\in\ZZ\}$ if $A,C\geq 1$. Then there exists a positive constant $L$ depending on $A,C$ and $q$ such that $$\lnor\pi_1\rnor\leq \frac{L}{1-BD}\lv\pi_1\lb\{y_0(1)\}\rb\rv.$$
\end{proposition}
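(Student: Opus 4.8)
The plan is to split $\pi_1=\nu(\d x;A,B,C,D)$ into its absolutely continuous part and its finitely many atoms, control the continuous part by the fact that the total mass is $1$, bound each atom mass by $|p(y_0(1))|$, and finally show that $|p(y_0(1))|/(1-BD)$ is bounded below. First I would use the invariance of the Askey--Wilson signed measure under permutations of its four parameters: since $1-BD$ and all the hypotheses are preserved by the swap $(A,B,C,D)\mapsto(C,D,A,B)$, I may assume $A>C\geq0$ and $A>1$ (the hypotheses force $A\neq C$, as $A=C>1$ would give $A/C=1\in\{q^l\}$ while $A=C\leq1$ would violate $\max(A,C)>1$). Then $y_0(1)=\frac12(A+A^{-1})=y_0^{\aa}$, so $p(y_0(1))=p_0^{\aa}(A,B,C,D)$; I may also assume $q^lABCD\neq1$ for all $l$, so that $\pi_1$ is well-defined by \eqref{eq:AW signed} and, together with the hypotheses, $(A,B,C,D)\in\Omega$.

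For the continuous part I would observe that in \eqref{eq:continuous part density} every $q$-Pochhammer symbol in the numerator other than $(AC)_\infty$ is strictly positive (its argument is $\leq0$, making a product of factors $\geq1$, or lies in $[0,1)$), while $|(e^{2\i\theta})_\infty/(Ae^{\i\theta},Be^{\i\theta},Ce^{\i\theta},De^{\i\theta})_\infty|^2/\sqrt{1-x^2}$ is nonnegative on $(-1,1)$. Hence $f$ is either identically $0$ or of constant sign on $(-1,1)$ (namely the sign of $(AC)_\infty/(ABCD)_\infty$), so $\int_{-1}^1|f|\,\d x=\bigl|\int_{-1}^1 f\,\d x\bigr|$; and since $\int_{\RR}\pi_1=1$ we have $\int_{-1}^1 f\,\d x=1-\sum_{\text{atoms}}p$, whence $\int_{-1}^1|f|\,\d x\leq1+\sum_{\text{atoms}}|p|$ and therefore $\lnor\pi_1\rnor\leq1+2\sum_{\text{atoms}}|p|$.

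Next I would prove $\sum_{\text{atoms}}|p|\leq C_1|p_0^{\aa}|$ with $C_1=C_1(A,C,q)$. The atoms are $y_k^{\aa}$ for $0\leq k\leq k_A:=\lfloor\log_{1/q}A\rfloor$ and, if $C>1$, $y_k^{\cc}$ for $0\leq k\leq k_C:=\lfloor\log_{1/q}C\rfloor$ (the $\cc$-atom is absent, or has mass $0$, when $C\leq1$), so there are boundedly many. From \eqref{eq: p_j}, $|p_k^{\aa}/p_0^{\aa}|$ is a ratio of finitely many $q$-Pochhammer symbols and linear factors in $A,B,C,D$; using $-1<B,D\leq0$ one checks the numerator factors $|(A^2,AB,AC,AD)_k|$ and $|1-A^2q^{2k}|$ are bounded above by constants depending only on $A,C,q$ (e.g.\ $|(AB)_k|\leq\prod_{j\geq0}(1+Aq^j)$ since $|AB|<A$), while in the denominator $(q)_k\geq(q;q)_\infty$, $A^k\geq1$, $|B-q^lA|=q^lA+|B|\geq q^kA\geq1$ and similarly $|D-q^lA|\geq1$ for $1\leq l\leq k\leq k_A$, and $\prod_{l=1}^k|C-q^lA|$ is a bounded-length product of nonzero factors (nonzero since $C=q^lA$ would force $A/C=q^{-l}\in\{q^m\}$ with $A,C\geq1$), hence $\geq\delta_C>0$. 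For the cross-ratio $p_0^{\cc}/p_0^{\aa}$ (relevant when $C>1$) the factors $(BD)_\infty$ and $(ABCD)_\infty$ cancel, and every remaining $q$-Pochhammer symbol has a positive lower and finite upper bound in terms of $A,C,q$, using $A>C>1>|B|,|D|$, $A/C>1$ with $A/C\notin\{q^l\}$ (so $(A/C)_\infty\neq0$) and $0<A^{-2}<1$ (so $(A^{-2})_\infty\neq0$); the ratios $|p_k^{\cc}/p_0^{\cc}|$ are handled exactly as in the $\aa$ case. Multiplying and summing these finitely many bounds yields the claim.

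Finally, and this is where the hypothesis $|(ABCD)_\infty|\leq1$ enters, I would bound $|p_0^{\aa}|/(1-BD)$ below by a constant $c_0=c_0(A,q)>0$. Writing $(BD)_\infty=(1-BD)(BDq;q)_\infty$ and using $0\leq BDq^j\leq q^j$ to get $(BDq;q)_\infty\geq(q;q)_\infty$, one finds
\[
\frac{|p_0^{\aa}|}{1-BD}=\frac{|(A^{-2})_\infty(BC)_\infty(CD)_\infty|\,(BDq;q)_\infty}{|(B/A)_\infty(C/A)_\infty(D/A)_\infty(ABCD)_\infty|}\geq\frac{(A^{-2};q)_\infty(q;q)_\infty}{\bigl(\prod_{j\geq0}(1+q^j)\bigr)^2}=:c_0,
\]
since $|(BC)_\infty|,|(CD)_\infty|\geq1$, $|(C/A)_\infty|\leq1$ (as $0\leq C/A<1$), $|(B/A)_\infty|,|(D/A)_\infty|\leq\prod_{j\geq0}(1+q^j)$ (as $|B|/A,|D|/A<1$), and $|(ABCD)_\infty|\leq1$. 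This gives $1\leq c_0^{-1}|p_0^{\aa}|/(1-BD)$, and since also $|p_0^{\aa}|\leq|p_0^{\aa}|/(1-BD)$ (because $0<1-BD\leq1$), combining with $\lnor\pi_1\rnor\leq1+2C_1|p_0^{\aa}|$ from above gives $\lnor\pi_1\rnor\leq(c_0^{-1}+2C_1)\,|p_0^{\aa}|/(1-BD)$, i.e.\ the asserted bound with $L=c_0^{-1}+2C_1$ depending only on $A,C,q$. I expect the main obstacle to be the third paragraph: carrying out the atom-mass ratio estimates carefully enough to check that all the $B,D$-dependent factors only improve the bounds, so that $C_1$ (hence $L$) can be taken independent of $B,D$.
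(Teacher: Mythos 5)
Your proposal is correct and follows essentially the same route as the paper: reduce to $A>C$, $A>1$ by the parameter symmetry, bound $\lnor\pi_1\rnor$ by $1+2\sum|p|$ using the constant sign of the continuous density and total mass $1$, extract the factor $1-BD$ from $(BD)_\infty$ to lower bound the top atom mass, upper bound the remaining atom masses via $-1<B,D\leq0$, and invoke $|(ABCD)_\infty|\leq1$ to absorb the leftover term. The only (cosmetic) difference is that you cancel the $(ABCD)_\infty$ and $(BD)_\infty$ factors upfront by estimating the ratios $|p_k^{\eee}/p_0^{\aa}|$, whereas the paper carries $1/|(ABCD)_\infty|$ through both the upper and lower bounds and cancels it at the end.
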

\begin{remark}
    The above result will appear again as Proposition \ref{prop:bound on pi1} which will be proved in Appendix \ref{sec:total variation bounds} by bounding the norms of  atom masses in this Askey--Wilson signed measure.
\end{remark}

We return to Step 3 in the proof. We consider the case $q^jABCD=1$ for some $j\in\NN$.
We will use the continuity argument. We take a sequence of $D_k\in(-1,0)$, $k=1,2,\dots$ that converges to $D$, satisfying $q^lABCD_k\neq1$ for all $l\in\NN$ and for all $k$. Since $(ABCD)_{\infty}=0$, we can assume that $|(ABCD_k)_{\infty}|\leq 1$ for all $k$. Using the continuity of the stationary measure $\mu_n^{(A,B,C,D)}$ from Lemma \ref{lem:continuity}, we have:
\be\label{eq:1}\lim_{k\rightarrow\infty}\dtv\lb\mu^{(A,B,C,D_k)}_{n,m},\eta_m\rb=\dtv\lb\mu^{(A,B,C,D)}_{n,m},\eta_m\rb.\ee
Using the conclusion of Step 2, for each $k=1,2,\dots$ one can bound 
$\dtv\lb\mu^{(A,B,C,D_k)}_{n,m},\eta_m\rb$  
   by the corresponding RHS of \eqref{eq:bound TV distance in proof continuity}. We observe that most of the terms therein do not rely on $k$:
\begin{enumerate}
    \item [$\bullet$] The values of $\ep$ and $K$ were selected by Proposition \ref{prop:TV bound of AW signed measures1}, which only involve $A,B,C$ and $q$ and hence do not rely on $k$.
    \item [$\bullet$] The values of $\th=\frac{1+y^*_0(1)}{1+y_0(1)}$ and $r= \sup_{t\in I} \lb 1+t+2\sqrt{t}y_0(t)\rb $ depend solely on $\ep,A,C$  and $q$ (recall that $I=[1,1+\ep)$) and hence do not rely on $k$. 
\end{enumerate}
Therefore, in view of \eqref{eq:1}, we have:
\begin{equation*}
    \begin{split}
        \dtv\lb\mu^{(A,B,C,D)}_{n,m},\eta_m\rb\leq&\th^n r^m\frac{K^m}{(\ep/(2m))^{2m}} \lb\frac{2+\ep}{\ep/(2m)}\rb^m2^m  \\
        &\times\limsup_{k\rightarrow\infty}\frac{  \lnor\pi^{(A,B,C,D_k)}_1\rnor}{\lv\pi^{(A,B,C,D_k)}_1\lb\{y_0(1)\}\rb\rv-\th^n\lnor \pi^{(A,B,C,D_k)}_1\rnor}.
    \end{split}
\end{equation*} 
Using Proposition \ref{prop:bound on pi11} above, in view of our assumption $|(ABCD_k)_{\infty}|\leq 1$ and the fact that $1/(1-BD_k)$ is bounded by a uniform constant for all $k=1,2,\dots$, we conclude that there exists $N$ independent of $k$ such that for all $n\geq N$, the supremum limit in the RHS above is a finite positive number. Therefore the bound \eqref{eq:only need step 2}:
$$\dtv\lb\mu_{n,m},\eta_m\rb\leq\th^n (Hm)^{3m}$$
holds at $(A,B,C,D)$ for some positive number $H=H(A,B,C,D)$ for $n\geq N$ and $1\leq m<n$.  
Similar to the end of Step 2, for $n<N$ or $m=n$, one can increase the value of $H$ if necessary so that $\dtv\lb\mu_{n,m},\eta_m\rb \leq1$ can still be bounded by $\th^n (Hm)^{3m}$.
Step 3 is complete.

In view of Step 1, Step 2 and Step 3 above, we conclude the proof of Theorem \ref{thm:modulo particle hole}.
\end{proof}

In the rest of this subsection we will use Theorem \ref{thm:modulo particle hole} and the particle-hole duality to deduce Theorem \ref{thm:LD} and Theorem \ref{thm:HD}.   
    We recall the marginal distributions $\mu_{n|m}$ and  $\mu_{n,m}$ of $\mu_n$ and Remark \ref{rmk:mu}. In view of the particle-hole duality stated as Lemma \ref{lem:particle hole}, we have:
    \be\label{eq:mu and mu}
    \mu_{n|m}^{(A,B,C,D)}(\tau_1,\dots,\tau_m)=\mu_{n,m}^{(C,D,A,B)}(\tau_m,\dots,\tau_1)
    \quad\text{for any } \tau_1,\dots,\tau_m\in\{0,1\}.
    \ee

The next technical lemma shows that the total variation distance bound  $\th^n(Hm)^{3m}$ induces the correct growth rate of  $\{m_n\}_{n=1}^{\infty}$. Note that we only need  $p=3$ in this lemma.  
\begin{lemma}\label{lem:tech}
    For any $\th\in(0,1)$, $H>0$ and $p>0$, we denote $s=-\frac{1}{p}\log\th>0$. Then for any sequence $\{m_n\}_{n=1}^{\infty}$ satisfying $1\leq m_n\leq s\frac{n}{\log n}$ for $n=1,2,\dots$, we have $$\lim_{n\rightarrow\infty}\th^n(Hm_n)^{pm_n}=0.$$ 
\end{lemma}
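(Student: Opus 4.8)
The plan is to reduce the statement $\th^n(Hm_n)^{pm_n}\to 0$ to showing that the logarithm of this quantity tends to $-\infty$. Taking logarithms, we have
$$
\log\!\lb\th^n(Hm_n)^{pm_n}\rb = n\log\th + pm_n\log(Hm_n) = -psn + pm_n\bigl(\log H + \log m_n\bigr),
$$
using $\log\th = -ps$. So it suffices to prove that $m_n\log m_n + m_n\log H - sn \to -\infty$, and since $m_n\leq s n/\log n$ implies $m_n\log H = o(n)$ (indeed $m_n/n\to 0$), the real task is to control $m_n\log m_n$ against $sn$.

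The key step is the bound $m_n\log m_n \leq sn$ for all large $n$, which should follow directly from the hypothesis $m_n\leq sn/\log n$. First I would substitute this in: $m_n\log m_n \leq \dfrac{sn}{\log n}\,\log\!\lb\dfrac{sn}{\log n}\rb = \dfrac{sn}{\log n}\bigl(\log s + \log n - \log\log n\bigr) \leq \dfrac{sn}{\log n}\cdot\log n\,(1 + o(1))$, which is $sn(1+o(1))$. More precisely, for $n$ large enough that $\log s - \log\log n \leq 0$ (which holds once $\log n \geq s$, i.e.\ $n\geq e^s$), we get $m_n\log m_n \leq sn$ exactly. Hence for such $n$,
$$
\log\!\lb\th^n(Hm_n)^{pm_n}\rb = -psn + pm_n\log H + pm_n\log m_n \leq -psn + pm_n\log H + psn = pm_n\log H,
$$
wait — this only gives an upper bound of $pm_n\log H$, which need not go to $-\infty$ if $H>1$. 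So I need to be slightly more careful and extract a genuinely negative main term.

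The fix is to not use the full strength $m_n\log m_n\leq sn$ but rather a strict version with room to spare. Since $\dfrac{sn}{\log n}(\log s + \log n - \log\log n) = sn\lb 1 + \dfrac{\log s - \log\log n}{\log n}\rb$ and $\dfrac{\log\log n}{\log n}\to\infty$... no wait, $\dfrac{\log\log n}{\log n}\to 0$. Let me instead simply note: $m_n\log m_n + m_n\log H = m_n\log(Hm_n) \leq \dfrac{sn}{\log n}\log\!\lb\dfrac{Hsn}{\log n}\rb$. For $n$ large, $\log\!\lb\dfrac{Hsn}{\log n}\rb \leq \log(Hsn) \leq 2\log n$, say, so $m_n\log(Hm_n)\leq 2sn\cdot\dfrac{\log n}{\log n}$... that gives $2sn$, which is worse than $psn$ only if $p<2$. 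The clean route: for $n$ large, $\log\!\lb\dfrac{Hsn}{\log n}\rb = \log(Hs) + \log n - \log\log n \leq \log n$ once $\log\log n \geq \log(Hs)$, i.e.\ $n \geq e^{Hs}$. Then $m_n\log(Hm_n) \leq \dfrac{sn}{\log n}\cdot\log n = sn$, hence $pm_n\log(Hm_n)\leq psn$, and we are back to the non-strict bound $\log\!\lb\th^n(Hm_n)^{pm_n}\rb \leq -psn + psn = 0$. To get $\to-\infty$ I would replace the hypothesis bound by splitting: write $m_n\log(Hm_n) = m_n\log m_n + m_n\log H$; bound $m_n\log m_n \leq \dfrac{sn}{\log n}(\log s + \log n - \log\log n)$ and observe this equals $sn - \dfrac{sn(\log\log n - \log s)}{\log n}$, so that $-psn + pm_n\log m_n \leq -\dfrac{psn(\log\log n - \log s)}{\log n} \to -\infty$ (since $\log\log n - \log s \to \infty$ while it grows slower than $\log n$, the ratio times $n$ still diverges because $n$ beats $\log n / \log\log n$). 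Adding the lower-order term $pm_n\log H \leq ps n\cdot\dfrac{\log H}{\log n} = o(n)$, which is dominated by the diverging negative term, completes the proof. The only mild obstacle is bookkeeping these asymptotics carefully; there is no conceptual difficulty. I would present it by fixing $n$ large enough that both $\log\log n \geq 1 + \log s$ and $\log H \leq \log\log n$ hold, so that $pm_n\log(Hm_n) \leq \dfrac{psn}{\log n}\bigl(2\log\log n + \log n - \log\log n\bigr) = psn + \dfrac{psn\log\log n}{\log n} - $ hmm, this is going the wrong way again; cleanest is simply: $pm_n\log(Hm_n) \leq psn - \dfrac{psn(\log\log n - \log(Hs))}{\log n}$, and since $\log\log n - \log(Hs)\to+\infty$, the subtracted quantity $\dfrac{psn(\log\log n - \log(Hs))}{\log n}$ tends to $+\infty$ (as $n/\log n\to\infty$ and $\log\log n - \log(Hs)\to\infty$), giving $\log\!\lb\th^n(Hm_n)^{pm_n}\rb \to -\infty$ and hence $\th^n(Hm_n)^{pm_n}\to 0$.
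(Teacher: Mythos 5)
Your final argument is correct, and it reaches the conclusion by a route genuinely different from the paper's. You take logarithms and show $\log\lb\th^n(Hm_n)^{pm_n}\rb\leq -\frac{psn(\log\log n-\log(Hs))}{\log n}\to-\infty$ by explicitly extracting the $\log\log n$ slack in the bound $m_n\log m_n\leq \frac{sn}{\log n}(\log s+\log n-\log\log n)$. The paper never takes logarithms: it introduces an auxiliary constant $R>0$, shows $(Rm_n)^{pm_n}\leq n^{psn/\log n}=\th^{-n}$ for $n$ large (absorbing the factor $Rs/\log n\leq 1$), concludes $\limsup_n\th^n(Hm_n)^{pm_n}\leq (H/R)^p$ using $m_n\geq 1$, and then lets $R\to\infty$. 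Both proofs rest on the same core estimate $\frac{sn}{\log n}\log\lb\frac{\const\cdot n}{\log n}\rb\leq sn$, but the finishing mechanisms differ: the paper's $R$-trick avoids all asymptotic bookkeeping at the cost of being non-quantitative, whereas your version actually yields an explicit decay rate $\exp\lb-\frac{psn(\log\log n-\log(Hs))}{\log n}\rb$, which is strictly more information. Two small points to fix in a written version. First, the inequality $m_n\log(Hm_n)\leq\frac{sn}{\log n}\log\lb\frac{Hsn}{\log n}\rb$ uses monotonicity of $x\mapsto x\log(Hx)$ on $[1,\infty)$, which requires $H\geq 1/e$; for small $H$ the inequality can fail (e.g.\ $H=e^{-10}$, $m_n=1$), but then $m_n\log H<0$ and you may simply drop that term and run the same estimate with $\log s$ in place of $\log(Hs)$, so this is a one-line case distinction rather than a gap. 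Second, the proposal as written contains several abandoned attempts before arriving at the correct bound; only the last paragraph's estimate should survive into the final text.
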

\begin{proof} For any constant $R>0$, for sufficiently large $n$, we have:
\begin{multline*}
    (Rm_n)^{pm_n}\leq\lb\frac{Rsn}{\log n}\rb^{\frac{psn}{\log n}}=\lb\frac{Rs}{\log n}n\rb^{\frac{psn}{\log n}}\leq n^{\frac{psn}{\log n}}\\
=n^{\frac{n\log\lb\frac{1}{\th}\rb}{\log n}}=\lb n^{\frac{\log\lb\frac{1}{\th}\rb}{\log n}}\rb^n=\lb n^{\log_n\lb\frac{1}{\th}\rb}\rb^n=\lb\frac{1}{\th}\rb^n,
\end{multline*}
hence     $$\overline{\lim}_{n\rightarrow\infty}\th^n(Hm_n)^{pm_n}\leq\overline{\lim}_{n\rightarrow\infty}(H/R)^{pm_n}\leq (H/R)^p.$$ Taking $R\rightarrow\infty$ we conclude the proof.
\end{proof}

We now prove the main Theorem \ref{thm:LD} in the low density phase.
\begin{proof}[Proof of Theorem \ref{thm:LD}]
    Although this result is about the low density phase, we will use Theorem \ref{thm:modulo particle hole} in the high density phase and later use particle-hole duality.
    
    In the high density phase $A>1$, $A>C$, we   assume $C/A\notin\{q^l:l\in\ZZ_+\}$ if $C\geq1$ and analyze the measure $\eta_m$ introduced in Definition \ref{def:measure etam}. In view of the second statement in Lemma \ref{lem:cite},  we have $$P_{1,t_1}\lb y_0(1),\d x_1\rb\dots P_{t_{m-1},t_m}\lb x_{m-1},\d x_m\rb=\delta_{y_0(t_1)}(\d x_1)\dots\delta_{y_0(t_m)}(\d x_m).$$  We notice   $y_0(t)=\frac{1}{2}\lb A\sqrt{t}+ \frac{1}{A\sqrt{t}}\rb$ and therefore
    $$\frac{1+t_i+2\sqrt{t_i}y_0(t)}{2+2y_0(1)}=\frac{1+At_i}{1+A}.$$ Hence the RHS of \eqref{eq:def of eta} equals $\prod_{i=1}^n\frac{1+At_i}{1+A}$, using which we conclude that $\eta_m=\ber_m\lb\frac{A}{1+A}\rb$. 
    
    For the total variation distance bound, by Theorem \ref{thm:modulo particle hole} we have
    $$\dtv\lb\mu_{n,m},\ber_m\lb\frac{A}{1+A}\rb\rb\leq\th^n (Hm)^{3m}.$$ 
    In view of particle-hole duality \eqref{eq:mu and mu}, in the low density phase $C>1$, $C>A$, assuming that $A/C\notin\{q^l:l\in\ZZ_+\}$ if $A\geq1$, we have that for any $1\leq m\leq n$,
    \be\label{eq:rvwe}\dtv\lb\mu_{n|m},\ber_m\lb\frac{1}{1+C}\rb\rb\leq\th^n (Hm)^{3m}. \ee 
    We conclude the total variation bound \eqref{eq:bound LD} in Theorem \ref{thm:LD}. The last statement in the theorem (about the sequence $\{m_n\}$) follows directly from this bound combining with technical Lemma \ref{lem:tech}.
    
    For fixed $m$, we note that the RHS of \eqref{eq:rvwe} converges to $0$ as $n\rightarrow\infty$. In particular, the sequence of measures $\mu_{n|m}$ converges to $\ber_m\lb\frac{1}{1+C}\rb$. Next we show this convergence holds true without the assumption $A/C\notin\{q^l:l\in\ZZ_+\}$ for $A\geq1$. 
    We fix the jump rates $\alpha,\beta,\gamma,q$ and choose $\delta'>\delta$ so that $A':=\phi_+(\beta,\delta')$ satisfy $A'/C\notin\{q^l:l\in\ZZ_+\}$. 
    In view of the stochastic sandwiching Lemma \ref{lem:sandwiching}, we have 
     \be \label{eq:wre}\EE_{\mu_{n|m}^{(A',B,C,D)}}\le\prod_{i=1}^mt_i^{\tau_i}\re\geq\EE_{\mu_{n|m}^{(A,B,C,D)}}\le\prod_{i=1}^mt_i^{\tau_i}\re,\ee
     for any $t_1,\dots,t_m\geq1$.
    As $n\rightarrow\infty$, we know that $\mu_{n|m}^{(A',B,C,D)}$ converges to $\ber_m\lb\frac{1}{1+C}\rb$, therefore the  LHS of \eqref{eq:wre} converges to $$\EE_{\ber_m\lb\frac{1}{1+C}\rb}\le\prod_{i=1}^mt_i^{\tau_i}\re=\prod_{i=1}^m\frac{t_i+C}{1+C}.$$   We conclude
    $$ \prod_{i=1}^m\frac{t_i+C}{1+C}\geq\limsup_{n\rightarrow\infty}\EE_{\mu_{n|m}^{(A,B,C,D)}}\le\prod_{i=1}^mt_i^{\tau_i}\re.$$ 
    By the same reason, the opposite inequality holds true. Therefore the convergence holds true for $(A,B,C,D)$.     We conclude that on the entire high density phase, the sequence of measures $\mu_{n}$ converges weakly to the product Bernoulli measure with density $\frac{1}{1+C}$ on $\{0,1\}^{\ZZ_+}$. The proof is concluded.
\end{proof}

Before we provide the proof of Theorem \ref{thm:HD}, we present a special symmetry of multi-dimensional Askey--Wilson signed measures known as the `time reversal', which will be needed in the proof.
\begin{proposition}\label{prop:time reversal1}
    Assume $q\in[0,1)$ and $A,B,C,D\in\RR$.   We have:
    \be 
    \pi_{t_1,\dots,t_m}^{(A,B,C,D)}\lb\d x_1,\dots,\d x_m\rb=
    \pi_{1/t_m,\dots,1/t_1}^{(C,D,A,B)}\lb\d x_m,\dots,\d x_1\rb
    \ee
    for any $0<t_1\leq\dots\leq t_m$ for which the multi-dimensional Askey--Wilson signed measures on both sides of this identity are well-defined. 
\end{proposition}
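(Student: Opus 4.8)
The plan is to prove the identity by establishing it first at the level of the one-dimensional marginal and transition measures, and then propagating it through the chain structure \eqref{eq:definition of multi-time}. The key observation is that the claimed symmetry interchanges the roles of the pair $(A,B)$ with $(C,D)$, interchanges the time interval $(0,1]$ with its reciprocal image, and reverses the order of the times. Concretely, I would first verify the base case: that $\pi_t^{(A,B,C,D)}(\d x) = \pi_{1/t}^{(C,D,A,B)}(\d x)$. Unwinding \eqref{eq:marginal AW signed}, the left side is $\nu(\d x; A\sqrt t, B\sqrt t, C/\sqrt t, D/\sqrt t)$ and the right side is $\nu(\d x; C/\sqrt t, D/\sqrt t, A\sqrt t, B\sqrt t)$, so this reduces to the fact that the Askey--Wilson signed measure $\nu(\d x;a,b,c,d)$ is symmetric under swapping the ordered pair $(a,b)$ with the ordered pair $(c,d)$ — which is visible from the density formula \eqref{eq:continuous part density} (manifestly symmetric in all four parameters) and from the structure of the atom set $F(a,b,c,d)$ and masses \eqref{eq: p_0}–\eqref{eq: p_j} (the atoms are generated by whichever parameters have modulus $\ge 1$, regardless of position, and the mass formulas are invariant under the relevant relabeling).

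Next I would treat the transition kernels. The quantity $P_{s,t}^{(A,B)}(x,\d y)$ from \eqref{eq:transition AW signed} is $\nu(\d y; A\sqrt t, B\sqrt t, \sqrt{s/t}(x+\sqrt{x^2-1}), \sqrt{s/t}(x-\sqrt{x^2-1}))$. Under the substitution $s \mapsto 1/t$, $t\mapsto 1/s$ (so that $s/t$ is preserved and $\sqrt t \mapsto 1/\sqrt s$), this becomes $\nu(\d y; A/\sqrt s, B/\sqrt s, \sqrt{s/t}(x\pm\sqrt{x^2-1}))$, and using the same pair-swap symmetry of $\nu$, this equals $P^{(A,B)}_{1/t,1/s}(x,\d y)$ with the "$(A,B)$" parameters now sitting in the $c,d$ slots — i.e. it matches the transition kernel that appears in the reversed chain for $\pi^{(C,D,A,B)}$. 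I would state this cleanly as a lemma: for $s \le t$ and $x$ in the common support, $P_{s,t}^{(A,B)}(x,\d y)=P_{1/t,1/s}^{(A,B)}(x,\d y)$ in the sense that both are the $\nu$-measure with the $A,B$-parameters and the $x$-parameters each rescaled appropriately; the point is that these are literally the same Askey--Wilson signed measure.

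With these two ingredients, the final step is a telescoping / reversal argument. Write out $\pi_{t_1,\dots,t_m}^{(A,B,C,D)}$ as the product $\pi_{t_1}^{(A,B,C,D)}(\d x_1)\prod_{i=2}^m P_{t_{i-1},t_i}^{(A,B)}(x_{i-1},\d x_i)$ and, on the other side, $\pi_{1/t_m,\dots,1/t_1}^{(C,D,A,B)}(\d x_m,\dots,\d x_1) = \pi_{1/t_m}^{(C,D,A,B)}(\d x_m)\prod_{i=2}^m P_{1/t_{m-i+2},\,1/t_{m-i+1}}^{(C,D)}(x_{m-i+2},\d x_{m-i+1})$. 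After reindexing the second product, one checks factor by factor that the initial marginal on one side matches the (reindexed) initial marginal on the other via the base case, and each transition factor matches via the kernel lemma — here it is essential that the transition kernels, unlike the marginal, only involve the pair $(A,B)$ (resp. $(C,D)$), so no "$C,D$ vs $A,B$" conflict arises in the bulk of the chain. One subtlety to address is that \eqref{eq:definition of multi-time} builds the measure as an iterated integral (a "path" construction), so I should either phrase everything in terms of the iterated-kernel definition directly, or note that both sides are built by the same recursive prescription and invoke uniqueness of that construction.

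The main obstacle I expect is purely bookkeeping: carefully matching up the reversed indices and the reciprocal times so that each transition factor in the reversed chain is genuinely the same Askey--Wilson signed measure as the corresponding forward factor, including checking that the supports $U_t$ transform correctly under $t\mapsto 1/t$ (so that the domains of integration match) and that the "well-definedness" hypotheses — membership in the parameter set $\Omega$ of Definition \ref{def:AW} — are symmetric under the pair-swap, which they are since the conditions in Definition \ref{def:AW} are all symmetric in $\{a,b,c,d\}$. There is no deep analytic difficulty; the content is entirely in the elementary invariance of $\nu(\d x;a,b,c,d)$ under permuting its parameters combined with the self-reciprocal structure of the time parametrization.
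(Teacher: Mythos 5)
Your base case ($m=1$) is correct and is exactly how the paper starts: the single-time identity reduces to the symmetry of $\nu(\d x;a,b,c,d)$ in its four parameters. But the rest of the plan does not work. First, the ``kernel lemma'' as you state it is false: $P_{s,t}^{(A,B)}(x,\d y)=\nu\lb\d y;A\sqrt{t},B\sqrt{t},\sqrt{s/t}(x\pm\sqrt{x^2-1})\rb$, whereas $P_{1/t,1/s}^{(A,B)}(x,\d y)=\nu\lb\d y;A/\sqrt{s},B/\sqrt{s},\sqrt{s/t}(x\pm\sqrt{x^2-1})\rb$; the first two entries differ unless $st=1$, so these are not the same Askey--Wilson signed measure. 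Second, and more fundamentally, the factor-by-factor matching cannot succeed because the two sides are factorized in opposite conditioning directions: the left side is (marginal of $x_1$) $\times$ (kernel from $x_1$ to $x_2$) $\times\cdots$, while the right side is (marginal of $x_m$) $\times$ (kernel from $x_m$ to $x_{m-1}$) $\times\cdots$. Already the initial factors, $\pi_{t_1}^{(A,B,C,D)}(\d x_1)$ and $\pi_{1/t_m}^{(C,D,A,B)}(\d x_m)$, concern different coordinates and cannot be matched by the one-dimensional base case. This is the same obstruction as trying to prove a Markov chain equals its time reversal by equating transition kernels termwise: the statement is a detailed-balance-type identity about the joint measure, not an identity of kernels.

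The genuine content of the proposition is therefore the two-time case
$\pi_{s,t}^{(A,B,C,D)}(\d x,\d y)=\pi_{1/t,1/s}^{(C,D,A,B)}(\d y,\d x)$,
which asserts that one and the same joint signed measure admits both conditional factorizations. The paper proves this by integrating the joint measure against products $w_j(y;\cdot)\,w_k(x;\cdot)$ of Askey--Wilson polynomials, evaluating via the projection formula (Lemma \ref{lem:projection formula}) and the orthogonality relation (Lemma \ref{lem:orthogonality}), observing that the resulting expression is invariant under the swap $(A,B)\leftrightarrow(C,D)$, $s\mapsto 1/t$, $t\mapsto 1/s$, $j\leftrightarrow k$, and then upgrading equality of all mixed moments to equality of the compactly supported signed measures via Stone--Weierstrass and the Riesz representation theorem. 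The induction to $m\ge 3$ then works, but only by using the $m=2$ identity at the pivot position to move the one-dimensional marginal from one end of the chain to the other; it is not a termwise telescoping. Your proposal is missing this central two-time argument entirely, so it has a genuine gap that cannot be repaired by bookkeeping alone.
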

\begin{remark}
    The above result will reappear  as Proposition \ref{prop:time reversal} and will be proved in Appendix \ref{sec:time reversal}.
\end{remark}

We now prove the main  Theorem \ref{thm:HD} in the high density phase.
\begin{proof}[Proof of Theorem \ref{thm:HD}]
    Although this result is about the high density phase, we will use Theorem \ref{thm:modulo particle hole} in the low density phase and later use particle-hole duality to conclude the proof.

    In the low density phase $C>1$, $C>A$, we  assume $A/C\notin\{q^l:l\in\ZZ_+\}$ if $A\geq1$ and analyze the measure $\eta_m$ introduced in Definition \ref{def:measure etam}.  Notice $y_0(t)=\frac{1}{2}\lb\frac{C}{\sqrt{t}}+\frac{\sqrt{t}}{C} \rb$ and  hence
    $$P_{1,t_1}^{(A,B)}\lb y_0(1),\d x_1\rb=\nu\lb\d x_1; A\sqrt{t_1}, B\sqrt{t_1}, C/\sqrt{t_1}, 1/\lb C\sqrt{t_1} \rb\rb
    =\pi_{t_1}^{(A,B,C,1/C)}(\d x_1).$$
Therefore the signed measure on the RHS of \eqref{eq:def of eta} equals:
\begin{multline*}
     P_{1,t_1}^{(A,B)}\lb y_0(1),\d x_1\rb\dots P^{(A,B)}_{t_{m-1},t_m}\lb x_{m-1},\d x_m\rb\\
     =\pi_{t_1}^{(A,B,C,1/C)}(\d x_1)P^{(A,B)}_{t_{1},t_2}\lb x_{1},\d x_2\rb\dots P^{(A,B)}_{t_{m-1},t_m}\lb x_{m-1},\d x_m\rb \\
      =\pi^{(A,B,C,1/C)}_{t_1,\dots,t_m}\lb\d x_1,\dots,\d x_m\rb,
\end{multline*}
Since $2+2y_0(1)= (1+C)^2/C$, by \eqref{eq:def of eta} we conclude that, for any $1\leq t_1\leq\dots\leq t_m<1+\ep$,
\be  \label{eq:eta in low density phase}
    \EE_{\eta_m}\le\prod_{i=1}^mt_i^{\tau_i}\re=\frac{C^m}{(1+C)^{2m}}\int_{\RR^m}\prod_{i=1}^m\lb1+t_i+2\sqrt{t_i}x_i\rb
        \pi^{(A,B,C,1/C)}_{t_1,\dots,t_m}\lb\d x_1,\dots,\d x_m\rb.
    \ee  

We define the measure $\la_m$ (whose existence is claimed in Theorem \ref{thm:HD}) as the particle-hole duality of $\eta_m$, i.e. in the high density phase $A>1$, $A>C$, assuming $C/A\notin\{q^l:l\in\ZZ_+\}$ if $C\geq1$, we define:
$$\la_m^{(A,B,C,D)}(\tau_1,\dots,\tau_m)=\eta_m^{(C,D,A,B)}(1-\tau_m,\dots,1-\tau_1)\quad\text{for all}\quad\tau_1,\dots,\tau_m\in\{0,1\}.$$
Therefore, for any $\frac{1}{1+\ep}<t_1\leq\dots\leq t_m\leq 1$, we have:
\begin{equation*}
    \begin{split}
        &\EE_{\la_m^{(A,B,C,D)}}\le\prod_{i=1}^mt_i^{\tau_i}\re =\EE_{\eta_m^{(C,D,A,B)}}\le\prod_{i=1}^mt_{m+1-i}^{1-\tau_i}\re \\
        & =t_1\dots t_m\EE_{\eta_m^{(C,D,A,B)}}\le\prod_{i=1}^m\lb\frac{1}{t_{m+1-i}}\rb^{\tau_i}\re\\
        &=t_1\dots t_m\frac{A^m}{(1+A)^{2m}}\int_{\RR^m}\prod_{i=1}^m\lb1+\frac{1}{t_{m+1-i}}+2\sqrt{\frac{1}{t_{m+1-i}}}x_i\rb\pi^{(C,D,A,1/A)}_{\frac{1}{t_m},\dots,\frac{1}{t_1}}\lb\d x_1,\dots,\d x_m\rb\\
        &=\frac{A^m}{(1+A)^{2m}}\int_{\RR^m}\prod_{i=1}^m\lb1+t_{m+1-i}+2\sqrt{t_{m+1-i}}x_i\rb\pi^{(A,1/A,C,D)}_{t_1,\dots,t_m}\lb\d x_m,\dots,\d x_1\rb\\
        &=\frac{A^m}{(1+A)^{2m}}\int_{\RR^m}\prod_{i=1}^m\lb1+t_i+2\sqrt{t_i}x_i\rb\pi^{(A,1/A,C,D)}_{t_1,\dots,t_m}\lb\d x_1,\dots,\d x_m\rb,
        \end{split}
\end{equation*} 
where we have used \eqref{eq:eta in low density phase} for $1\leq1/t_m\leq\dots\leq1/t_1<1+\ep$ and the time reversal symmetry of Askey--Wilson signed measures (Proposition \ref{prop:time reversal1}). The above identity coincides with \eqref{eq:lambda}  in Definition \ref{def:HD limit measure}. By particle-hole duality, the total variation distance bound \eqref{eq:bound HD} in Theorem \ref{thm:HD} directly follows from the bound \eqref{eq:TV bound final} in Theorem \ref{thm:modulo particle hole}. The last statement in the theorem (about the sequence $\{m_n\}$) follows directly from this bound combining with the technical Lemma \ref{lem:tech}. We conclude the proof. 
\end{proof}

\subsection{Proof of Proposition \ref{prop:product Bernoulli}}\label{sec:proof of product Bernoulli}
In this subsection we prove Proposition \ref{prop:product Bernoulli}, which states that the limiting measure $\lambda$ on $\{0,1\}^{\mathbb{Z}_+}$ in the high density phase, as introduced in Definition \ref{def:HD limit measure}, is a product Bernoulli measure if and only if $AC = 1$, in which case it has density $A/(1 + A)$.
\begin{proof}[Proof of Proposition \ref{prop:product Bernoulli}]
Assume that the measure $\lambda$ on $\{0,1\}^{\mathbb{Z}_+}$ is product Bernoulli with density $\rho\in[0,1]$, then each $\la_m$ on $\{0,1\}^m$ for $m\in\ZZ_+$ is product Bernoulli with density $\rho$. Take $t_1=\dots=t_m=t\in(1-\ep,1)$ in \eqref{eq:lambda} we have
\be \label{eq:ergareger}
\begin{split}
\EE_{\la_m}\le t^{\tau_1+\dots+\tau_m}\re&=(t\rho+1-\rho)^m\\
    &=\frac{A^m}{(1+A)^{2m}}\int_{\RR^m}\prod_{i=1}^m\lb1+t+2\sqrt{t}x_i\rb
        \pi^{(A,1/A,C,D)}_{t,\dots,t}\lb\d x_1,\dots,\d x_m\rb\\
    &=\frac{A^m}{(1+A)^{2m}}\int_{\RR}\lb1+t+2\sqrt{t}x\rb^m\pi_t^{(A,1/A,C,D)}(\d x).
\end{split}
\ee  
Recall that
$ \pi^{(A,1/A,C,D)}_t(\d x)=\nu\lb \d x;A\sqrt{t}, \sqrt{t}/A,C/\sqrt{t},D/\sqrt{t}\rb$. Since $A>C\geq0$, $A>1$  and $D\in(-1,0]$, one can shrink the value of $\ep>0$ if necessary, so that for all $t\in(1-\ep,1)$, we have  $A\sqrt{t}>1$, $A\sqrt{t}>C/\sqrt{t}\geq0$, $\sqrt{t}/A\in[0,1)$ and $D/\sqrt{t}\in(-1,0]$. Therefore the signed measure $\pi_t(\d x)$ only has atoms $>1$, and the largest atom is
$y_0(t)=\frac{1}{2}\lb A\sqrt{t}+\frac{1}{A\sqrt{t}}\rb$. Using formula \eqref{eq: p_0} one can check that $\pi_t(\{y_0(t)\})\neq0$. We denote by $y_1^*(t)$ the second largest atom of $\pi_t(\d x)$ with nonzero mass, if it exists; otherwise, set $y_1^*(t) = 1$. 
Then $y_1^*(t)<y_0(t)$, and that $\pi_t(\d x)$ is supported on $U_t\subset \{ y_0(t)\}\cup[-1,y_1^*(t)]$. Therefore we have
\begin{multline}\label{eq:verererg}
\int_{\RR}\lb1+t+2\sqrt{t}x\rb^m\pi_t(\d x)=\\\int_{\{y_0(t)\}}\lb1+t+2\sqrt{t}x\rb^m\pi_t(\d x)+\int_{-1}^{y_1^*(t)}\lb1+t+2\sqrt{t}x\rb^m\pi_t(\d x).
\end{multline}
The first term on the RHS equals $(1+t+2\sqrt{t}y_0(t))^m \pi_t(\{y_0(t)\})$. The absolute value of the second term on the RHS is bounded by $(1+t+2\sqrt{t}y_1^*(t))^m\lnor\pi_t\rnor$, which has a lower order as $m\rightarrow\infty$ since $1<1+t+2\sqrt{t}y_1^*(t)<1+t+2\sqrt{t}y_0(t)$. Therefore by \eqref{eq:ergareger},  
\be\label{eq:vegerv} 
\EE_{\la_m}\le t^{\tau_1+\dots+\tau_m}\re=(t\rho+1-\rho)^m\sim \frac{A^m}{(1+A)^{2m}}\lb 1+t+2\sqrt{t}y_0(t)\rb^m\pi_t(\{y_0(t)\}),
\ee
where we write $f(m)\sim g(m)$ if $f(m)/g(m)\rightarrow1$ as $m\rightarrow\infty$. The above implies
\[
t\rho+1-\rho=\frac{A}{(1+A)^{2}}\lb1+t+2\sqrt{t}y_0(t)\rb=\frac{1+At}{1+A}
\]
hence $\rho=A/(1+A)$, and also $\pi_t(\{y_0(t)\})=1$. Therefore, we conclude that \eqref{eq:vegerv} is an equality, and hence the second term on the RHS of \eqref{eq:verererg} is equal to zero. If $y_1^*(t)>1$ is an atom, then by a similar analysis as above, this second term
\[
0=\int_{-1}^{y_1^*(t)}\lb1+t+2\sqrt{t}x\rb^m\pi_t(\d x)\sim \lb 1+t+2\sqrt{t}y_1^*(t)\rb^m\pi_t(\{y_1^*(t)\}),
\]
which is a contradiction since $\pi_t(\{y_1^*(t)\})\neq0$ by our assumption. Therefore we conclude that $\pi_t(\d x)$ only has a single atom at $y_0(t)$ with mass $1$. Observe that the continuous part density \eqref{eq:continuous part density} of an Askey--Wilson signed measure has fixed sign over $x\in[-1,1]$, we conclude that the continuous part of $\pi_t(\d x)$ is constantly zero. Using formula \eqref{eq:continuous part density} we have
\[
\lb q,t,AC,AD,C/A,D/A,CD/t\rb_{\infty}=0.
\]
Since $t\in(1-\ep,1)$, $A>C\geq0$ and $D\in(-1,0]$, we have $ACq^k=1$ for some $k\in\NN$.  

We first assume $k\in\ZZ_+$. We choose $t\in(1-\ep,1)$ satisfying $A\sqrt{t}q\neq1$. Then by $ACq^k=1$, we have either $A\sqrt{t}q>1$ or $C/\sqrt{t}>1$. When $A\sqrt{t}q>1$, by formula \eqref{eq: p_j}, the second largest atom generated by $A\sqrt{t}$, i.e., $\frac{1}{2}\lb A\sqrt{t}q+\frac{1}{A\sqrt{t}q}\rb$, has nonzero mass. When $C/\sqrt{t}>1$, by \eqref{eq: p_0}, the largest atom generated by $C/\sqrt{t}$, i.e., $\frac{1}{2}\lb \frac{C}{\sqrt{t}}+\frac{\sqrt{t}}{C}\rb$, has nonzero mass. This contradicts to the fact that  $\pi_t(\d x)$ has only one atom $y_0(t)$ with nonzero mass. 

We conclude that $k=0$, i.e., $AC=1$. When $AC=1$, by a computation, the multi-dimensional Askey--Wilson signed measure  $\pi^{(A,1/A,C,D)}_{t_1,\dots,t_m}\lb\d x_1,\dots,\d x_m\rb$ on the RHS of \eqref{eq:lambda} is a point mass at $x_i=y_0(t_i)$ for $i=1,\dots,m$. Using \eqref{eq:lambda} we conclude that $\la_m$ is the product Bernoulli measure on $\{0,1\}^m$ with density $A/(1+A)$. Therefore $\la$ is the product Bernoulli measure on $\{0,1\}^{\ZZ_+}$ with the same density. We conclude the proof.
\end{proof}

  \begin{appendix}
 
\section{Bounding the total variation distance by generating functions}
In this appendix, we provide a general bound of the total variation distance between two  probability measures on $\{0,1\}^m$ by the value of the difference of their joint generating functions at certain points. While it is possible that this inequality is already known, we have been unable to locate it in the previous literature.
\label{sec:bounding TV distance by generating functions}
\begin{proposition}\label{prop:bounding distance two prob measures}
    Let $\k$ and $\k'$ be probability measures on $\{0,1\}^m$. Then for any set of numbers $0<t_{i,0}<t_{i,1}$ for $i=1,\dots,m$, we have:
    \be\label{eq:bound TV distance} 
    \dtv(\k,\k')
    \leq \frac{1}{2}\prod_{i=1}^m\frac{1+t_{i,1}}{t_{i,1}-t_{i,0}}\sum_{\up_1,\dots,\up_m\in\{0,1\}}
    \lv\EE_{\k}\le\prod_{i=1}^mt_{i,\up_i}^{\tau_i}\re - \EE_{\k'}\le\prod_{i=1}^mt_{i,\up_i}^{\tau_i}\re\rv,\ee
    where $\tau_i\in\{0,1\}$ is the occupation variable on the site $i$, for $i=1,\dots,m$.
\end{proposition}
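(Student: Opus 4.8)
The plan is to recover the discrepancy coefficients $\delta(x):=\k(x)-\k'(x)$, $x\in\{0,1\}^m$, from the joint generating functions by multilinear interpolation, and then to bound $\sum_x|\delta(x)|$ by a weighted sum of the $2^m$ evaluations appearing on the right-hand side of \eqref{eq:bound TV distance}. First I would record that $\dtv(\k,\k')=\tfrac12\sum_{x\in\{0,1\}^m}|\delta(x)|$ and that
\[
P(t_1,\dots,t_m):=\EE_{\k}\Bigl[\prod_{i=1}^m t_i^{\tau_i}\Bigr]-\EE_{\k'}\Bigl[\prod_{i=1}^m t_i^{\tau_i}\Bigr]=\sum_{x\in\{0,1\}^m}\delta(x)\prod_{i=1}^m t_i^{x_i}
\]
is a polynomial of degree at most one in each variable, so each $\delta(x)$ is one of its coefficients.

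Next, working one coordinate at a time, I would introduce the two Lagrange-type extraction functionals at the nodes $t_{i,0}<t_{i,1}$: for a function $f$ of the single variable $t_i$ set $L_{i,1}[f]:=\frac{f(t_{i,1})-f(t_{i,0})}{t_{i,1}-t_{i,0}}$ and $L_{i,0}[f]:=\frac{t_{i,1}f(t_{i,0})-t_{i,0}f(t_{i,1})}{t_{i,1}-t_{i,0}}$. A one-line check shows that on an affine function $f(t_i)=a+bt_i$ one has $L_{i,1}[f]=b$ and $L_{i,0}[f]=a$. Since $P$ is multilinear, applying the single-variable operators $L_{m,x_m},\dots,L_{1,x_1}$ successively to $P$ (each acting on its own variable while the others are treated as parameters) isolates exactly $\delta(x)$.

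Then I would expand each $L_{i,x_i}$ as a two-term combination of point evaluations, obtaining
\[
\delta(x)=\sum_{\up_1,\dots,\up_m\in\{0,1\}}\Bigl(\prod_{i=1}^m w_{i,x_i,\up_i}\Bigr)\,P(t_{1,\up_1},\dots,t_{m,\up_m}),
\]
with the explicit weights $w_{i,0,0}=\tfrac{t_{i,1}}{t_{i,1}-t_{i,0}}$, $w_{i,0,1}=\tfrac{-t_{i,0}}{t_{i,1}-t_{i,0}}$, $w_{i,1,0}=\tfrac{-1}{t_{i,1}-t_{i,0}}$, $w_{i,1,1}=\tfrac{1}{t_{i,1}-t_{i,0}}$. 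Taking absolute values, summing over $x\in\{0,1\}^m$, and interchanging the order of summation gives
\[
\sum_{x}|\delta(x)|\leq\sum_{\up_1,\dots,\up_m\in\{0,1\}}|P(t_{1,\up_1},\dots,t_{m,\up_m})|\prod_{i=1}^m\Bigl(\sum_{j\in\{0,1\}}|w_{i,j,\up_i}|\Bigr).
\]
Since $t_{i,0},t_{i,1}>0$, the inner sum equals $\tfrac{1+t_{i,1}}{t_{i,1}-t_{i,0}}$ if $\up_i=0$ and $\tfrac{1+t_{i,0}}{t_{i,1}-t_{i,0}}$ if $\up_i=1$, and both are at most $\tfrac{1+t_{i,1}}{t_{i,1}-t_{i,0}}$ because $t_{i,0}<t_{i,1}$. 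Substituting this bound, which no longer depends on $(\up_1,\dots,\up_m)$, and using $\dtv=\tfrac12\sum_x|\delta(x)|$ yields \eqref{eq:bound TV distance}.

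The only genuine point is the choice to place $t_{i,1}$, rather than $t_{i,0}$, in the numerator of the per-coordinate bound, which makes that bound uniform in $(\up_1,\dots,\up_m)$; everything else is routine bookkeeping of the product structure over the $m$ coordinates, and I do not anticipate a real obstacle. If one prefers to avoid the tensor-operator notation, the same argument can be organized as an induction on $m$, the base case $m=1$ being the direct identity $\dtv=|\delta(1)|$ combined with the elementary inequality $(1+t_{1,1})|P(t_{1,0})|+(1+t_{1,0})|P(t_{1,1})|\leq(1+t_{1,1})\bigl(|P(t_{1,0})|+|P(t_{1,1})|\bigr)$.
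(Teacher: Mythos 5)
Your proposal is correct and is essentially the paper's argument: the paper proves the same coefficient-extraction inequality for multilinear polynomials, using exactly your two-point formulas $g_1=\frac{G(t_{1,1})-G(t_{1,0})}{t_{1,1}-t_{1,0}}$ and $g_0=\frac{t_{1,1}G(t_{1,0})-t_{1,0}G(t_{1,1})}{t_{1,1}-t_{1,0}}$ and the same per-coordinate bound $\frac{1+t_{i,1}}{t_{i,1}-t_{i,0}}$, organized as the induction on $m$ that you mention as an alternative at the end. Your explicit tensor-product weights and the resummation are just a direct, non-inductive packaging of the identical computation, so there is nothing to fix.
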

\begin{proof}
    This result can be seen as a simple corollary of the following inequality:
    For any degree-$1$ multivariate polynomial $G$ with real coefficients:
$$G(x_1,\dots,x_m)=\sum_{\up_1,\dots,\up_m\in\{0,1\}}g_{\up_1,\dots,\up_m}x_1^{\up_1}\dots x_m^{\up_m},$$
and for any set of numbers $0<t_{i,0}<t_{i,1}$, $i=1,\dots,m$, we have:
\be\label{eq:polynomial inequality in prop}\sum_{\up_1,\dots,\up_m\in\{0,1\}}|g_{\up_1,\dots,\up_m}|\leq\prod_{i=1}^m\frac{1+t_{i,1}}{t_{i,1}-t_{i,0}} \sum_{\up_1,\dots,\up_m\in\{0,1\}}|G(t_{1,\up_1},\dots,t_{m,\up_m})|.\ee
Specifically, \eqref{eq:bound TV distance} can be seen by taking $$g_{\tau_1,\dots,\tau_m}=\k(\tau_1,\dots,\tau_m)-\k'(\tau_1,\dots,\tau_m)$$ for any $\tau_1,\dots,\tau_m\in\{0,1\}$.

We now prove \eqref{eq:polynomial inequality in prop} by induction on $m$. 
When $m=1$, we have $G(x_1)=g_0+g_1x_1$, where
$$ 
g_0=\frac{-t_{1,0}G(t_{1,1})+t_{1,1}G(t_{1,0})}{t_{1,1}-t_{1,0}},\quad
g_1=\frac{G(t_{1,1})-G(t_{1,0})}{t_{1,1}-t_{1,0}}.$$
Therefore 
$$|g_0|+|g_1|\leq \frac{1+t_{1,1}}{t_{1,1}-t_{1,0}}\lb|G(t_{1,0})|+|G(t_{1,1})|\rb.$$
Suppose $m\geq 2$, and \eqref{eq:polynomial inequality in prop} holds for the $m-1$ case. 
 We write
   $$G(x_1,\dots,x_m)=G_0(x_1,\dots,x_{m-1})+G_1(x_1,\dots,x_{m-1})x_m,$$
   where for $j\in\{0,1\}$,
    $$G_j(x_1,\dots,x_{m-1})=\sum_{\up_1,\dots,\up_m\in\{0,1\}}g_{\up_1,\dots,\up_{m-1},j}x_1^{\up_1}\dots x_{m-1}^{\up_{m-1}}.$$ 
By the inequality for $m=1$, for any $0<t_{m,0}<t_{m,1}$ and real values of $x_1,\dots,x_{m-1}$ we have:
\begin{multline*}
    |G_0(x_1,\dots,x_{m-1})|+|G_1(x_1,\dots,x_{m-1})|\\
    \leq \frac{1+t_{m,1}}{t_{m,1}-t_{m,0}}
\lb|G(x_1,\dots,x_{m-1},t_{m,0})|+|G(x_1,\dots,x_{m-1},t_{m,1})|\rb.
\end{multline*} 
We sum the above inequality over all the $2^{m-1}$ possibilities $x_i\in\{t_{i,0},t_{i,1}\}$, $1\leq i\leq m-1$ and get: 
\begin{multline}
    \label{eq:in proof of TV bound}\sum_{j\in\{0,1\}}\sum_{\up_1,\dots,\up_{m-1}\in\{0,1\}}|G_j(t_{1,\up_1},\dots t_{m-1,\up_{m-1}})|\\\leq \frac{1+t_{m,1}}{t_{m,1}-t_{m,0}}\sum_{\up_1,\dots,\up_{m}\in\{0,1\}}|G(t_{1,\up_1},\dots t_{m,\up_{m}})|.
\end{multline} 
By the induction hypothesis, for each $j\in\{0,1\}$,
    $$\sum_{\up_1,\dots,\up_{m-1}\in\{0,1\}}|g_{\up_1,\dots,\up_{m-1},j}|
    \leq\prod_{i=1}^{m-1}\frac{1+t_{i,1}}{t_{i,1}-t_{i,0}}\sum_{\up_1,\dots,\up_{m-1}\in\{0,1\}}|G_j(t_{1,\up_1},\dots t_{m-1,\up_{m-1}})|.$$
We sum the above inequality over $j\in\{0,1\}$ and get: 
\begin{equation*}
\begin{split}
    \sum_{\up_1,\dots,\up_m\in\{0,1\}}|g_{\up_1,\dots,\up_m}|&\leq \prod_{i=1}^{m-1}\frac{1+t_{i,1}}{t_{i,1}-t_{i,0}}\sum_{j\in\{0,1\}}\sum_{\up_1,\dots,\up_{m-1}\in\{0,1\}}|G_j(t_{1,\up_1},\dots t_{m-1,\up_{m-1}})|\\
    &\leq\prod_{i=1}^m\frac{1+t_{i,1}}{t_{i,1}-t_{i,0}}\sum_{\up_1,\dots,\up_m\in\{0,1\}}|G(t_{1,\up_1},\dots t_{m,\up_m})|,
\end{split}
\end{equation*} 
where  the last step uses \eqref{eq:in proof of TV bound}.
Hence inequality \eqref{eq:polynomial inequality in prop} is proved for $m$ case. We conclude the proof. 
\end{proof}
 
\section{Total variation bounds of Askey--Wilson signed measures}
\label{sec:total variation bounds}
In this appendix, we establish two results which bound the total variations of certain Askey--Wilson signed measures. 
\begin{proposition}\label{prop:TV bound of AW signed measures}
Assume $A,C\geq0$, $-1<B,D\leq 0$ and $q\in[0,1)$.
    Assume also  that $q^lABCD\neq1$ for all $l\in\NN$ and that $A/C\notin\{q^l:l\in\ZZ\}$ if $A,C\geq 1$. Then there exist positive constants $K\geq1$ and $\ep$ depending on $A,B,C$ and $q$ but not on $D$, such that for any $s<t$ in $I=[1,1+\ep)$ and $x\in U_s$, the total variation of the Askey--Wilson signed measure $P_{s,t}(x,\d y)$ is bounded from above by $\frac{K}{(t-s)^2}$. 
\end{proposition}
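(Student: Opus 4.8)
The plan is to reduce the bound to a control on the atom masses of the Askey--Wilson signed measure $P_{s,t}(x,\d y)=\nu(\d y;a,b,c,d)$ with $a=A\sqrt t$, $b=B\sqrt t$, $c=\sqrt{s/t}\,(x+\sqrt{x^2-1})$, $d=\sqrt{s/t}\,(x-\sqrt{x^2-1})$; observe that none of $a,b,c,d$ involves $D$, so the asserted $D$-independence of $K,\ep$ is automatic, and one may take $\ep$ so that $(a,b,c,d)\in\Omega$ exactly as in the proof of Theorem~\ref{thm:characterization}, shrinking it further as needed below. First, since the continuous density \eqref{eq:continuous part density} has a sign that does not depend on $x\in(-1,1)$, its contribution to $\lnor P_{s,t}(x,\d y)\rnor$ equals $\bigl|\int_{-1}^1 f\,\d x\bigr|=\bigl|1-\sum_{\mathrm{atoms}}p\bigr|$, so $\lnor P_{s,t}(x,\d y)\rnor\leq 1+2\sum_{\mathrm{atoms}}|p(\mathrm{atom})|$. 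Because $|b|=|B|\sqrt t<1$ and $|d|<1$ in every case ($|d|=\sqrt{s/t}$ when $x\in[-1,1]$, and $|d|=\sqrt{s/t}/u<1$ when $x$ is an atom $\tfrac12(u+u^{-1})$ of $U_s$ with $u>1$), only $a$ (when $A\sqrt t\geq1$) and $c$ (when $c\geq1$, which can occur only when $x$ is an atom) can generate atoms. Hence it suffices to bound $\sum_{k\geq0}|p_k^{\aa}|$ and $\sum_{k\geq0}|p_k^{\cc}|$, and then absorb the ``$1$'' using $t-s<\ep$.

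For the leading masses $p_0^{\aa}$ and $p_0^{\cc}$ --- given by \eqref{eq: p_0} and its $\aa\leftrightarrow\cc$ analogue --- the numerators are finite products of $q$-Pochhammer symbols evaluated at arguments that stay in a fixed compact set as $s<t$ vary in $I$ and $x$ in $U_s$, hence are bounded by a constant depending only on $A,B,C,q$. Among the denominator factors, $abcd=ABs\leq0$, $b/a=B/A\leq0$ and $b/c\leq0$ all have modulus $\geq1$, so only $(c/a)_\infty,(d/a)_\infty$ (for $p_0^{\aa}$) and $(a/c)_\infty,(d/c)_\infty$ (for $p_0^{\cc}$) can degenerate. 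The key point is that in each of the cases for $x\in U_s$ at most two of these approach $0$, and each such factor has modulus $\geq\const\cdot(t-s)$: its leading term $|1-z|$ is $\geq\const\cdot(t-s)$, while each later term $|1-zq^\ell|$, $\ell\geq1$, is bounded below by a positive constant once $\ep$ is small. This rests on a few elementary inequalities valid for $1\leq s<t<1+\ep$: $t^2-s\geq t-s$ (so that $1-|c/a|^2=(t^2-s)/t^2\geq(t-s)/(1+\ep)^2$ in the only delicate subcase, $A=1$ with $x\in[-1,1]$); $t/s-1\geq(t-s)/(1+\ep)$ and $t-1\geq t-s$ (for the shifts of $a/c$ and of $d/a$ when $x$ is an atom); and the fact that $c\geq1$ forces the generating value $u$ of $x$ to satisfy $u^2\geq t/s$, whence $d/c=u^{-2}\leq s/t$ and $1-d/c\geq(t-s)/(1+\ep)$. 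With at most two degenerate factors this yields $|p_0^{\aa}|,|p_0^{\cc}|\leq K_0/(t-s)^2$.

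For the higher masses I would use \eqref{eq: p_j}: writing $p_k^{\aa}=p_0^{\aa}R_k$, the potentially singular $1/(1-a^2)$ cancels the leading term of $(a^2;q)_k$, leaving the bounded $(a^2q;q)_{k-1}$, and the remaining part of $R_k$ should decay geometrically in $k$, so that $\sum_{k\geq1}|R_k|$ is bounded by a constant uniform in $s,t,x$. What needs checking is that no denominator factor $b-q^\ell a$, $c-q^\ell a$, $d-q^\ell a$ (for $\ell\geq1$) is small: $b-q^\ell a=\sqrt t\,(B-q^\ell)$ with $B\leq0<q^\ell$, and the $c$- and $d$-differences reduce (when $x$ is an atom) to comparing a power $q^{\ell+j}\leq q$ with $t/s\geq1$, hence stay away from $0$, possibly after shrinking $\ep$ to avoid finitely many exceptional values of $A$. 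The identical argument handles $p_k^{\cc}$. Summing, $\sum_{\mathrm{atoms}}|p|\leq K_0K_1/(t-s)^2$; since $1\leq\ep^2/(t-s)^2$, this gives $\lnor P_{s,t}(x,\d y)\rnor\leq 1+2K_0K_1/(t-s)^2\leq K/(t-s)^2$ for a suitable $K\geq1$.

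The step I expect to be the main obstacle is precisely this bookkeeping --- both the case analysis underlying the $p_0^{\aa},p_0^{\cc}$ bound and the verification just described for the products in \eqref{eq: p_j} --- which requires going through the three cases for $x\in U_s$ in the proof of Theorem~\ref{thm:characterization} and, in each, tracking exactly which of $a,b,c,d$ exceed $1$ in modulus and how every $q$-shift of their pairwise ratios sits relative to $1$, so that only the two explicitly-estimated Pochhammer factors (and nothing hidden, e.g.\ inside the products of \eqref{eq: p_j}) carry the $(t-s)$-degeneracy. A handful of boundary configurations of the fixed parameters are handled either by shrinking $\ep$ or, when that is impossible, by a continuity argument in the spirit of Step~3 in the proof of Theorem~\ref{thm:modulo particle hole}. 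A useful consistency check along the way is that for $x=y_0(s)$ in the high density phase Lemma~\ref{lem:cite} forces $\lnor P_{s,t}(x,\d y)\rnor=1$, so the apparent $(t-s)^{-2}$ growth must cancel there --- a check on the sign bookkeeping, not a gap.
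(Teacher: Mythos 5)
Your overall architecture coincides with the paper's (Appendix B): reduce the total variation to a sum of atom masses using the fixed sign of the continuous density, observe that only $a=A\sqrt t$ and $c$ can generate atoms, bound the Pochhammer numerators uniformly, and show that among the denominator factors at most two can degenerate, each bounded below by $\const\times(t-s)$. The $D$-independence observation and the treatment of $p_0^{\aa}$, $p_0^{\cc}$ are also essentially the paper's Claims B.3--B.4. However, your treatment of the higher masses contains a genuine error. You claim that no factor $c-q^{\ell}a$ or $d-q^{\ell}a$ with $\ell\geq1$ is small, so that in the factorization $p_k^{\aa}=p_0^{\aa}R_k$ the correction $R_k$ is uniformly bounded. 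This fails in Case 2 of the case analysis: if $x=\tfrac12\lb q^jA\sqrt s+(q^jA\sqrt s)^{-1}\rb$ with $j\geq1$ and $q^jA\sqrt s>1$, then $c=q^jAs/\sqrt t$, so $c/a=q^js/t$ and
\[
c-q^ja \;=\; A\sqrt t\,\lb q^j\tfrac{s}{t}-q^j\rb \;=\; -A\sqrt t\,q^j\,\tfrac{t-s}{t},
\]
which is of order $t-s$. Since $q^jA\sqrt t>q^jA\sqrt s>1$, the atom $y_j^{\aa}$ genuinely exists, and this degenerate factor sits in the product $\prod_{l=1}^{k}(c-q^la)$ for every $k\geq j$. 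So $\sum_k|R_k|$ is \emph{not} uniformly bounded in $s,t,x$, and your stated verification for $k\geq1$ is false as written. Your heuristic "comparing $q^{\ell+j}$ with $t/s$" applies to $a/c-q^\ell$ (relevant for atoms generated by $c$), not to $c/a-q^\ell$ (relevant for atoms generated by $a$), which is where the problem arises.

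The final bound nevertheless survives, because in exactly this configuration the factor $(c/a)_{\infty}=\prod_{i\geq0}(1-q^{i+j}s/t)$ appearing in $p_0^{\aa}$ is \emph{not} degenerate (every term is $\geq 1-q$ when $j\geq1$): the $(t-s)$-degeneracy associated with the ratio $c/a$ migrates from the $(\cdot)_\infty$ part of $p_0^{\aa}$ into the finite product in $R_k$, and it never occurs in both. This is why the paper does not split $p_k^{\eee}$ as $p_0^{\eee}R_k$ but instead bounds $|p_k^{\eee}|$ by $\const\times\prod_{\f}\prod_{l\geq-k}|1-q^l\f/\e|^{-1}$ and proves (Claim B.3) that for each fixed $\fff$ the whole family $\{|1-q^l\f/\e|:l\geq-k\}$ contains at most \emph{one} element not bounded away from zero, that element being $\geq\const\times(t-s)$ and possible only for $\fff\in\{\aa,\cc\}$. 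To repair your argument you would need to replace the separate bounds on $p_0$ and $R_k$ by this joint count of degenerate linear factors over all shifts $l\geq-k$ simultaneously; as it stands, the step you yourself flagged as the main obstacle is where the proof breaks.
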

\begin{remark}
    As   mentioned in Remark \ref{rmk:tv bound}, the above total variation bound is more refined than the one utilized in \cite{wang2023askey}. In particular, the total variation bound of $P_{s,t}(x,\mathrm{d}y)$ provided by \cite[Proposition A.1]{wang2023askey} can be derived as a simple corollary of the above result, but the converse does not hold. As we will see in the proof, more subtle estimates of the atom masses are needed to establish this result.
\end{remark}
Before we offer the proof of Proposition \ref{prop:TV bound of AW signed measures}, we first prove a lemma bounding the total variation of an Askey--Wilson signed measure by the supremum of all the atom masses:
\begin{lemma}\label{lem:bound TV}
    Assume $q\in[0,1)$ and $(a,b,c,d)\in\Omega$, where $\Omega$ is introduced in Definition \ref{def:AW}. Then
    \be\label{eq:l1}\lnor\nu\lb\d y;a,b,c,d\rb\rnor\leq1+2\card\lb F(a,b,c,d)\rb \sup_{y_j^{\eee}\in F(a,b,c,d)}|p_j^{\eee}(a,b,c,d)|,\ee
    where   $\lnor\nu\rnor$   denotes the total variation of a signed measure $\nu$ and $\card(\mathcal{S})$  denotes the cardinality of a set $\mathcal{S}$. We recall that $p_j^{\eee}(a,b,c,d)$ denotes the mass of the atom $y_j^{\eee}$.
\end{lemma}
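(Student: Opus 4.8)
The plan is to prove Lemma \ref{lem:bound TV} by decomposing the signed measure $\nu(\d y;a,b,c,d)$ into its continuous part and its atomic part, and bounding the total variation of each separately. Recall that by the last remark in the excerpt, the Askey--Wilson signed measure has total mass $1$: $\int_{\RR}\nu(\d y;a,b,c,d)=1$. The key observation is that the continuous part density $f(x;a,b,c,d)$ given by \eqref{eq:continuous part density} has a \emph{fixed sign} on $(-1,1)$; this is evident from the formula since $\sqrt{1-x^2}>0$, the $q$-Pochhammer prefactor $(q,ab,ac,ad,bc,bd,cd)_\infty/\bigl((abcd)_\infty\bigr)$ is a constant independent of $x$, and the remaining factor $\bigl\lvert (e^{2\i\theta})_\infty/(ae^{\i\theta},be^{\i\theta},ce^{\i\theta},de^{\i\theta})_\infty\bigr\rvert^2$ is manifestly nonnegative.

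First I would write $\nu = \nu_{\mathrm{cont}} + \nu_{\mathrm{atom}}$, where $\nu_{\mathrm{cont}}(\d x)=f(x;a,b,c,d)\one_{|x|<1}\,\d x$ and $\nu_{\mathrm{atom}}=\sum_{y\in F(a,b,c,d)}p(y)\,\delta_y$. Since $f$ has a fixed sign on $(-1,1)$, the total variation of $\nu_{\mathrm{cont}}$ equals $\bigl\lvert\int_{-1}^1 f(x;a,b,c,d)\,\d x\bigr\rvert=\bigl\lvert\nu_{\mathrm{cont}}((-1,1))\bigr\rvert$. Now I use $1=\nu(\RR)=\nu_{\mathrm{cont}}((-1,1))+\nu_{\mathrm{atom}}(F(a,b,c,d))$, so $\nu_{\mathrm{cont}}((-1,1))=1-\sum_{y\in F}p(y)$, giving $\lnor\nu_{\mathrm{cont}}\rnor=\bigl\lvert 1-\sum_{y\in F}p(y)\bigr\rvert\leq 1+\sum_{y\in F}|p(y)|\leq 1+\card(F)\sup_{y_j^{\eee}\in F}|p_j^{\eee}|$. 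For the atomic part, the total variation is simply $\lnor\nu_{\mathrm{atom}}\rnor=\sum_{y\in F}|p(y)|\leq\card(F)\sup_{y_j^{\eee}\in F}|p_j^{\eee}|$. Adding these two bounds via the triangle inequality $\lnor\nu\rnor\leq\lnor\nu_{\mathrm{cont}}\rnor+\lnor\nu_{\mathrm{atom}}\rnor$ yields exactly \eqref{eq:l1}.

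The only subtlety — and the step I would flag as requiring a small amount of care rather than being entirely routine — is justifying that $f(x;a,b,c,d)$ does not change sign on $(-1,1)$, i.e.\ that the constant prefactor $(q,ab,ac,ad,bc,bd,cd)_\infty/(abcd)_\infty$ is well-defined and nonzero under the assumption $(a,b,c,d)\in\Omega$. The denominator $(abcd)_\infty$ is nonzero by assumption \textbf{(3)} in Definition \ref{def:AW}, and the numerator is a finite product of convergent $q$-Pochhammer symbols (since $|q|<1$), so $f$ is a genuine, finitely-valued density with constant sign. Once this is in hand, the argument above is complete; the main point worth emphasizing is that exploiting the \emph{signed} total mass being exactly $1$, together with the fixed sign of the continuous part, is what keeps the bound as clean as \eqref{eq:l1} rather than forcing a separate estimate on $\int_{-1}^1|f|$.
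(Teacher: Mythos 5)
Your proof is correct and follows essentially the same route as the paper: both exploit that the continuous density has fixed sign on $(-1,1)$ and that the total mass is $1$, so that $\int_{-1}^1|f|=\bigl|1-\sum_{y\in F}p(y)\bigr|\leq 1+\sum_{y\in F}|p(y)|$, and then add the atomic contribution. The extra justification you give for the fixed sign of $f$ is a fact the paper simply recalls from its background section, so nothing is missing.
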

\begin{proof}
We recall from Section \ref{subsec:Backgrounds} that for any Askey--Wilson signed measure, the continuous part density on $(-1,1)$ must be either constantly positive or  constantly negative. Furthermore, the total mass is always equal to $1$. Therefore we have,
    $$\lnor\nu\lb\d y;a,b,c,d\rb\rnor \leq\sum_{y_j^{\eee}\in F(a,b,c,d)}|p_j^{\eee}|+\int_{-1}^1|f(y;a,b,c,d)|\d y
    \leq 1+2\sum_{y_j^{\eee}\in F(a,b,c,d)}|p_j^{\eee}|.$$
The RHS above can be further bounded by the RHS of \eqref{eq:l1}.
\end{proof}
 
\begin{proof}[Proof of Proposition \ref{prop:TV bound of AW signed measures}]
We assume that $A,C>0$ in the proof. Otherwise we are in the fan region $AC<1$, in which the Askey--Wilson signed measures are actually probability measures, whose total variation equal $1$.
We   first prove that there exists $K\geq1$ and $\ep>0$ depending on $A,B,C,D$ and $q$, such that for any $s<t$ in $I=[1,1+\ep)$ and $x\in U_s$,
\be\label{eq:only need appendix B} \lnor P_{s,t}(x,\d y)\rnor \leq\frac{K}{(t-s)^2}.\ee
At the end of the proof we will show that the above constants $K$ and $\ep$ can be chosen as independent of $D$.

    We start by choosing $\ep>0$ according to (the proof of) Theorem \ref{thm:characterization}. 
    For any $s<t$ in $I=[1,1+\ep)$ and $x\in U_s$,
    we will investigate the Askey--Wilson signed measure $P_{s,t}^{(A,B)}(x,\d y)=\nu\lb\d y;a,b,c,d\rb$, where
    \be\label{eq:abcd}a=A\sqrt{t},\quad b=B\sqrt{t},\quad c=\sqrt{\frac{s}{t}}\lb x+\sqrt{x^2-1}\rb,\quad d=\sqrt{\frac{s}{t}}\lb x-\sqrt{x^2-1}\rb.\ee   
    Notice that the norms of $a,b,c,d$  are uniformly bounded by a finite constant. Hence the total number of atoms in $\nu\lb\d y;a,b,c,d\rb$ is also uniformly bounded. Here and below, a uniform constant means a constant that only depends on $A,B,C,D$ and $q$.
    In view of Lemma \ref{lem:bound TV} we only need to bound the supremum of norms of all atom masses by a uniform constant over $(t-s)^2$.
    
    We look at the   atom masses: If $|aq^k|\geq1$ for $k\in\NN$, then:
\be\label{eq:formulas for atom masses in proof}
\begin{split}
    p_0^\aa&=\frac{(a^{-2},bc,bd,cd)_\infty}{(b/a,c/a,d/a,abcd)_\infty},\\
    p_k^\aa&=\frac{(a^{-2},bc,bd,cd)_\infty}{(b/a,c/a,d/a,abcd)_\infty}\frac{q^k(1-a^2q^{2k})(a^2,ab,ac,ad)_k}{(q)_k(1-a^2)a^{4k}\prod_{l=1}^{k}\lb(b/a-q^l)(c/a-q^l)(d/a-q^l)\rb},\quad k\geq 1.
\end{split}
\ee 
The masses for atoms generated by $\eee\in\{\cc,\dd\}$ are given by similar formulas with $a$ and $\e$ swapped. 
Note that $\bb$ does not generate atoms since $b=B\sqrt{t}\in(-1,0]$. One can observe that the numerators of all   atom masses are uniformly bounded from above. In the denominator, the term $(abcd)_{\infty}$ is uniformly bounded away from $0$, since $abcd$ equals $ABt\leq 0$. Other three terms $(1-\e^2)$, $\e^{4k}$ and $(q)_k$ are uniformly bounded away from $0$, since we have $|\e q^k|\geq 1$ and that $k\geq 1$ is bounded from above.  
Therefore, for any atom $y_k^{\eee}$ generated by $\eee\in\{\aa,\cc,\dd\}$,
\be\label{eq:in proof bound atom masses}
|p_k^{\eee}(a,b,c,d)|\leq\const\times\prod_{\f\in\{a,b,c,d\}\setminus\{\e\}}\prod_{l=-k}^{\infty}\frac{1}{  | 1-q^l\f/\e |},\quad k\in\NN, \quad|\e q^k|\geq 1.
\ee
Here and below, we use $\const$ to denote a uniform positive constant. The specific value of this constant may vary in different bounds.

The bound \eqref{eq:only need appendix B} then follows from the following claim:  
\begin{claim}\label{claim:only need}
    For any $\eee \in\{\aa,\cc,\dd\}$ such that $|\e|\geq1$, we have $\e\geq1$.  Consider the following set:
$$\mathcal{A}_{k}^{\eee,\fff}:=\left\{\left|1-q^l\frac{\f}{\e}\right|: l\geq -k\right\},\quad k\in\NN,\quad \e q^k\geq 1,\quad\fff\in\{\aa,\bb,\cc,\dd\}\setminus\{\eee\}.$$
For fixed $\eee$, $\fff$ and $k$, all the elements in the set $\mathcal{A}_{k}^{\eee,\fff}$ are uniformly bounded away from $0$ except for possibly one `exceptional' element. For this exceptional element $|1-q^l\f/\e|$, we have that either $\fff=\aa$ or $\fff=\cc$, and that it can be lower bounded by $(t-s)$ times a uniform positive constant.
\end{claim}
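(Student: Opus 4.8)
The plan is to prove the three assertions of Claim~\ref{claim:only need} by substituting the explicit formulas \eqref{eq:abcd} for $a,b,c,d$ into the ratios $\fff/\eee$ and running through the three cases for $x\in U_s$ already isolated in the proof of Theorem~\ref{thm:characterization}. The reality assertion is immediate from that case analysis: in Case~1 ($x\in[-1,1]$) the parameters $c,d$ form a complex conjugate pair of modulus $\sqrt{s/t}<1$, so only $a=A\sqrt t$ can have modulus $\ge1$, and it is a positive real; in Cases~2 and~3 the parameters $c,d$ are positive reals with $d<1$, so again $|\eee|\ge1$ forces $\eee$ to be a positive real $\ge1$. In particular the parameter $\eee$ generating an atom is always $a$ or $c$.

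For the ``at most one exceptional element'' part I would first record that $k$ is bounded by a uniform constant, since $1\le\eee q^k\le\eee\le M$ with $M$ a uniform bound on $\max(|a|,|b|,|c|,|d|)$, and that for $\fff=b=B\sqrt t\le0$ one has $q^lb/\eee\le0$, hence $|1-q^lb/\eee|\ge1$, so $b$ contributes no exceptional element. For a positive real $\fff$, the sequence $(q^l\fff/\eee)_{l\ge-k}$ is geometric with ratio $q$ and tends to $0$, so at most one $l$ places it in the fixed annulus $(\sqrt q,q^{-1/2})$ around $1$; for the remaining $l$ one has $|1-q^l\fff/\eee|\ge\min(1-\sqrt q,q^{-1/2}-1)$, and the tail over large $l$ is uniformly controlled since $\sum_l q^l|\fff/\eee|\le M\sum_l q^l<\infty$. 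When $\fff\in\{c,d\}$ is complex (Case~1), writing $c/a=\rho(x+\i\sqrt{1-x^2})$ with $\rho=\sqrt{s/t}/a$, the identity $|1-q^lc/a|^2=1-2q^l\rho x+q^{2l}\rho^2\ge(1-q^l\rho)^2$ reduces the matter to the positive real sequence $q^l\rho$; since $a\ge1\ge q^k$ one has $q^l\rho\le q^{-k}\sqrt{s/t}/a\le\sqrt{s/t}<1$ for all $l\ge-k$, so the only near-$1$ element is the one at $l=-k$, and it satisfies $|1-q^{-k}c/a|\ge1-\sqrt{s/t}$ (and likewise for $d=\bar c$).

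It then remains to identify the exceptional element in each case and bound it below by a uniform constant times $t-s$. In Case~1 the only candidate is the $l=-k$ term, and $1-\sqrt{s/t}=(\sqrt t-\sqrt s)/\sqrt t\ge(t-s)/(2(1+\ep))$. In Case~2, using $c/a=q^js/t$: for $\eee=a$ the product $q^lc/a=q^{l+j}(s/t)$ falls in the danger annulus only at $l=-j$ (since $s/t\in(q,1)$, and provided $l=-j$ lies in the range $l\ge-k$), where $|1-q^{-j}c/a|=1-s/t\ge(t-s)/(1+\ep)$; for $\eee=c$ the product $q^la/c=q^{l-j}(t/s)$ can be near $1$ only at $l=j$ (since $t/s\in(1,1/q)$, and $j\ge-k$ as $j,k\ge0$), where $|1-q^ja/c|=(t-s)/s\ge(t-s)/(1+\ep)$. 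In Case~3 the atom condition forces $C\ge1$, and I would use the hypothesis $A/C\notin\{q^l:l\in\ZZ\}$ (relevant when also $A\ge1$) together with a further shrinking of $\ep$ so that $I$ keeps a uniform positive distance from the discrete set $\{Cq^l/A:l\in\ZZ\}$: this pins $q^lc/a$ and $q^la/c$ a uniform distance from $1$ for every $l$, so no exceptional $\fff=a$ or $\fff=c$ term occurs there (the sub-cases $A<1$ or $C<1$ being vacuous or fan-region, since $\eee\ge1$ then forces $a<1$ or $c<1$, which generates no atom). Any residual exceptional term coming from the ``edge'' ratio $d/\eee$ is again $\ge1-\sqrt{s/t}\ge\const\cdot(t-s)$, because $\eee q^k\ge1$ keeps $q^ld/\eee\le\sqrt{s/t}$ for all $l\ge-k$; assembling these bounds yields the claim.

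The main obstacle I anticipate is not any single computation but the bookkeeping of the sub-cases together with the uniformity of all the constants --- in particular, that the separation between $I$ and $\{Cq^l/A:l\in\ZZ\}$ can be made uniform (using $A/C\notin\{q^l\}$ and discreteness of that set for $q\in[0,1)$), and that the edge resonances are exactly of order $t-s$ rather than smaller, which is precisely the point where one must exploit that $\eee q^k\ge1$ forces the modulus of the relevant ratio below $\sqrt{s/t}$. The degenerate case $q=0$ can be treated directly, since then all the $q$-Pochhammer symbols collapse to single factors.
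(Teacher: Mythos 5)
Your proposal follows the same overall strategy as the paper's proof: establish the reality statement from the three cases for $x\in U_s$ already isolated in the proof of Theorem \ref{thm:characterization}, compute the ratios $\f/\e$ explicitly in each case, isolate at most one near-resonant index $l$ per pair $(\eee,\fff)$, and bound that single term from below by quantities such as $1-s/t$, $t/s-1$, $t-1$ or $1-\sqrt{s/t}$, each of order $t-s$. The one genuinely different ingredient is your mechanism for the uniqueness of the exceptional index: you note that a positive geometric sequence with ratio $q$ visits the annulus $(\sqrt q,q^{-1/2})$ at most once, whereas the paper factors each ratio as $u\cdot v\cdot q^h$ with $u\in\{1,1/A,1/A^2,C/A,A/C\}$ and $v\in\{\sqrt s/t,s/t,t/s,1/t,t\}$ and shrinks $\ep$ so that $|1-q^iuv|$ is uniformly positive unless $q^iu=1$ exactly. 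Your version is a clean substitute that avoids discussing whether $1/A$, $1/A^2$, $A/C$ are powers of $q$; your treatment of the $d$-ratios via $q^l\f/\e\leq q^{-k}\f/\e\leq\e\cdot(\f/\e)=\f<\sqrt{s/t}$ (using $\e q^k\geq1$) is the same trick the paper uses for $d/c$.

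Two soft spots. First, in Case 3 your dismissal of the sub-case $A<1$ as ``vacuous or fan-region'' is not a valid justification: although $a=A\sqrt t<1$ then generates no atom, so $\eee\neq\aa$, the parameter $a$ still occurs as $\fff$ in the product \eqref{eq:in proof bound atom masses} for $\eee=\cc$, and the hypothesis $A/C\notin\{q^l:l\in\ZZ\}$ is only assumed when $A,C\geq1$, so $a/c=(A/C)\,t\,q^{-j}$ could a priori resonate. The fix is exactly your own edge argument: $q^l a/c\leq q^{-k}a/c\leq c\cdot(a/c)=a=A\sqrt t<1$ uniformly once $\ep$ is shrunk so that $A\sqrt{1+\ep}<1$, so in fact no exceptional element arises there. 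Second, your argument explicitly permits the exceptional element to come from $\fff=\dd$ (for instance $d/a$ in Case 2 when $A^2$ is a negative power of $q$), so it does not literally establish the clause ``either $\fff=\aa$ or $\fff=\cc$''; the paper's own proof has the same feature, and the discrepancy is harmless for the application because for fixed $\eee$ only two of the three remaining parameters differ from $\bb$, which still yields at most two factors of order $(t-s)^{-1}$ and hence the bound $K/(t-s)^2$ --- but a complete write-up should either prove that clause or observe that the weaker count suffices.
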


Notice that, although the RHS of \eqref{eq:in proof bound atom masses} is an infinite product, we do not need to worry about its tail, because it  is uniformly bounded as the inverse of a $q$-Pochhammer symbol. 
If Claim \ref{claim:only need} holds, then for any fixed $\eee\in\{\aa,\cc,\dd\}$ and $k\in\NN$, 
there are at most two pairs of $\fff\in\{\aa,\cc\}$ and $l\geq-k$ that produce exceptional elements $|1-q^l\f/\e|$.
Hence in view of \eqref{eq:in proof bound atom masses}, the atom mass $|p_k^{\eee}(a,b,c,d)|$ is bounded by a uniform constant times $1/(t-s)^2$. Therefore in view of Lemma \ref{lem:bound TV}, bound \eqref{eq:only need appendix B} holds. 

The proof of Claim \ref{claim:only need} necessitates a detailed analysis of all the possible values of $\f/\e$, which is provided in the following claim:

\begin{claim}\label{claim}
    For any $\e \in\{a,c,d\}$ satisfying $|\e|\geq1$, we have $\e\geq1$.  Consider the value of $\f/\e$ for $\f\in\{a,b,c,d\}\setminus\{\e\}$ with $\f>0$. 
    Either   $|\f/\e|$ can be expressed as an element in  
     $$\left\{\frac{\sqrt{s}}{At},\frac{s}{t},\frac{t}{s},\frac{1}{A^2t},\frac{C}{At},\frac{At}{C}\right\}$$ 
    times $q^r$ for some $r\in\ZZ$, or $\e=c$, $\f=d$ and $\f/\e\leq q^{2k}s/t$, where $k\in\NN$ satisfies $\e q^k\geq 1$.
\end{claim}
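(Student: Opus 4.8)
The plan is a finite case analysis according to the shape of the point $x\in U_s$. Since $A,C>0$ is already in force (otherwise we lie in the fan region and there is nothing delicate to bound), I would fix $\ep>0$ as in the proof of Theorem \ref{thm:characterization}; in particular $1+\ep<\min(1/q,1/B^2)$, one has $1+\ep<1/A^2$ whenever $A<1$, and $I=[1,1+\ep)$ misses $\{Cq^l/A:l\in\ZZ\}$ when $A,C\geq1$. Two preliminary remarks then handle most of the bookkeeping: first, $b=B\sqrt t\in(-1,0]$, so the hypothesis $\f>0$ automatically excludes $\f=b$; second, $(x+\sqrt{x^2-1})(x-\sqrt{x^2-1})=1$ gives the identity $cd=s/t<1$, valid for every $x$.

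Next I would split on the form of $x$. Because $B\sqrt s,D/\sqrt s\in(-1,0]$ by the choice of $\ep$, the support $U_s$ of $\pi_s$ equals $[-1,1]$ together with atoms generated only by $A\sqrt s$ (possible only if $A\geq1$) or by $C/\sqrt s$ (possible only if $C\geq\sqrt s$, hence $C\geq1$). Thus: (i) if $x=\cos\theta\in[-1,1]$, then $c$ and $d$ are complex conjugates of modulus $\sqrt{s/t}<1$, so the only parameter that can have modulus $\geq1$ is $a=A\sqrt t$ (which is then $\geq1$, being positive), and the only positive $\f$ occurs at $x=1$, where $c=d=\sqrt{s/t}$ and $\f/\e=\sqrt s/(At)$ lies on the list; (ii) if $x$ is an $A\sqrt s$-atom of index $j$, then $c=q^jAs/\sqrt t$ and $d=1/(q^jA\sqrt t)$, and since $q^jA\sqrt t\geq q^jA\sqrt s\geq1$ one has $d\leq1$, so the parameters of modulus $\geq1$ lie in $\{a,c\}$, both positive; (iii) if $x$ is a $C/\sqrt s$-atom of index $j$, then $c=q^jC/\sqrt t$ and $d=s/(q^jC\sqrt t)\leq\sqrt{s/t}<1$, and again the relevant parameters lie in $\{a,c\}$. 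In particular $d<1$ throughout, so $\e$ is never $d$; this, with $a,c>0$, gives the first assertion of the claim. Finally, in (ii) and (iii) the index $j$ is bounded by a uniform constant, since the atom-existence condition forces $q^j A\sqrt s\geq1$ (resp.\ $q^j C/\sqrt s\geq1$).

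Then I would enumerate $\f/\e$ for $\e\in\{a,c\}$ and $\f\in\{a,c,d\}\setminus\{\e\}$, using the explicit $c,d$ above. In (ii): $c/a=q^j(s/t)$, $a/c=q^{-j}(t/s)$, $d/a=q^{-j}/(A^2t)$, each a power of $q$ times a listed quantity. In (iii): $c/a=q^jC/(At)$ and $a/c=q^{-j}At/C$ are likewise listed. For the pair $\e=c$, $\f=d$ in either case I would not compute $d/c$ directly but invoke $cd=s/t$ together with the atom condition $c q^k\geq1$, i.e.\ $c^2\geq q^{-2k}$: this gives $d/c=s/(tc^2)\leq q^{2k}s/t$, which is exactly the exceptional alternative. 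The only ratio not immediately of this shape is $d/a$ in case (iii), equal to $q^{-j}s/(ACt)$; it arises only when $A,C\geq1$, so $AC\geq1$, and I would dispose of it by observing that it is a $q$-power multiple of $s/t$ when $AC$ is an integer power of $q$, while otherwise one shrinks $\ep$ once more so that $q^{\ZZ}\,s/(ACt)$ avoids a fixed neighbourhood of $1$ for all $s,t\in I$, which is all that the downstream estimate in Claim \ref{claim:only need} uses.

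I expect the main obstacle to be completeness of the enumeration and its compatibility with the choice of $\ep$: one must check that no positive pair $(\e,\f)$ is overlooked, and that the $\ep$ fixed in Theorem \ref{thm:characterization}, possibly shrunk further, keeps each listed quantity and all of its $q$-translates $q^l\f/\e$ with $l\geq-k$ bounded away from $1$, except at the at most one index where the distance to $1$ degenerates and is then comparable to $t-s$. Once $c$ and $d$ are substituted, the individual computations are routine.
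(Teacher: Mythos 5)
Your proposal is correct and follows essentially the same route as the paper: the same three-way split according to whether $x\in[-1,1]$, $x$ is an atom generated by $A\sqrt{s}$, or an atom generated by $C/\sqrt{s}$, followed by explicit substitution of $c$ and $d$ and enumeration of the ratios. Your unified derivation of the exceptional bound $d/c=(cd)/c^2=(s/t)/c^2\leq q^{2k}s/t$ from $cd=s/t$ and $cq^k\geq1$ is equivalent to the paper's case-by-case computation, just packaged more cleanly. The one substantive point where you diverge is the ratio $d/a=q^{-j}s/(ACt)$ in the third case: you correctly observe that this is \emph{not} literally an element of the displayed six-element set times a power of $q$ unless $AC\in\{q^r:r\in\ZZ\}$, whereas the paper's proof simply lists it and asserts the claim holds; your patch (treat it as $q^r\cdot s/t$ when $AC\in q^{\ZZ}$, and otherwise shrink $\ep$ so that $q^{\ZZ}\cdot s/(ACt)$ stays uniformly away from $1$, which is all that Claim \ref{claim:only need} consumes) is the right repair and is consistent with the downstream argument. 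The only minor gap on your side is that in case (i) you dismiss the complex-conjugate values of $c,d$ on the grounds that $\f>0$ fails, while the downstream mass bound still involves the factors $|1-q^l c/a|$ for complex $c$ with $|c/a|=\sqrt{s}/(At)$; the paper's Case 1 records this modulus explicitly, and you should too, but this is bookkeeping rather than a mathematical obstacle.
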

\begin{proof}[Proof of Claim \ref{claim}]
We recall that we always have $a>0$ and $b\in(-1,0]$.
Similar to the proof of Theorem \ref{thm:characterization}, we consider the following three cases depending on $x\in U_s$:
    \begin{enumerate}
    \item [Case 1.] Let $x\in[-1,1]$ then we have $a=A\sqrt{t}$, $b=B\sqrt{t}$, $c=\sqrt{\frac{s}{t}}\lb x+\sqrt{x^2-1}\rb$ and  $d=\sqrt{\frac{s}{t}}\lb x-\sqrt{x^2-1}\rb$. We have $|c|,|d|<1$ and $|c/a|=|d/a|=\sqrt{s}/(At)$.   Claim \ref{claim} holds.
\item [Case 2.] Let $x=\frac{1}{2}\lb q^jA\sqrt{s}+\lb q^jA\sqrt{s}\rb^{-1}\rb$ for $j\in\NN$ and $q^jA\sqrt{s}>1$.
Then $a=A\sqrt{t}$, $b=B\sqrt{t}$, $c=q^jAs/\sqrt{t}>0$ and $d=1/\lb q^jA\sqrt{t}\rb\in(0,1)$. We have $c/a=q^js/t$, $d/a=1/(q^jA^2t)$, $a/c=t/(q^js)$ and $d/c=1/(q^{2j}A^2s)$. If $\e=c$ then $q^kc=q^{k+j}As/\sqrt{t}\geq 1$, and hence $d/c\leq q^{2k}s/t$.  Therefore Claim \ref{claim} holds.
    \item [Case 3.] Let $x=\frac{1}{2}\lb q^jC/\sqrt{s}+\lb q^jC/\sqrt{s}\rb^{-1}\rb$ for $j\in\NN$ and $q^jC/\sqrt{s}>1$. 
    Then $a=A\sqrt{t}$, $b=B\sqrt{t}$, $c=q^jC/\sqrt{t}>0$ and $d=s/(q^jC\sqrt{t})\in(0,1)$. We have $c/a=q^jC/(At)$, $d/a=s/(q^jACt)$, $a/c=At/(q^jC)$ and $d/c=s/(q^{2j}C^2)$. If $\e=c$ then $q^kc=q^{k+j}C/\sqrt{t}\geq 1$, and hence $d/c\leq q^{2k}s/t$.   Therefore Claim \ref{claim} holds.
\end{enumerate}

We conclude the proof of Claim \ref{claim}.
\end{proof}

We now begin to prove Claim \ref{claim:only need}. 

\begin{proof}[Proof of Claim \ref{claim:only need}]
The fact that $\e\geq1$ has been proven by Claim \ref{claim}. To prove the rest of the statements, we use Claim \ref{claim} to divide the proof into the following three cases:
\begin{enumerate}
    \item [Case 1.] When $\f\leq 0$, we have $|1-q^l\f/\e|\geq 1$. In this case, every element in $\mathcal{A}_{k}^{\eee,\fff}$ is lower bounded by $1$.
    \item [Case 2.] When we have
    $$\frac{\f}{\e}\in\left\{\frac{\sqrt{s}}{At},\frac{s}{t},\frac{t}{s},\frac{1}{A^2t},\frac{C}{At},\frac{At}{C}\right\}\times\{q^r:r\in\ZZ\},$$ 
    where we denote $A\times B:=\{ab:a\in A, b\in B\}$ for  $A,B\subset\RR$. 
    We factorize the numbers above as products of the parts that involve $A$ and $C$ and the other parts that involve $s$ and $t$ (which can be $1$):
    \begin{multline*}
        \frac{\sqrt{s}}{At}=\frac{1}{A}\times\frac{\sqrt{s}}{t},\quad\frac{s}{t}=1\times\frac{s}{t},\quad \frac{t}{s}=1\times\frac{t}{s},\quad\\\frac{1}{A^2t}=\frac{1}{A^2}\times\frac{1}{t},\quad\frac{C}{At}=\frac{C}{A}\times\frac{1}{t},\quad\frac{At}{C}=\frac{A}{C}\times t. \end{multline*}
    
    One can shrink the value of $\ep>0$ if necessary, to ensure that there exists an uniform $\k>0$ such that for each $x\in\{1/2,1,-1\}$ and $y=\pm 1$ and for any $1\leq s<t<1+\ep$      we have $|1-q^ius^xt^y|>\k$ when: \textbf{(1)}   $i\in\ZZ\setminus\{0\}$ and $u=1$  or \textbf{(2)} $i\in\ZZ$ and
    $$u\in\left\{\frac{1}{A},\frac{1}{A^2},\frac{C}{A},\frac{A}{C}\right\}\setminus\{q^j:j\in\ZZ\}.$$

    Therefore, in the cases when 
    $$\frac{\f}{\e}\in \left\{\frac{s}{t},\frac{t}{s}\right\}\times\{q^r:r\in\ZZ\}:$$
Denote $\frac{\f}{\e}\in  \left\{q^h\frac{s}{t},q^h\frac{t}{s}\right\}$, $h\in\ZZ$. Then if
$h+l\in\ZZ\setminus\{0\}$ we have $\left|1-q^l\frac{\f}{\e}\right|\geq\k$. If $h+l=0$,

$$
    \left|1- q^l\frac{\f}{\e}\right|  
     \geq\min\lb \left|1- \frac{s}{t}\right|,  \left|1- \frac{t}{s}\right|\rb \geq\const\times(t-s).
$$

    By a similar argument, in the other cases when
    $$\frac{\f}{\e}\in\left\{\frac{\sqrt{s}}{At},\frac{1}{A^2t},\frac{C}{At},\frac{At}{C}\right\}\times\{q^r:r\in\ZZ\},$$
 then $\left|1-q^l\frac{\f}{\e}\right|$ can be lower bounded by $\const\times(t-s)$ if 
$$q^{-l}\in\left\{\frac{1}{A},\frac{1}{A^2},\frac{C}{A},\frac{A}{C}\right\}\quad\text{and}\quad \frac{\f}{\e}= \left\{\frac{\sqrt{s}}{t},\frac{1}{t},t\right\}\times q^{-l};$$ and for  all other  $l\in\ZZ$, $\left|1-q^l\frac{\f}{\e}\right|$ can be lower bounded by $\k$. The above fact follows from  
$$\min\lb \left|1-\frac{\sqrt{s}}{t}\right|,\left|1-\frac{1}{t}\right|,|1-t|\rb\geq  \const\times(t-s),$$
and from our choices of $\ep>0$ and $\k>0$.

    \item [Case 3.] When $\e=c$, $\f=d$ and $\f/\e\leq q^{2k}s/t$. In this case $q^l\f/\e  \leq q^{2k+l}s/t<q^{2k+l}$. If either $k\geq 1$ or $k=0$ and $l\geq 1$, then $\f/\e<q$ and $|1-\f/\e|>1-q$. If $k=l=0$ then $\f/\e\leq s/t$ and
    $$ \left|1-\frac{\f}{\e}\right|\geq1- \frac{s}{t}\geq\const\times(t-s).$$
\end{enumerate}
In view of Case 1, Case 2 and Case 3 above, we conclude the proof of Claim \ref{claim:only need}.
\end{proof}

We return to the proof. By the reasoning below Claim \ref{claim:only need}, we conclude the proof of \eqref{eq:only need appendix B}.  
 It is clear from our choice of $\ep>0$ that it is independent of $D$.
Our last goal is to show that the  constant $K$ can also be chosen as not depending on $D$. Since $D\in(-1,0]$, $\max(|a|,|b|,|c|,|d|)$ can be bounded by a constant not depending on $D$. Therefore the total number of atoms $\card\lb F(a,b,c,d)\rb$ as well as the constant prefactors appearing in the atom mass bounds \eqref{eq:in proof bound atom masses} can be bounded by  constants independent of $D$. In the proof of Claim \ref{claim:only need} (which bound all the linear factors $|1-q^l\f/\e|$), $D$ does not play a role at all. Therefore the uniform constants chosen from Claim \ref{claim:only need} does not depend on $D$.
 In summary,  constant $K$ can be chosen as independent of $D$. We conclude the proof.
\end{proof}
 
\begin{proposition}\label{prop:bound on pi1}
    Assume $A,C\geq0$, $\max(A,C)>1$, $-1<B,D\leq 0$, $q\in[0,1)$ and $|(ABCD)_{\infty}|\leq1$. Assume also  $A/C\notin\{q^l:l\in\ZZ\}$ if $A,C\geq 1$. Then there exists a positive constant $L$ depending on $A,C$ and $q$ such that $$\lnor\pi_1\rnor\leq \frac{L}{1-BD}\lv\pi_1\lb\{y_0(1)\}\rb\rv.$$
\end{proposition}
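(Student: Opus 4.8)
The plan is to use Lemma \ref{lem:bound TV}, which bounds $\lnor\pi_1\rnor$ by $1 + 2\card(F)\cdot\sup|p_j^{\eee}|$ over all atom masses of $\pi_1 = \nu(\d y; A, B, C, D)$, and then to show that both the number of atoms and all the atom masses are controlled by $\frac{1}{1-BD}|p_0^{\eee_0}|$, where $\eee_0$ is the parameter generating the largest atom $y_0(1)$. First I would identify $y_0(1)$: since $\max(A,C)>1$ and $B,D \in (-1,0]$, the largest atom is generated by $\max(A,C)$ (it is $\frac12(A + A^{-1})$ if $A \geq C$ and $A\geq 1$, and $\frac12(C+C^{-1})$ if $C > A$ and $C \geq 1$). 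The number of atoms $\card(F(A,B,C,D))$ is bounded by a constant depending only on $A, C, q$, since $B, D$ cannot generate atoms and the ranges of possible $k$ with $|A q^k|\geq 1$ or $|Cq^k|\geq 1$ are bounded in terms of $A, C, q$.

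The core of the argument is to compare a generic atom mass $|p_k^{\eee}(A,B,C,D)|$ with $|p_0^{\eee_0}(A,B,C,D)| = |\pi_1(\{y_0(1)\})|$. Using the formulas \eqref{eq: p_0} and \eqref{eq: p_j}, every atom mass has the form of a numerator consisting of $q$-Pochhammer products involving the pairwise products $AB, AC, AD, BC, BD, CD$ (and powers $A^2, C^2$), divided by a denominator of $q$-Pochhammer products of ratios $B/A, C/A, D/A$ etc.\ and the factor $(ABCD)_\infty$. The key observations are: (i) $(ABCD)_\infty$ appears in the denominator of every atom mass and is uniformly bounded away from $0$ since $ABCD = (AC)(BD) \geq 0$ and in fact $|(ABCD)_\infty|\leq 1$ is assumed --- actually one needs a \emph{lower} bound on $|(ABCD)_\infty|$, which follows because $ABCD \geq 0$ forces $(ABCD)_\infty = \prod_j(1 - q^j ABCD) \in (0, 1]$ away from $1$... here I would need the hypothesis $q^l ABCD \neq 1$ (implicit, as in Theorem \ref{thm:characterization}) so that this is genuinely bounded below; (ii) the factors involving $B$ and $D$ in the numerators, namely $(BC)_\infty, (BD)_\infty, (CD)_\infty$ (or $(bc, bd, cd)$-type terms), are all of the form $\prod_j (1 - q^j \cdot (\text{nonnegative number} < 1))$, hence lie in a bounded range $[\const, 1]$ \emph{except} possibly $(BD)_\infty$, which contributes a factor comparable to $(1-BD)$ when we take the worst case (actually $(BD)_\infty \geq (1 - BD)\cdot\const$ since all later factors $1 - q^j BD$ with $j \geq 1$ are bounded below as $BD \in (-1,0]$ and $q^j BD$ is small); (iii) the remaining factors that distinguish $p_k^{\eee}$ from $p_0^{\eee_0}$ --- powers of $q$, the $q$-Pochhammer symbols $(q)_k$, $(1 - \e^2)$, $\e^{4k}$, and the denominators $\prod_{l=1}^k (B/\e - q^l)(C\text{ or }A\text{-ratio} - q^l)(D/\e - q^l)$ --- are all bounded above and below by constants depending only on $A, C, q$, because the relevant $\e$ satisfies $|\e q^k| \geq 1$ with $k$ bounded, and the ratios $B/\e, D/\e$ lie in a bounded region while the ratios $A/C, C/A$ etc.\ avoid $\{q^l\}$ by hypothesis (exactly the situation analyzed in Claim \ref{claim} and Claim \ref{claim:only need} in the previous proof).

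Putting these together, every atom mass satisfies $|p_k^{\eee}(A,B,C,D)| \leq \const(A,C,q) \cdot \frac{1}{1-BD}$, while the largest atom mass satisfies $|\pi_1(\{y_0(1)\})| = |p_0^{\eee_0}(A,B,C,D)| \geq \const(A,C,q) \cdot \frac{1}{1-BD}$ --- the lower bound coming from formula \eqref{eq: p_0}, since its numerator contains the factor $(BD)_\infty \geq \const\cdot(1-BD)$ (the dominant factor as $BD \to -1$ would actually make this small, so I need to be careful: $(BD)_\infty$ with $BD\in(-1,0]$ is bounded \emph{below} by a positive constant since $1 - q^j BD \geq 1$ for all $j$; hence in fact $|p_0^{\eee_0}|\geq \const$ uniformly). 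Combining the upper bound on a generic atom mass (which carries the $\frac{1}{1-BD}$) with this uniform lower bound on $|p_0^{\eee_0}|$, and with $1 \leq \const\cdot|p_0^{\eee_0}|\cdot\frac{1}{1-BD}$ trivially since $1 - BD \geq 1$, Lemma \ref{lem:bound TV} yields $\lnor\pi_1\rnor \leq \frac{L}{1-BD}|\pi_1(\{y_0(1)\})|$ for suitable $L = L(A,C,q)$.

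The main obstacle I anticipate is the careful bookkeeping in step (iii): verifying that \emph{all} the denominator factors $\prod_{l=1}^k(B/\e - q^l)(\cdot - q^l)(D/\e - q^l)$ and the cross-ratios appearing in $p_k^{\eee}/p_0^{\eee}$ are bounded away from zero uniformly, which requires reusing the case analysis of Claim \ref{claim} (the three cases $\e \in \{$generated by $A\sqrt{s}$, by $C/\sqrt{s}$, or $x\in[-1,1]\}$ specialized to $s = t = 1$) to enumerate all possible values of the ratios and confirm the only way a factor $1 - q^l \f/\e$ can be small is precisely excluded by the hypothesis $A/C \notin \{q^l\}$ (when $A,C\geq 1$) --- and when $\max(A,C)>1$ but the other is $<1$, that parameter generates no atoms so no dangerous ratio arises. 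I would isolate this as a sublemma, essentially a corollary of Claim \ref{claim:only need} at $s=t=1$ (where the factor $(t-s)$ disappears and every linear factor is instead bounded below by a constant), so the bound becomes a clean constant rather than something blowing up. The dependence on $(1-BD)$ is then seen to enter through exactly one place, the comparison of the $(BD)_\infty$-type factor, which is what the proposition records.
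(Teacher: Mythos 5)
Your overall strategy --- invoking Lemma \ref{lem:bound TV} and then bounding the top atom mass $|\pi_1(\{y_0(1)\})|$ from below and all other atom masses from above via the explicit formulas \eqref{eq: p_0}--\eqref{eq: p_j} --- is exactly the paper's, and your peripheral points (counting atoms, controlling the ratios $B/\e,D/\e,A/C,C/A$ away from zeros of the Pochhammer symbols using $A/C\notin\{q^l\}$) are fine. But the two central quantitative claims are wrong, starting from a sign error: since $B,D\in(-1,0]$, the product $BD$ lies in $[0,1)$, not in $(-1,0]$ as you repeatedly assert. Hence $(BD)_\infty=\prod_j(1-q^jBD)$ is \emph{not} bounded below by a positive constant --- it is comparable to $1-BD$ and tends to $0$ as $B,D\to-1$ --- so your parenthetical conclusion that $|p_0^{\eee_0}|\geq\const$ uniformly is false, as is the step ``$1-BD\geq1$'' (in fact $1-BD\leq1$). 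The factor $1/(1-BD)$ in the statement exists precisely because the numerator of $p_0^{\eee_0}$ carries $(BD)_\infty\geq\const\cdot(1-BD)$, which degenerates; the correct lower bound is $|\pi_1(\{y_0(1)\})|\geq P(1-BD)/|(ABCD)_{\infty}|$ with $P=P(A,C,q)$.

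The second, more structural problem is your item (i): you seek a lower bound on $|(ABCD)_{\infty}|$. No such bound depending only on $A,C,q$ exists under the stated hypotheses --- the proposition deliberately does \emph{not} assume $q^lABCD\neq1$, and it is applied in Step 3 of the proof of Theorem \ref{thm:modulo particle hole} along a sequence $D_k\to D$ with $(ABCD_k)_{\infty}\to0$, where both $\lnor\pi_1\rnor$ and $|\pi_1(\{y_0(1)\})|$ blow up like $1/|(ABCD_k)_{\infty}|$. The resolution is that $(ABCD)_{\infty}$ sits in the denominator of \emph{every} atom mass and therefore cancels in the ratio: one shows $|p_k^{\eee}|\leq Q/|(ABCD)_{\infty}|$ (numerator factors, including $(BD)_\infty\leq1$, bounded above by constants) and divides by the lower bound above, giving $\sup|p_k^{\eee}|\leq \frac{Q}{P(1-BD)}|\pi_1(\{y_0(1)\})|$. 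The hypothesis $|(ABCD)_{\infty}|\leq1$ --- an upper bound, the opposite of what you ask for --- is used only to absorb the additive $1$ from Lemma \ref{lem:bound TV}, via $1\leq|\pi_1(\{y_0(1)\})|/(P(1-BD))$. Relatedly, your claimed upper bound $|p_k^{\eee}|\leq\const(A,C,q)/(1-BD)$ is false (fix $B,D$ away from $-1$ and let $q^lABCD\to1$), and it is unclear where its $1/(1-BD)$ would come from; the $1/(1-BD)$ enters the proposition only through the lower bound on the top atom, not through the upper bounds.
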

\begin{proof}
    We will prove the result for the high density phase $A>1$, $A>C$. The result for the low density phase follows from symmetry. The atoms are generated by $A$ and possibly also by $C$. 
    In view of Lemma \ref{lem:bound TV} 
   we want to bound the mass $\lv\pi_1\lb\{y_0(1)\}\rb\rv$ of first atom from below and masses of all other atoms from above. 
    We have:  
     $$\pi_1\lb\{y_0(1)\}\rb=p_0^{\aa}(A,B,C,D)=\frac{(A^{-2},BC,BD,CD)_\infty}{(B/A,C/A,D/A,ABCD)_\infty}. $$
    Notice that on $x\in(-\infty,0]$, $(x)_{\infty}\geq1$ is a decreasing function. Using the assumption $-1<B,D\leq 0$, one can lower bound this atom mass:
    \be\label{eq:lower}|\pi_1\lb\{y_0(1)\}\rb|\geq P \frac{1-BD}{|(ABCD)_{\infty}|},\ee
    where $P$ is a positive constant depending only on $A,C$ and $q$. 
    
    We then look at the masses of other (possible) atoms generated by $\aa$: For $k\geq1$ and $Aq^k\geq1$,
    \begin{multline*}
        p_k^\aa(A,B,C,D) \\
        =\frac{(A^{-2},BC,BD,CD)_\infty}{(B/A,C/A,D/A,ABCD)_\infty}
    \frac{q^k(1-A^2q^{2k})(A^2,AB,AC,AD)_k}{(q)_k(1-A^2)A^{4k}\prod_{l=1}^{k}\lb(B/A-q^l)(C/A-q^l)(D/A-q^l)\rb},
    \end{multline*} 
    along with the masses of the (possible) atoms generated by $\cc$: For $k\geq0$ and $Cq^k\geq1$,
    \begin{multline*}
        p_k^\cc(A,B,C,D) \\
        =\frac{(C^{-2},AB,BD,AD)_\infty}{(B/C,A/C,D/C,ABCD)_\infty}
    \frac{q^k(1-C^2q^{2k})(C^2,BC,AC,CD)_k}{(q)_k(1-C^2)C^{4k}\prod_{l=1}^{k}\lb(B/C-q^l)(A/C-q^l)(D/C-q^l)\rb}.
    \end{multline*} 
    Again, using the assumption $-1<B,D\leq 0$, one can upper bound these atom masses:
    \be\label{eq:upper}|p_k^{\eee}(A,B,C,D)|\leq\frac{Q }{|(ABCD)_{\infty}|},\ee
    for any atom $y_k^{\eee}\in F(A,B,C,D)$,
    where $Q$ is a uniform positive constant depending   on $A,C$ and $q$.

In view of Lemma \ref{lem:bound TV}, the lower bound \eqref{eq:lower}, the upper bound \eqref{eq:upper} and our assumption that $|(ABCD)_{\infty}|\leq1$, we conclude the proof.\end{proof}

\section{Time-reversal of Askey--Wilson signed measures}
\label{sec:time reversal}

We prove a special symmetry of multi-dimensional Askey--Wilson signed measures known as the time-reversal. 
In the case that this symmetry only involves actual probability measures (i.e., on the fan region $AC<1$), this symmetry appears as \cite[equation (5.10)]{bryc2017asymmetric}. However the proof is not explained clearly enough therein. Here we adopt a different approach to prove the result for the general case.
\begin{proposition}\label{prop:time reversal}
  Assume $q\in[0,1)$ and $A,B,C,D\in\RR$ satisfying $q^lABCD\neq1$ for all $l\in\NN$.    We have:
    \be 
    \pi_{t_1,\dots,t_m}^{(A,B,C,D)}\lb\d x_1,\dots,\d x_m\rb=
    \pi_{1/t_m,\dots,1/t_1}^{(C,D,A,B)}\lb\d x_m,\dots,\d x_1\rb
    \ee
    for any $0<t_1\leq\dots\leq t_m$ for which the multi-dimensional Askey--Wilson signed measures on both sides of this identity are well-defined. 
\end{proposition}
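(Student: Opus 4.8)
\emph{Strategy.} The plan is to reduce the $m$-variable identity to its $m=2$ instance by a formal ``peeling'' induction, and then to establish the $m=2$ case---which is a reversibility statement for the Askey--Wilson kernels---by a direct computation with the explicit weight and atom formulas. The base case $m=1$ asserts that $\nu(\d x;A\sqrt{t_1},B\sqrt{t_1},C/\sqrt{t_1},D/\sqrt{t_1})=\nu(\d x;C/\sqrt{t_1},D/\sqrt{t_1},A\sqrt{t_1},B\sqrt{t_1})$, which is immediate since the continuous density \eqref{eq:continuous part density} and the set of atoms together with the masses \eqref{eq: p_0}--\eqref{eq: p_j} are all invariant under permutations of $(a,b,c,d)$. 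For the inductive step I would write the left side of the $m$-variable identity as $\pi_{t_1,\dots,t_{m-1}}^{(A,B,C,D)}(\d x_1,\dots,\d x_{m-1})\,P^{(A,B)}_{t_{m-1},t_m}(x_{m-1},\d x_m)$, apply the inductive hypothesis to the first factor, expand the resulting $(C,D,A,B)$-chain, and use the $m=1$ symmetry to identify $\pi_{1/t_{m-1}}^{(C,D,A,B)}$ with $\pi_{t_{m-1}}^{(A,B,C,D)}$; after regrouping, the expression matches $\pi_{1/t_m,\dots,1/t_1}^{(C,D,A,B)}$ provided only the local identity
\[
\pi_{s}^{(A,B,C,D)}(\d x)\,P^{(A,B)}_{s,t}(x,\d y)=\pi_{1/t}^{(C,D,A,B)}(\d y)\,P^{(C,D)}_{1/t,1/s}(y,\d x),\qquad 0<s\leq t,
\]
holds, and this is exactly the $m=2$ case (one checks along the way that the well-definedness hypotheses propagate to all the intermediate objects).

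\emph{The $m=2$ identity.} Both sides are compactly supported finite signed measures on $\RR^2$, so it suffices to match them directly. Unwinding \eqref{eq:marginal AW signed}--\eqref{eq:transition AW signed}, each side is a product of two one-dimensional Askey--Wilson weights, and I would decompose each into four pieces according to whether $x$ (respectively $y$) lies in the open continuous range $(-1,1)$ or is an atom, and compare piece by piece. For the continuous$\times$continuous piece, write $x=\cos\phi$ and $y=\cos\psi$: in the left side the factors $(A\sqrt{s}\,e^{\pm i\phi})_\infty$, $(B\sqrt{s}\,e^{\pm i\phi})_\infty$ in the denominator of the first weight cancel against the same factors in the numerator of the second, the factor $(ABs)_\infty$ cancels likewise, and $\sqrt{s/t}$ produces the symmetric pair $(\sqrt{s/t}\,e^{\pm i(\phi\pm\psi)})_\infty$; performing these cancellations on both sides reduces the two densities to one and the same explicit expression---up to the common factor $(2\pi)^{-2}(1-x^2)^{-1/2}(1-y^2)^{-1/2}|(e^{2i\phi})_\infty|^2|(e^{2i\psi})_\infty|^2$, the ratio $(q)_\infty^2(AC,AD,BC,BD,CD/s,ABt,s/t)_\infty/(ABCD)_\infty$ divided by a $(\phi\leftrightarrow\psi)$-symmetric product of $q$-Pochhammer factors of the form $(\zeta e^{\pm i\theta})_\infty$ over the surviving parameters $\zeta$---which is visibly invariant under the swap $(A,B,s)\leftrightarrow(C,D,1/t)$ together with $\phi\leftrightarrow\psi$.

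\emph{Atomic parts and the main obstacle.} The real work lies in the three pieces involving atoms. Since $-1<B,D\leq 0$, a one-dimensional factor can only carry atoms generated by $A\sqrt{\cdot}$, by $C/\sqrt{\cdot}$, or---in a transition kernel---by the $x$-dependent parameter $\sqrt{s/t}(x\pm\sqrt{x^2-1})$ when $x$ is a large enough atom, and deciding which of these actually occurs amounts to exactly the case analysis carried out in the proof of Theorem \ref{thm:characterization} and Proposition \ref{prop:TV bound of AW signed measures}. One then has to enumerate, on each side, the towers of atoms that $\pi_s(\d x)\,P_{s,t}(x,\d y)$ carries (the atoms in $x$ and, for each such atom, the atoms it induces in $y$, generated both by the fixed parameter $A\sqrt t$ and by the $x$-dependent ones), and to verify, using \eqref{eq: p_0}--\eqref{eq: p_j}, that the atom locations and the corresponding masses agree with those on the reversed side. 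This bookkeeping---of the same $q$-Pochhammer-manipulation flavor as Appendix \ref{sec:total variation bounds}---is where I expect the difficulty to concentrate. An alternative that bypasses much of it is to test both sides of the $m=2$ identity against the determining family of generating-function kernels $x\mapsto 1/(uz,u/z;q)_\infty$, $z=x+\sqrt{x^2-1}$, $|u|$ small, and to invoke a closed-form (Nassrallah--Rahman-type) evaluation of the resulting Askey--Wilson integrals; the identity would then follow from the manifest symmetry of an explicit $q$-series, the only remaining task being to pin down that evaluation and its range of validity.
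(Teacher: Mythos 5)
Your overall architecture — the $m=1$ base case via permutation symmetry of the parameters $(a,b,c,d)$, and the reduction of general $m$ to the two-point ``local'' identity $\pi_{s}^{(A,B,C,D)}(\d x)\,P^{(A,B)}_{s,t}(x,\d y)=\pi_{1/t}^{(C,D,A,B)}(\d y)\,P^{(C,D)}_{1/t,1/s}(y,\d x)$ by peeling off one kernel at a time — is exactly the structure of the paper's proof. The problem is that the $m=2$ identity, which is the entire substance of the proposition, is not actually established in your write-up. You verify (in outline) only the continuous$\times$continuous piece of the decomposition, and you yourself flag the three atomic pieces as ``the real work'' and ``where I expect the difficulty to concentrate'' without carrying out that bookkeeping; the alternative route via generating-function kernels and a Nassrallah--Rahman evaluation is likewise left with its key evaluation ``to pin down.'' As it stands this is a plan for a proof of the $m=2$ case, not a proof. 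A secondary issue: your atom analysis assumes $-1<B,D\leq 0$ (the open ASEP specialization), whereas the proposition is stated for general real $A,B,C,D$ with $q^lABCD\neq 1$, so even a completed version of your case analysis would prove less than what is claimed.

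The paper avoids the atomic bookkeeping entirely by working with moments rather than densities. One integrates both sides of the two-point identity against $w_j(y;A\sqrt{t},B\sqrt{t},C/\sqrt{t},D/\sqrt{t})\,w_k(x;A\sqrt{s},B\sqrt{s},C/\sqrt{s},D/\sqrt{s})$; the projection formula $\int_{\RR} p_j(y;t)\,P_{s,t}(x,\d y)=p_j(x;s)$ collapses the inner integral, orthogonality then evaluates the outer one, and the resulting closed-form expression
\[
\delta_{jk}\lb\tfrac{s}{t}\rb^{j/2}(ABt)_j\lb\tfrac{CD}{s}\rb_j\,\frac{(1-q^{j-1}ABCD)(q,AC,AD,BC,BD)_j}{(1-q^{2j-1}ABCD)(ABCD)_j}
\]
is manifestly invariant under $(A,B,s)\leftrightarrow(C,D,1/t)$ together with $j\leftrightarrow k$. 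Since $w_j$ has degree $j$, this pins down all mixed moments, and Stone--Weierstrass plus the Riesz representation theorem identify the two compactly supported signed measures. I would recommend replacing your piece-by-piece matching with this argument: it handles atoms and continuous parts uniformly, works for general real parameters, and turns the ``main obstacle'' you identified into a two-line symmetry check.
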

\begin{remark}
    In view of the definition \eqref{eq:definition of multi-time} of $\pi_{t_1,\dots,t_m}^{(A,B,C,D)}\lb\d x_1,\dots,\d x_m\rb$, it is well-defined when $\pi^{(A,B,C,D)}_{t_1}(\d y)$ and each $P^{(A,B)}_{t_{i-1},t_i}(x,\d y)$, $i=2,\dots,m$ for any $x\in U_{t_{i-1}}$ are well-defined Askey--Wilson signed measures, i.e. their quadruples of entries lie in the subset $\Omega\subset\CC^4$ in Definition \ref{def:AW}.
\end{remark}

Before we begin the proof of Proposition \ref{prop:time reversal}, we recall  the orthogonality and the projection formula of Askey--Wilson polynomials and signed measures. The Askey--Wilson polynomials $w_j(x):=w_j(x;a,b,c,d)$, $j\in \NN$ are   defined by three term recurrence  in \cite{askey1985some} (see also \cite[Section 2.1]{wang2023askey}).  
\begin{lemma}[Corollary  2.6 and Theorem 2.7  in \cite{wang2023askey}]\label{lem:orthogonality}
    Assume $a,b,c,d\in\Omega$. Then for any $j,k\in\NN$,
\be \label{eq:orthogonality on R AW}
\int_\mathbb{R}\nu(\d x;a,b,c,d)w_j(x)w_k(x)=\delta_{jk}\frac{(1-q^{j-1}abcd)(q,ab,ac,ad,bc,bd,cd)_j}{(1-q^{2j-1}abcd)(abcd)_j}.
\ee 
\end{lemma}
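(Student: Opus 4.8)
The plan is to reduce the orthogonality to two ingredients and then establish the key one by analytic continuation. First I would record the three-term recurrence defining the Askey--Wilson polynomials, $x\,w_n(x)=a_n w_{n+1}(x)+b_n w_n(x)+c_n w_{n-1}(x)$ (from \cite{askey1985some}), noting that the coefficients $a_n$ and $c_n$ are nonzero for $(a,b,c,d)\in\Omega$. Writing $\mathcal{L}[p]:=\int_{\RR}p(x)\,\nu(\d x;a,b,c,d)$ for the associated linear functional, the one structural fact I would use repeatedly is that multiplication by $x$ is self-adjoint for $\mathcal{L}$, i.e. $\mathcal{L}[(xp)q]=\mathcal{L}[p(xq)]$ for all polynomials $p,q$, which holds trivially for any (signed) measure. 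Granting the claim that $\mathcal{L}[w_n]=0$ for all $n\geq 1$ (and $\mathcal{L}[w_0]=\mathcal{L}[1]=1$ by the total-mass remark), the off-diagonal vanishing $\mathcal{L}[w_j w_k]=0$ for $j\neq k$ follows by induction on $m:=\min(j,k)$; taking $m=j\leq k=n$ with $n>m$, I would apply self-adjointness to $\mathcal{L}[(x w_{m-1})w_n]=\mathcal{L}[w_{m-1}(x w_n)]$, expand both sides via the recurrence, and use the inductive hypothesis together with $a_{m-1}\neq 0$ to conclude $\mathcal{L}[w_m w_n]=0$. The base case $m=0$ is immediate since $w_0$ is constant.

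Next I would extract the explicit norm $h_n:=\mathcal{L}[w_n^2]$. Computing $\mathcal{L}[x\,w_{n-1}w_n]$ in two ways using the recurrence and the orthogonality just proved gives $a_{n-1}h_n=c_n h_{n-1}$, hence $h_n=(c_n/a_{n-1})h_{n-1}$; iterating from $h_0=\mathcal{L}[1]=1$ and substituting the known Askey--Wilson recurrence coefficients $a_n,c_n$ telescopes to exactly the right-hand side $\frac{(1-q^{n-1}abcd)(q,ab,ac,ad,bc,bd,cd)_n}{(1-q^{2n-1}abcd)(abcd)_n}$. This step is routine once the coefficients are inserted.

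The crux is therefore the claim $\mathcal{L}[w_n]=\int w_n\,\nu(\d x;a,b,c,d)=0$ for $n\geq 1$, which I would prove by analytic continuation in the parameters $(a,b,c,d)$. On the open region where $a,b$ are real, $c,d$ are a conjugate pair, and all four have modulus $<1$, the signed measure $\nu$ has no atoms and coincides with the classical Askey--Wilson orthogonality measure, for which $\int w_n\,\d\nu=0$ for $n\geq1$ is the classical result \cite{askey1985some}. For fixed $n$ the quantity $\int w_n\,\d\nu$ is a ratio of finite products of $q$-Pochhammer symbols in the parameters, so it extends meromorphically; the essential point is that it extends \emph{analytically} across each wall $|\e|=1$ at which a new atom of $\nu$ is born. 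The main obstacle is exactly this bookkeeping: via the standard contour-integral (residue) representation of the Askey--Wilson integral, one must verify that as a parameter $\e$ crosses the unit circle the mass $p_k^{\e}$ prescribed in Definition \ref{def:AW} equals the residue of the continuous-part integrand that crosses the contour, so that the total ``continuous integral plus atoms'' is the genuine analytic continuation of the continuous-part integral alone. The atom-mass formulas \eqref{eq: p_0}--\eqref{eq: p_j} are designed precisely to make this matching hold. Once analyticity across the walls is secured, the vanishing of $\int w_n\,\d\nu$ propagates from the all-inside-the-disk region to the connected parameter domain $\Omega$ by the identity theorem, which together with the two paragraphs above completes the proof.
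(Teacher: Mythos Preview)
The paper does not prove this lemma; it is quoted directly from \cite{wang2023askey} (Corollary~2.6 and Theorem~2.7 there), with the remark that the result is originally due to \cite{askey1985some,bryc2010askey} and extended to signed measures in \cite{wang2023askey}. So there is no in-paper argument to compare against.

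Your sketch is essentially the correct strategy and matches how the cited references proceed. The reduction of full orthogonality to the single claim $\mathcal{L}[w_n]=0$ for $n\geq 1$ via the three-term recurrence and self-adjointness of multiplication by $x$ is standard and correct, as is the telescoping computation of $h_n$ from $a_{n-1}h_n=c_n h_{n-1}$. Your identification of the analytic-continuation step as the crux is right, and the contour/residue mechanism you describe---atoms appearing precisely as residues of the integrand poles that cross the unit circle, so that the combined quantity remains analytic---is exactly the device used in \cite{wang2023askey}. One imprecision worth flagging: you assert that $\int w_n\,\d\nu$ is \emph{a priori} ``a ratio of finite products of $q$-Pochhammer symbols''; that is not known before orthogonality is established. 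What one actually uses is that each moment $\int x^k\,\d\nu$ is given by a contour integral and is therefore analytic in the parameters, hence $\int w_n\,\d\nu$ is analytic and vanishes on the open region $\max(|a|,|b|,|c|,|d|)<1$, so identically. You should also either check or cite that the relevant parameter domain is connected through the walls $|\e|=1$ after removing the codimension-one loci excluded by conditions \textbf{(2)} and \textbf{(3)} of Definition~\ref{def:AW}; this bookkeeping is done carefully in \cite{wang2023askey}.
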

\begin{lemma}[Proposition 3.3 in \cite{wang2023askey}]\label{lem:projection formula}
  Assume $A,B,C,D\in\RR$. For any $s\leq t$ such that $P_{s,t}(x,\d y)$ is well-defined for all $x\in U_s$, we have:
  \be\label{eq:projection formula}\int_\mathbb{R}p_j(y;t) P_{s,t}(x,\d y)=p_j(x;s),\ee
where for $j\in\NN$, $$p_j(x;t):=t^{j/2}(ABt)_j^{-1}w_j\lb x;A\sqrt{t},B\sqrt{t},C/\sqrt{t},D/\sqrt{t}\rb.$$  
\end{lemma}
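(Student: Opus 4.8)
The plan is to prove the projection formula \eqref{eq:projection formula} by a direct computation based on the explicit basic hypergeometric (${}_4\phi_3$) representation of the Askey--Wilson polynomials, combined with a shifted moment evaluation of the signed measure $P_{s,t}(x,\d y)$; this route is self-contained and does not require the connection-coefficient machinery that the orthogonality Lemma \ref{lem:orthogonality} would otherwise suggest. The first step is to record the structure of the measure. By \eqref{eq:transition AW signed} we have $P_{s,t}(x,\d y)=\nu(\d y;a,b,c,d)$ with $a=A\sqrt t$, $b=B\sqrt t$, $c=\sqrt{s/t}\lb x+\sqrt{x^2-1}\rb$ and $d=\sqrt{s/t}\lb x-\sqrt{x^2-1}\rb$, so that $cd=s/t$ and, writing $x=\cos\phi$, $c=\sqrt{s/t}\,e^{\i\phi}$ and $d=\sqrt{s/t}\,e^{-\i\phi}$. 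The key observation is that this measure and the polynomial $w_j\lb\cdot;A\sqrt t,B\sqrt t,C/\sqrt t,D/\sqrt t\rb$ inside $p_j(\cdot;t)$ share their first two parameters $A\sqrt t$ and $B\sqrt t$.

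The main tool I would establish is the shifted moment identity
\[
\int_{\RR}\lb ae^{\i\psi},ae^{-\i\psi}\rb_\ell\,\nu(\d y;a,b,c,d)=\frac{(ab,ac,ad)_\ell}{(abcd)_\ell},\qquad y=\cos\psi,\ \ell\in\NN.
\]
I would deduce this from the stronger identity of signed measures $\lb ae^{\i\psi},ae^{-\i\psi}\rb_\ell\,\nu(\d y;a,b,c,d)=\tfrac{(ab,ac,ad)_\ell}{(abcd)_\ell}\,\nu(\d y;aq^\ell,b,c,d)$ and then integrating, using that every Askey--Wilson signed measure has total mass $1$. On the continuous part this is elementary $q$-Pochhammer bookkeeping obtained by comparing the density \eqref{eq:continuous part density} at parameters $(a,b,c,d)$ and $(aq^\ell,b,c,d)$: the constant prefactors contribute the ratio $(ab,ac,ad)_\ell/(abcd)_\ell$, while the $\theta$-dependent factors differ exactly by $\lb ae^{\i\psi},ae^{-\i\psi}\rb_\ell$.

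With the moment identity in hand, I would insert the representation $w_j(\cos\psi;a,b,C/\sqrt t,D/\sqrt t)=\kappa_j\,{}_4\phi_3\lb q^{-j},q^{j-1}ABCD,ae^{\i\psi},ae^{-\i\psi};ABt,AC,AD;q,q\rb$, whose sole $y$-dependence lies in the factor $\lb ae^{\i\psi},ae^{-\i\psi}\rb_\ell$ of its $\ell$-th term, and integrate term by term against $\nu(\d y;a,b,c,d)$. The factor $(ABt)_\ell=(ab)_\ell$ cancels, and using $ac=A\sqrt s\,e^{\i\phi}$, $ad=A\sqrt s\,e^{-\i\phi}$ and $abcd=ABs$ the series collapses to $\kappa_j\,{}_4\phi_3\lb q^{-j},q^{j-1}ABCD,A\sqrt s\,e^{\i\phi},A\sqrt s\,e^{-\i\phi};ABs,AC,AD;q,q\rb$, which is exactly the ${}_4\phi_3$ representing $w_j\lb x;A\sqrt s,B\sqrt s,C/\sqrt s,D/\sqrt s\rb$. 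Writing $\kappa_j'$ for the analogous prefactor of the latter, the standard normalization gives $\kappa_j/\kappa_j'=(s/t)^{j/2}(ABt)_j/(ABs)_j$, so multiplying by the weight $t^{j/2}(ABt)_j^{-1}$ in the definition of $p_j(\cdot;t)$ produces precisely $p_j(x;s)=s^{j/2}(ABs)_j^{-1}w_j\lb x;A\sqrt s,B\sqrt s,C/\sqrt s,D/\sqrt s\rb$.

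I expect the main obstacle to be the atomic part of the signed measures in the moment identity. One must check that the factor $\lb ae^{\i\psi},ae^{-\i\psi}\rb_\ell$ annihilates exactly the atoms $y_k^{\aa}$ with $k<\ell$ (there $e^{\i\psi}=aq^k$, forcing the vanishing factor $(q^{-k};q)_\ell=0$), which are precisely the atoms lost when $a$ is shifted to $aq^\ell$, and that the masses of the surviving atoms $k\geq\ell$ transform with the same constant $(ab,ac,ad)_\ell/(abcd)_\ell$; this requires a careful comparison using the atom-mass formulas \eqref{eq: p_0}--\eqref{eq: p_j}. The only other routine points are the validity of the ${}_4\phi_3$ representation for the chosen normalization of $w_j$ and the finiteness justifying term-by-term integration, both of which are standard.
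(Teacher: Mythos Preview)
The paper does not prove this lemma: it is stated with a citation to \cite{wang2023askey} (Proposition~3.3), and the remark following it notes that the result is originally due to \cite{bryc2010askey} in the probability-measure setting and extended to signed measures in \cite{wang2023askey}. There is therefore no in-paper proof to compare against.

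Your proposed route is correct and is essentially the classical one. The identity of signed measures
\[
\lb ae^{\i\psi},ae^{-\i\psi}\rb_\ell\,\nu(\d y;a,b,c,d)=\frac{(ab,ac,ad)_\ell}{(abcd)_\ell}\,\nu(\d y;aq^\ell,b,c,d)
\]
is exactly the ingredient underlying the Bryc--Weso{\l}owski martingale/projection property, and combining it with the ${}_4\phi_3$ representation of $w_j$ (with first two parameters $A\sqrt t,B\sqrt t$ matching those of $P_{s,t}$) gives the desired collapse to $w_j\lb x;A\sqrt s,B\sqrt s,C/\sqrt s,D/\sqrt s\rb$ after the advertised cancellation of $(ABt)_\ell$ and the substitution $ac=A\sqrt s\,e^{\i\phi}$, $ad=A\sqrt s\,e^{-\i\phi}$, $abcd=ABs$. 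The normalization bookkeeping with $\kappa_j$ is routine once a convention for $w_j$ is fixed.

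Two small points to tighten. First, your atom discussion only treats the atoms generated by $\aa$; you should also record that atoms generated by $\bb,\cc,\dd$ (when present) transform with the same constant. For instance at $y_0^{\cc}$ one has $\lb ae^{\i\psi},ae^{-\i\psi}\rb_\ell=(ac,a/c)_\ell$, and comparing $p_0^{\cc}(a,b,c,d)$ with $p_0^{\cc}(aq^\ell,b,c,d)$ via \eqref{eq: p_0} produces the factor $(ab,ad)_\ell(a/c)_\ell^{-1}(abcd)_\ell^{-1}$, so the product is again $(ab,ac,ad)_\ell/(abcd)_\ell$; the higher $k$ and the $\bb,\dd$ atoms are analogous. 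Second, the well-definedness hypothesis on $P_{s,t}(x,\d y)$ only guarantees $(a,b,c,d)\in\Omega$; you implicitly use that $(aq^\ell,b,c,d)\in\Omega$ as well, which should be checked (it follows since shifting $a\mapsto aq^\ell$ can only remove atoms and the conditions in Definition~\ref{def:AW} are monotone in the right direction).
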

\begin{remark}
    We note that Lemma \ref{lem:orthogonality} and Lemma \ref{lem:projection formula} above are originally due to \cite{askey1985some,bryc2010askey} and later generalized by \cite{wang2023askey} to the cases of signed measures. 
\end{remark}

 We now start to prove Proposition \ref{prop:time reversal}.
\begin{proof}[Proof of Proposition \ref{prop:time reversal}]
    We first prove the result for $m=1$ and $m=2$, then we use induction to prove it for $m\geq 3$.

    For $m=1$, we recall that the Askey--Wilson signed measures are symmetric with respect to their entries. This fact in particular follows from the Askey--Wilson polynomials $w_j(x)=w_j(x;a,b,c,d)$ being symmetric with respect to parameters $a,b,c$ and $d$. Therefore we have:
    \begin{multline*}
        \pi_t^{(A,B,C,D)}(\d x)=\nu\lb \d x;A\sqrt{t},B\sqrt{t},\frac{C}{\sqrt{t}},\frac{D}{\sqrt{t}}\rb\\
    =\nu\lb \d x;\frac{C}{\sqrt{t}},\frac{D}{\sqrt{t}},A\sqrt{t},B\sqrt{t}\rb=\pi_{1/t}^{(C,D,A,B)}(\d x).
    \end{multline*} 
    
    For $m=2$, we compute, for $s<t$ and $j,k\in\mathbb{N}_0$:
    \begin{equation}\label{eq:big}
    \begin{split}
        &\int_{\R^2}\pi_{s,t}^{(A,B,C,D)}\lb\d x,\d y\rb w_j\lb y;A\sqrt{t},B\sqrt{t},\frac{C}{\sqrt{t}},\frac{D}{\sqrt{t}}\rb
        w_k\lb x;A\sqrt{s},B\sqrt{s},\frac{C}{\sqrt{s}},\frac{D}{\sqrt{s}}\rb\\
        &=\frac{(ABt)_j(ABs)_k}{t^{j/2}s^{k/2}}\int_{\R^2}\pi_{s,t}^{(A,B,C,D)}\lb\d x,\d y\rb p_j(y;t)p_k(x;s)\\
        &=\frac{(ABt)_j(ABs)_k}{t^{j/2}s^{k/2}}\int_{\R}\pi_s^{(A,B,C,D)}(\d x)p_k(x;s)\int_{\R}P^{(A,B)}_{s,t}(x,\d y)p_j(y;t)\\
        &=\frac{(ABt)_j(ABs)_k}{t^{j/2}s^{k/2}}\int_{\R}\pi_s^{(A,B,C,D)}(\d x)p_k(x;s) p_j(x;s)\\
        &=\frac{(ABt)_js^{j/2}}{(ABs)_jt^{j/2}}\int_{\R}\pi_s^{(A,B,C,D)}(\d x)w_j\lb x;A\sqrt{s},B\sqrt{s},\frac{C}{\sqrt{s}},\frac{D}{\sqrt{s}}\rb w_k\lb x;A\sqrt{s},B\sqrt{s},\frac{C}{\sqrt{s}},\frac{D}{\sqrt{s}}\rb\\
        &=\frac{(ABt)_js^{j/2}}{(ABs)_jt^{j/2}}\delta_{jk}\frac{(1-q^{j-1}ABCD)(q,ABs,AC,AD,BC,BD,CD/s)_j}{(1-q^{2j-1}ABCD)(ABCD)_j}\\
        &=\delta_{jk}\lb\frac{s}{t}\rb^{j/2}(ABt)_j\lb\frac{CD}{s}\rb_j\frac{(1-q^{j-1}ABCD)(q,AC,AD,BC,BD)_j}{(1-q^{2j-1}ABCD)(ABCD)_j}.
    \end{split}
    \end{equation} 
We have used the projection formula (Lemma \ref{lem:projection formula}) in the third step and the orthogonality property (Lemma \ref{lem:orthogonality}) in the second to last step. 

The RHS of the above identity \eqref{eq:big} remains the same when we swap $x\leftrightarrow y$, $j\leftrightarrow k$, $A\leftrightarrow C$, $B\leftrightarrow D$  and also take $s\mapsto 1/t$, $t\mapsto 1/s$. Therefore we have:  
    \begin{equation*}
        \begin{split}
            &\int_{\R^2}\pi_{s,t}^{(A,B,C,D)}\lb\d x,\d y\rb w_j\lb y;A\sqrt{t},B\sqrt{t},\frac{C}{\sqrt{t}},\frac{D}{\sqrt{t}}\rb w_k\lb x;A\sqrt{s},B\sqrt{s},\frac{C}{\sqrt{s}},\frac{D}{\sqrt{s}}\rb\\
        &=\delta_{jk}\lb\frac{s}{t}\rb^{j/2}(ABt)_j\lb\frac{CD}{s}\rb_j\frac{(1-q^{j-1}ABCD)(q,AC,AD,BC,BD)_j}{(1-q^{2j-1}ABCD)(ABCD)_j}\\
        &=\int_{\R^2}\pi_{1/t,1/s}^{(C,D,A,B)}\lb\d y,\d x\rb 
        w_k\lb x;\frac{C}{\sqrt{s}},\frac{D}{\sqrt{s}},A\sqrt{s},B\sqrt{s}\rb
        w_j\lb y;\frac{C}{\sqrt{t}},\frac{D}{\sqrt{t}},A\sqrt{t},B\sqrt{t}\rb  \\
        &=\int_{\R^2}\pi_{1/t,1/s}^{(C,D,A,B)}\lb\d y,\d x\rb w_j\lb y;A\sqrt{t},B\sqrt{t},\frac{C}{\sqrt{t}},\frac{D}{\sqrt{t}}\rb w_k\lb x;A\sqrt{s},B\sqrt{s},\frac{C}{\sqrt{s}},\frac{D}{\sqrt{s}}\rb.
        \end{split}
    \end{equation*}
We notice that $w_j(x)$ has degree $j$ for $j\in\NN$. One can use induction within the above identity to conclude that, for every $j,k\in\mathbb{N}_0$,
$$\int_{\R^2}\pi_{s,t}^{(A,B,C,D)}\lb\d x,\d y\rb x^ky^j=\int_{\R^2}\pi_{1/t,1/s}^{(C,D,A,B)}\lb\d y,\d x\rb x^ky^j.$$
We next choose a large enough closed ball $K\subset\RR^2$ whose interior contains the supports of both of the signed measures $\pi_{s,t}^{(A,B,C,D)}\lb\d x,\d y\rb$ and $\pi_{1/t,1/s}^{(C,D,A,B)}\lb\d y,\d x\rb$. 
By the Stone--Weierstrass theorem, any continuous function $g$ on  $K$ can be uniformly approximated by polynomials. Therefore for any continuous function $g$ on $\R^2$, we have:
\be\label{eq:moment proof}\int_{\R^2}\pi_{s,t}^{(A,B,C,D)}\lb\d x,\d y\rb g(x,y)=\int_{\R^2}\pi_{1/t,1/s}^{(C,D,A,B)}\lb\d y,\d x\rb g(x,y).\ee
Hence both of the signed measures  define the same bounded linear functional  $$C(K)\longrightarrow\RR.$$
In view of (the uniqueness part of) the Riesz representation theorem (see for example \cite[Theorem 6.19]{rudin1987real}), we have
$$\pi_{s,t}^{(A,B,C,D)}\lb\d x,\d y\rb=\pi_{1/t,1/s}^{(C,D,A,B)}\lb\d y,\d x\rb.$$

We conclude the proof  for $m=2$.

Assume that the result holds for some $m\geq 2$, we prove it for the case $m+1$. We have:
\begin{equation*}
    \begin{split}
        &\pi^{(A,B,C,D)}_{t_1,\dots,t_{m+1}}(\d x_1,\dots,\d x_{m+1})\\ 
        &=\pi^{(A,B,C,D)}_{t_1,\dots,t_m}(\d x_1,\dots,\d x_m)P^{(A,B)}_{t_m,t_{m+1}}(x_m,\d x_{m+1})\\
        &=\pi^{(C,D,A,B)}_{1/t_m,\dots,1/t_1}(\d x_m,\dots,\d x_1)P^{(A,B)}_{t_m,t_{m+1}}(x_m,\d x_{m+1})\\
        &=\pi^{(C,D,A,B)}_{1/t_m}(\d x_m)P^{(C,D)}_{1/t_m,1/t_{m-1}}(x_m,\d x_{m-1})\dots P^{(C,D)}_{1/t_2,1/t_1}(x_2,\d x_1) P^{(A,B)}_{t_m,t_{m+1}}(x_m,\d x_{m+1})\\
        &=\pi^{(A,B,C,D)}_{t_m}(\d x_m) P^{(A,B)}_{t_m,t_{m+1}}(x_m,\d x_{m+1}) P^{(C,D)}_{1/t_m,1/t_{m-1}}(x_m,\d x_{m-1})\dots P^{(C,D)}_{1/t_2,1/t_1}(x_2,\d x_1) \\
        &=\pi^{(A,B,C,D)}_{t_m,t_{m+1}}(\d x_m,\d x_{m+1}) P^{(C,D)}_{1/t_m,1/t_{m-1}}(x_m,\d x_{m-1})\dots P^{(C,D)}_{1/t_2,1/t_1}(x_2,\d x_1) \\
        &=\pi^{(C,D,A,B)}_{1/t_{m+1},1/t_m}(\d x_{m+1},\d x_m) P^{(C,D)}_{1/t_m,1/t_{m-1}}(x_m,\d x_{m-1})\dots P^{(C,D)}_{1/t_2,1/t_1}(x_2,\d x_1) \\
        &=\pi^{(C,D,A,B)}_{1/t_{m+1},\dots,1/t_1}(\d x_{m+1},\dots,\d x_1), 
    \end{split}
\end{equation*}
where we have used the result for $m$, $1$ and $2$.
By induction, we conclude the proof. 
\end{proof}

\end{appendix}

\end{document}